\theoremstyle{definition}
\newtheorem{definition}{Definition}
\newtheorem{example}{Example}
\theoremstyle{plain}
\newtheorem{theorem}{Theorem}
\newtheorem{corollary}{Corollary}
\newtheorem{lemma}{Lemma}
\newcommand{\ProblemName}{\textsc{RobMCF$\equiv$}}
\newcommand{\RestrictedProblemName}[1]{\textsc{rRobMCF$\equiv\hspace*{-0.1cm}(\boldsymbol{\balanceDynPro_{#1}}, \boldsymbol{\costDynPro_{#1}})$}}
\newcommand{\Demand}[4]{\delta(#2^{#3}_{#4})}
\DeclareMathOperator{\freehelp}{free}
\DeclareMathOperator{\fixhelp}{fix}
\newcommand{\free}{A^{\freehelp}}
\newcommand{\fix}{A^{\fixhelp}}
\newcommand{\balanceDynPro}{\tilde{s}}
\newcommand{\costDynPro}{\tilde{c}}
\newcommand{\sourceSPGraph}{o}
\newcommand{\sinkSPGraph}{q}
\definecolor{rwth-blue}{RGB}{0,84,159}
\newenvironment{lyxlist}[1]
{\begin{list}{}
		{\settowidth{\labelwidth}{#1}
			\setlength{\leftmargin}{\labelwidth}
			\addtolength{\leftmargin}{\labelsep}
			}}
	{\end{list}}
\def\BState{\State\hskip-\ALG@thistlm}
\let\@afterindenttrue\@afterindentfalse
\def\ps@pprintTitle{%
	\let\@oddhead\@empty
	\let\@evenhead\@empty
	\def\@oddfoot{}%
	\let\@evenfoot\@oddfoot}
\begin{document}
\begin{frontmatter}

\title{Robust Minimum Cost Flow Problem Under Consistent Flow Constraints}

\author{Christina B\"using}
\author{Arie M.C.A. Koster}
\author{Sabrina Schmitz\corref{mycorrespondingauthor}}

\ead{schmitz@math2.rwth-aachen.de}
\cortext[mycorrespondingauthor]{Corresponding author}
\address{Lehrstuhl II f\"ur Mathematik, RWTH Aachen University, Germany}

\begin{abstract}
The robust minimum cost flow problem under consistent flow constraints (\ProblemName{}) is a new extension of the minimum cost flow (MCF) problem.
In the \ProblemName{} problem, we consider demand and supply that are subject to uncertainty.
For all demand realizations, however, we require that the flow value on an arc needs to be equal if it is included in the predetermined arc set given.
The objective is to find feasible flows that satisfy the equal flow requirements while minimizing the maximum occurring cost among all demand realizations.

In the case of a discrete set of scenarios, we derive structural results which point out the differences with the polynomial time solvable MCF problem on networks with integral capacities.  
In particular, the Integral Flow Theorem of Dantzig and Fulkerson does not hold.
For this reason, we require integral flows in the entire paper. 
We show that the \ProblemName{} problem is strongly $\mathcal{NP}$-hard on acyclic digraphs by a reduction from the $(3,B2)$-\textsc{Sat} problem.
Further, we demonstrate that the \ProblemName{} problem is weakly $\mathcal{NP}$-hard on series-parallel digraphs by providing a reduction from \textsc{Partition} and a pseudo-polynomial algorithm based on dynamic programming.
Finally, we propose a special case on series-parallel digraphs for which we can solve the \ProblemName{} problem in polynomial time.
\end{abstract}

\begin{keyword}
Minimum Cost Flow Problem \sep Equal Flow Problem \sep Robust Flows \sep Series-Parallel Digraphs \sep Dynamic Programming
\end{keyword}
\end{frontmatter}

\section{Introduction}
\label{Sec:Introduction}
In this paper, we present a new extension of the minimum cost flow (MCF) problem~\citep{ahuja1988network}, which we call the \textit{robust minimum cost flow problem under consistent flow constraints} (\ProblemName{}). 
This problem is motivated by, for example, long-term decisions in logistic applications.
A major problem in logistics is the cost-efficient transport of commodities. 
Typically, this problem is represented by an MCF model, where a commodity can be identified by a flow sent through a network from a supply source to a sink with demand. 
In this way, a company can easily assess whether the available means of transport are sufficient for a given demand.
If this is not the case, additional transport by subcontractors can be arranged.
Such arrangements are generally agreed by long-term contracts, however, the demand is naturally subject to uncertainty. 
For this reason, valid and cost-efficient decisions have to be made without the knowledge of the actual demand.

The problem described can be represented by an adjusted integral MCF model subject to demand uncertainty.
We represent the demand uncertainty by a discrete number of possible occurring demand scenarios. 
In addition to the network requirements of the MCF problem, we are given a predetermined set of arcs referred to as fixed arcs. 
The flow value on the fixed arcs is supposed to represent the transport by subcontractors. 
Thus, we require the flow value on a fixed arc to be equal among all demand scenarios.
For finding a robust solution to this problem, we minimize the maximum cost that may occur among all demand scenarios.  
In summary, for the \ProblemName{} problem we consider different demand scenarios for which we require integral flows whose flow values are equal on the respective fixed arcs with the objective of minimizing the maximum cost.

The main contribution of this paper can be summarized as follows. 
We show that most of the knowledge of the MCF problem is not readily transferable to the \ProblemName{} problem. 
In particular, the integrality requirement of the \ProblemName{} problem is necessary, even though the network's capacities are integral, as Dantzig and Fulkerson's Integral Flow Theorem~\citep{korte2012combinatorial} does not hold. 
We further prove that the decision version of the \ProblemName{} problem is strongly $\mathcal{NP}$-complete on acyclic digraphs even if only two demand scenarios are considered. 
On series-parallel digraphs, we show that the decision version of the \ProblemName{} problem is weakly $\mathcal{NP}$-complete and solvable in pseudo-polynomial time by dynamic programming.
If in addition all demand scenarios have the same single source and sink, we propose an algorithm running in polynomial time. 
\\

The outline of this paper is as follows. 
We start with an overview about related work in Section~\ref{Sec:RelatedWork}. 
Subsequently, in Section~\ref{Sec:Problemdef}, we give an explicit mathematical problem description, and introduce the notations of this paper. 
Furthermore, we present first structural results of the problem. 
In Section~\ref{Subsec:ComplexityAcyclic}, the problem's complexity is analyzed on acyclic digraphs. 
Afterwards, we consider the \ProblemName{} problem on series-parallel digraphs in Section~\ref{Sec:SPDef}.
We conclude this paper by Section~\ref{Sec:Conclusion}.

\section{Related Work}
There are several related extensions to the MCF problem considered in the literature. 
In the following, we focus on extensions that consider equal flow requirements. 
Afterwards, we give a short overview of robust network approaches with demand uncertainty.
To the best of our knowledge, no study combines equal flow requirements with robust network approaches.

Sahni~(\citeyear{sahni1974computationally}) introduces a variant of the maximum flow problem~\citep{ahuja1988network}, the so-called \textit{integral flow with homologous arcs problem} (\textsc{homIF}). 
In addition to the set-up of the maximum flow problem, predetermined sets of arcs are given in this problem. 
A maximum integral flow is sought whose flow value is equal on all arcs that are contained in the same predetermined arc set. 
Sahni proves the $\mathcal{NP}$-hardness of the problem by a reduction from the \textsc{Non-Tautology} problem.
Garey and Johnson~(\citeyear{johnson1979computers}) point out that by modifying a construction of Even et al.~(\citeyear{even1975complexity}), the problem's $\mathcal{NP}$-hardness holds even if all arc capacities are equal to one. 
Furthermore, unless $\mathcal{P}=\mathcal{NP}$, the non-existence of a $2^{n(1-\varepsilon)}$-approximation algorithm for any fixed $\varepsilon>0$ (on a digraph with $n$ vertices) is proven by Meyers and Schulz~(\citeyear{meyers2009integer}) even if a nonzero solution is guaranteed to exist.

The MCF version of the \textsc{homIF} problem can be found in the literature as (integer) \textit{equal flow problem} (\textsc{EF}).
Using standard techniques, the complexity results can be transformed from the maximum flow to the MCF version~\citep{ahuja1988network}. 
There are several special cases and applications for both the maximum flow and MCF version of the problem considered in the literature~\citep{calvete2003network,meyers2009integer,morrison2013network,10.1007/3-540-45655-4_55}.  
For instance, the special case of the \textsc{EF} problem where all sets have cardinality two, i.e., an integral MCF is sought whose flow value is equal on a predetermined set of arc pairs, is investigated by Ali et al.~(\citeyear{ali1988equal}). 
The problem finds application in, for example, crew scheduling~\citep{carraresi1984network}.
Therefore, Ali et al.~present a heuristic algorithm based on Lagrangian relaxation.  
Meyers and Schulz~(\citeyear{meyers2009integer}) refer to this problem as \textit{paired integer equal flow problem} (\textsc{pEF}) and prove that there exists no $2^{n(1-\varepsilon)}$-approximation algorithm for any fixed $\varepsilon>0$ (on a digraph with $n$ vertices), unless $\mathcal{P}=\mathcal{NP}$. 
The statement holds true even if a nontrivial solution is guaranteed to exist.
A simpler and in polynomial time solvable special case of the \textsc{EF} problem is the so-called \textit{simple equal flow problem} (\textsc{sEF}), which is introduced by Ahuja et al.~(\citeyear{ahuja1999algorithms}). 
The \textsc{sEF} problem requires the same but not necessarily integral flow value on only a single predetermined set of arcs. 
The problem is motivated by the management of water resource systems~\citep{manca2010water}. 
For this purpose, Ahuja et al.~(\citeyear{ahuja1999algorithms}) develop several efficient algorithms to solve large-scale instances~--~a network simplex, a parametric simplex, a combinatorial parametric, a binary search, and a capacity
scaling algorithm.
These algorithms can easily be modified to obtain integral solutions.

Unlike the previous research on problems with equal flow requirements, we consider in the \ProblemName{} problem not one demand scenario only, but several demand scenarios.
For each of these scenarios, a feasible flow is sought. 
Furthermore, among all of these scenarios, we require the same flow value on an arc if it is included in the single predetermined set of fixed arcs.
Although we consider several demand scenarios, the problem of finding a feasible solution to the \ProblemName{} problem can be modeled as a special case of the \textsc{EF} and \textsc{pEF} problem by means of graph copies. 
We point out that the equal flow requirements in the \ProblemName{} problem are only of importance while considering different demand scenarios, i.e., the flow value of a fixed arc has to be equal among all scenarios.
In turn, the flow value of two fixed arcs may differ in one scenario.
For this reason, the problem of finding a feasible solution to the \ProblemName{} problem cannot be modeled as the \textsc{sEF} problem, except for the special case where the predetermined arc set contains only one arc. 
Moreover, due to different objectives, the correspondence from the \ProblemName{} problem to the \textsc{EF}, \textsc{sEF}, and \textsc{pEF} problem only holds for finding a feasible solution. 
\\

Demand uncertainty is studied more frequently in the context of network design and network engineering in telecommunication or road networks for example. 
In robust network design, we have to decide on the capacities such that in all considered scenarios, the entire demand can simultaneously be routed.
The cost of installing the capacities is supposed to be minimized. 

In the single commodity case which was first studied by Minoux~(\citeyear{Minoux1989}) and by Sanit\`{a}~\linebreak[3](\citeyear{Sanita2009}), the flow between supply and demand vertices may differ among the scenarios as long as the capacities are satisfied. For discrete scenarios, a cut-based integer linear program formulation with a separation algorithm is proposed by \'Alvarez-Miranda et al.~(\citeyear{AlvarezEtAl2012}). Cacchiani et al.~(\citeyear{cacchiani2016single}) present a branch-and-cut algorithm for two types of uncertainty sets, a discrete set of scenarios, and a polytope. Atamt\"urk and Zhang~(\citeyear{atamturk2007two}) present a two-stage robust optimization approach where some capacity decisions have to be made before, and other after the demand realization. The decisions have to guarantee that in any case the demand can be routed through the network.

In the multi-commodity case, several studies propose different models and uncertainty sets. 
For instance, Altin et al.~(\citeyear{AlAmBePi2007,AltinYamanPinar2011}) propose the so-called Hose uncertainty model, Belotti et al.~(\citeyear{BelottiCaponeCarelloMalucelli2008}) in the context of statistical multiplexing, and Koster et al.~(\citeyear{KoKuRa13}) the budget uncertainty set introduced by Bertsimas and Sim~(\citeyear{BertsimasSim2003,BertsimasSim2004}). In these studies, the flow is sent proportionally with the demand. In case the flow can be adapted to the demand, a two-stage robust approach is followed. While Mattia~(\citeyear{Mattia2013}) studies dynamic routing, Poss and Raack~(\citeyear{PossRaack2013}) suggest to use affine recourse options.

\label{Sec:RelatedWork}
\section{Robust Minimum Cost Flow Problem under Consistent Flow Constraints} 
\label{Sec:Problemdef}
\subsection{Definition \& Notation}	
\label{Subsec:Def}
The \ProblemName{} problem is an extension of the MCF problem where supply and demand is subject to uncertainty. 
The uncertainty is represented by a set of discrete scenarios $\Lambda$ where we do not have any knowledge which scenario is realized. 
Considering these scenarios, let a \textit{digraph} $G=(V,A)$ with vertex set $V$ and arc set $A$ be given. 
The set of arcs $A$ is defined by two disjoint sets, i.e., $A=A^\text{fix}\cup A^\text{free}$, where we refer to arcs of set $\fix$ as \textit{fixed arcs} and arcs of set $\free$ as \textit{free arcs}. 
Independent of the scenarios arc \textit{capacities} $u:A(G)\rightarrow \mathbb{Z}_{\geq 0}$ and arc \textit{cost} $c:A(G)\rightarrow \mathbb{Z}_{\geq 0}$ are given. 
In contrast, vertex \textit{balances} $b^\lambda:V(G)\rightarrow \mathbb{Z}$ with $\sum_{v\in V}b^\lambda(v)=0$ that define the supply and demand realizations are given for every scenario $\lambda\in \Lambda$, denoted by $\boldsymbol{b}=(b^1,\ldots,b^{|\Lambda|})$.
A positive balance indicates a \textit{source} while a negative balance indicates a \textit{sink}. 
Note that, in general, the source (sink) vertices do not necessarily have to be the same in every scenario.
In case that each scenario has only one vertex with a positive (negative) balance, we refer to these sources as \textit{single sources (sinks)}.
If the single sources (sinks) are defined by the same vertex for every scenario, we say that the problem has a \textit{unique source (sink)}. 
Combined, we obtain the \textit{network} $(G,u,c,\boldsymbol{b})$. 

Considering only a single scenario $\lambda\in \Lambda$, analogues to the MCF problem a \textit{$b^\lambda$-flow} is defined by a function $f^\lambda : A(G)\rightarrow  \mathbb{Z}_{\geq 0}$ that satisfies the \textit{capacity constraints}
$$0\le f^\lambda(a)\le u(a)$$ on all arcs $a\in A$
and the \textit{flow balance constraints }
\[
\sum_{a=(v,w)\in A}f^\lambda(a)-\sum_{a=(w,v)\in A}f^\lambda(a)=b^\lambda(v)
\]
at every vertex $v\in V$. 
The cost of a $b^\lambda$-flow $f^\lambda$ is defined by $$c(f^\lambda)=\sum_{a\in A} c(a)\cdot f^\lambda(a).$$
To consider a set of scenarios $\Lambda$, we need to introduce a new definition of a flow, a so called \textit{robust $\boldsymbol{b}$-flow}.
\begin{definition}[Robust Flow]
Given a network $(G=(V,A=A^{\text{fix}}\cup A^{\text{free}}),u,c, \boldsymbol{b})$, a \textit{robust $\boldsymbol{b}$-flow} $\boldsymbol{f}=(f^1,\ldots,f^{|\Lambda|})$ is defined by a $|\Lambda|$-tuple of $b^\lambda$-flows $f^\lambda: A(G)\rightarrow  \mathbb{Z}_{\geq 0} $ that satisfy the \textit{consistent flow constraints} $f^\lambda(a)=f^{\lambda^\prime}(a)$ on all fixed arcs $a\in A^\text{fix}$ for all scenarios $\lambda, \lambda^\prime \in \Lambda$. The cost of a robust $\boldsymbol{b}$-flow $\boldsymbol{f}$ is defined by the maximum flow cost among all scenarios, i.e., $c(\boldsymbol{f})=\max_{\lambda\in \Lambda}c(f^\lambda)$.
\end{definition}
We refer to the flow value on an arc of set $\fix$ as its \textit{load}. 
Accordingly, the consistent flow constraints are satisfied if the load of a fixed arc is equal in every scenario. 
The \ProblemName{} problem can finally be formulated as follows.
\begin{definition}[\ProblemName{}]
Given a network $(G=(V,A=A^{\text{fix}}\cup A^{\text{free}}),u,c,\boldsymbol{b})$, the \textit{robust minimum cost flow problem under consistent flow constraints} \linebreak[3] (\ProblemName{}) is to find a robust $\boldsymbol{b}$-flow $\boldsymbol{f}=(f^1,\ldots, f^{|\Lambda|})$ of minimum cost.
\end{definition}
Note that in case of a single scenario, i.e., $|\Lambda|=1$, the \ProblemName{} problem corresponds to the MCF problem.  
Otherwise, however, there are major differences as the following section shows.

\subsection{Structural Results}	
\label{Subsec:StrucRes}
In this section, we present structural results of the \ProblemName{} problem. 
In particular, differences to the MCF problem are pointed out where the main difference is the following.
Given a network with integral arc capacities, by Dantzig and Fulkerson~\citep{korte2012combinatorial} there always exists an optimal integral flow for the MCF problem.
This useful integral flow property is assumed to be given in most studies. However, the integral flow property does not hold for the \ProblemName{} problem as the following example shows. 
\begin{example}
For a set of two scenarios $\Lambda=\{1,2\}$, let a network $(G,u,c,\boldsymbol{b})$ with capacity $u\equiv1$ be given, where digraph $G$, its cost $c$, and the non-zero balances $\boldsymbol{b}$ are visualized in Figure~\ref{fig:IntegralityExample}.
An optimal integral robust $\boldsymbol{b}$-flow $\boldsymbol{f}=(f^1,f^2)$ is defined by a first scenario flow $f^1$ that sends one unit along path $v_1v_3v_5v_8$, and a second scenario flow $f^2$ that sends one unit each along path $v_1v_3v_5v_6$ and $v_1v_4v_5v_7v_6$. 
This results in cost of $c(\boldsymbol{f})= 4$ as $c(f^1)=4$ and $c(f^2)=2+0=2$ hold true.

However, by neglecting the integral flow requirement, there exists a robust $\boldsymbol{b}$-flow $\boldsymbol{\tilde{f}}=(\tilde{f}^1,\tilde{f}^2)$ with cost of $c(\boldsymbol{\tilde{f}})=3$.
Flow $\tilde{f}^1$ sends a half unit each along paths $v_1v_3v_5v_8$ and $v_1v_4v_5v_8$ ending up in total cost of $c(\tilde{f}^1)=3$.
Flow $\tilde{f}^2$ sends a half unit each along paths $v_1v_2v_5v_6$ and $v_1v_3v_5v_6$, and one unit along path $v_1v_4v_5v_7v_6$, also ending up in cost of $c(\tilde{f}^2)=3$. 
\end{example}
\begin{figure}
	\begin{adjustbox}{max width=1\textwidth, max height=1\textheight}
		\subfloat[]
		{\label{fig:IntegralityExample}\includegraphics{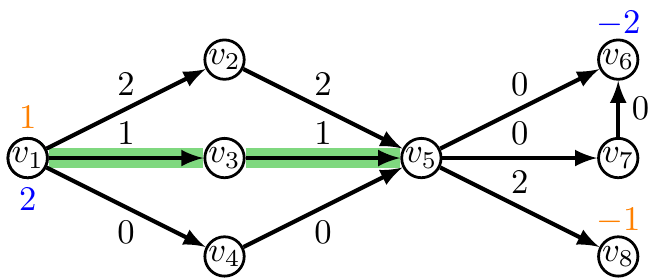}}
		\hspace*{1cm}
		\subfloat[]
		{\label{fig:ExampleCost}\includegraphics{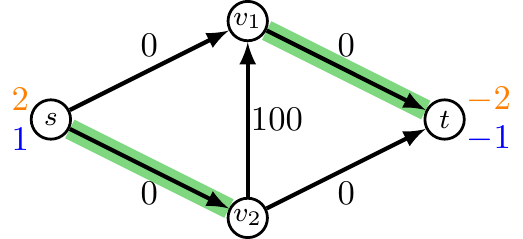}}
	\end{adjustbox}
\centering\includegraphics{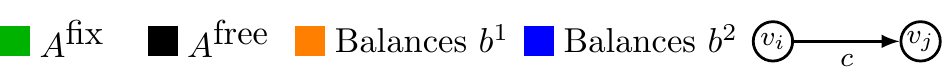}
\caption{(a) A non-integral robust $\boldsymbol{b}$-flow is the only optimal solution (b) A scenario flow that does not send the maximum demand among all scenarios causes the maximum cost in a unique source, unique sink network}
\end{figure}
\begin{corollary}
Considering the continuous relaxation of the \ProblemName{} problem, there does not always exist an integral robust flow with minimum cost even if all arc capacities are integral.
\end{corollary}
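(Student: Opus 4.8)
The plan is to obtain the corollary directly from the network exhibited in the example immediately preceding it, so the whole argument reduces to verifying two claims about that instance: \emph{(i)} the fractional tuple $\boldsymbol{\tilde f}=(\tilde f^1,\tilde f^2)$ described there is a genuine robust $\boldsymbol b$-flow of cost $3$ once the integrality requirement $f^\lambda\colon A(G)\to\mathbb Z_{\ge 0}$ is relaxed to $f^\lambda\colon A(G)\to\mathbb R_{\ge 0}$, and \emph{(ii)} every \emph{integral} robust $\boldsymbol b$-flow on that network has cost at least $4$. Granting these, the continuous optimum is at most $c(\boldsymbol{\tilde f})=3$, which is strictly less than the cost of \emph{any} integral robust $\boldsymbol b$-flow; hence no integral robust flow attains the minimum of the continuous relaxation, which is exactly the assertion of the corollary. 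Note that for this it is not necessary to pin down the continuous optimum precisely — the fractional solution serves only as an upper bound.

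For \emph{(i)} I would check, scenario by scenario, that $\tilde f^1$ and $\tilde f^2$ satisfy the capacity constraints $0\le\tilde f^\lambda(a)\le u(a)=1$ on every arc (each of the listed paths carries a half unit or one unit, and no arc is overloaded by two path-halves in the same scenario), the flow balance constraints at every vertex (the stated path decompositions route precisely the non-zero balances $\boldsymbol b$ of Figure~\ref{fig:IntegralityExample}), and the consistent flow constraints $\tilde f^1(a)=\tilde f^2(a)$ on every fixed arc $a\in A^{\text{fix}}$. The cost evaluation $c(\tilde f^1)=c(\tilde f^2)=3$, and therefore $c(\boldsymbol{\tilde f})=3$, then follows from the arc costs shown in the figure.

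The substantive part is \emph{(ii)}, the lower bound for integral solutions. Since $u\equiv 1$, any integral $b^\lambda$-flow decomposes into arc-disjoint paths, so each scenario admits only finitely many flows and the claim can be settled by a short case distinction. The leverage comes from the interaction between the consistent flow constraints and the differing balance vectors: fixing the common load of the fixed arcs (each being $0$ or $1$) simultaneously restricts, in \emph{both} scenarios, which path systems remain available, and one checks that in every jointly feasible branch at least one scenario is forced onto a path expensive enough to push $c(f^\lambda)$ to $4$ or more. I expect this enumeration — first determining which fixed-arc load assignments are feasible for both balance vectors at once, and then exhibiting the cheapest integral flow in each surviving case — to be the main obstacle, though it is entirely finite and elementary on a network of this size. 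Finally, the integral robust flow $\boldsymbol f$ of cost $4$ already given in the example shows the bound in \emph{(ii)} is tight, which completes the proof.
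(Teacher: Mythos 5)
Your proposal is correct and follows essentially the same route as the paper: the corollary is drawn directly from the preceding Example on the network of Figure~\ref{fig:IntegralityExample}, where the fractional robust $\boldsymbol{b}$-flow of cost $3$ undercuts the optimal integral robust $\boldsymbol{b}$-flow of cost $4$. Your two verification claims (feasibility of the fractional flow as an upper bound, and the finite case check that every integral robust flow costs at least $4$) simply make explicit what the paper asserts within the Example itself.
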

We note that if no integer requirements for a robust flow are given, the \ProblemName{} problem can be solved by a simple linear program (LP) in polynomial time in $|V|$,$|A|$ and $|\Lambda|$.
However, as applications of the \ProblemName{} problem often require integral flow values, hereafter this paper only concentrates on integral solutions. 
Further motivated by applications, in the next step, we investigate the \ProblemName{} problem where either the load of the fixed arcs is given, or the number of fixed arcs is constant. 
In logistics for example, this complies with finding a solution of minimum cost if the transport is already contractually agreed or limited. 
The following results show that we can solve these special cases in polynomial time. 

\begin{lemma}\label{lem:Transformation}
Let $\mathcal{I}=(G=(V,A=\fix \cup \free),u,c,\boldsymbol{b})$ be a \ProblemName{} instance. 
For a given load $\ell: \fix(G) \rightarrow \mathbb{Z}_{\geq 0}$, 
an optimal robust $\boldsymbol{b}$-flow $\boldsymbol{f}$ that satisfies $f^\lambda(a)=\ell(a)$ for all fixed arcs $a\in \fix(G)$ in every scenario $\lambda\in \Lambda$ can be computed in polynomial time if one exists.  
\end{lemma}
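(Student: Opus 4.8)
The plan is to exploit the observation that once the load $\ell$ on every fixed arc is prescribed, the consistent flow constraints $f^\lambda(a)=f^{\lambda'}(a)$ for $a\in\fix(G)$ are automatically satisfied, so the coupling between the scenarios disappears and the problem decomposes into $|\Lambda|$ \emph{independent} minimum cost flow problems. Concretely, for each fixed scenario $\lambda\in\Lambda$ I would build an ordinary MCF instance on the free arcs only: delete every fixed arc $a=(v,w)\in\fix(G)$ and adjust the balances by subtracting $\ell(a)$ from $b^\lambda(v)$ and adding $\ell(a)$ to $b^\lambda(w)$ (aggregating the contributions of all incident fixed arcs), obtaining modified balances $\hat b^\lambda$; note $\sum_{v}\hat b^\lambda(v)=\sum_v b^\lambda(v)=0$ still holds. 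A $b^\lambda$-flow of the original network with $f^\lambda(a)=\ell(a)$ on all fixed arcs then corresponds bijectively, and cost-preservingly up to the additive constant $\sum_{a\in\fix(G)}c(a)\,\ell(a)$, to a $\hat b^\lambda$-flow on the reduced network $(V,\free(G))$ with capacities and costs restricted to the free arcs. Such a flow exists if and only if $\ell(a)\le u(a)$ for every $a\in\fix(G)$ and the reduced MCF instance is feasible.

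Next I would solve each of these $|\Lambda|$ reduced MCF instances with any polynomial-time minimum cost flow algorithm. Since all capacities and the load $\ell$ are integral, the reduced balances $\hat b^\lambda$ are integral, so by the Integral Flow Theorem~\citep{korte2012combinatorial} each feasible instance admits an integral optimal flow $\hat f^\lambda$, which lifts back to an integral $b^\lambda$-flow $f^\lambda$ with the prescribed load. If some reduced instance is infeasible, or the test $\ell\le u$ on the fixed arcs fails, then no robust $\boldsymbol{b}$-flow respecting $\ell$ exists and the algorithm reports this, which is consistent with the ``if one exists'' clause.

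It then remains to argue optimality of the assembled flow $\boldsymbol{f}=(f^1,\dots,f^{|\Lambda|})$. Because the scenarios are now independent and each scenario cost equals $c(f^\lambda)=c(\hat f^\lambda)+\sum_{a\in\fix(G)}c(a)\,\ell(a)$, differing only by a constant common to all scenarios, minimizing the maximum $\max_{\lambda\in\Lambda}c(f^\lambda)$ is achieved simultaneously by minimizing every $c(\hat f^\lambda)$ individually; hence $\boldsymbol{f}$ attains the minimum cost over all robust $\boldsymbol{b}$-flows with load $\ell$. The total running time is $|\Lambda|$ times the cost of one MCF computation plus the linear-time graph transformations, which is polynomial in $|V|$, $|A|$, and $|\Lambda|$.

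I expect the only real subtlety — rather than a genuine obstacle — to lie in the bookkeeping of the balance transformation (correct signs, simultaneous handling of several fixed arcs incident to the same vertex, and the degenerate case $\ell(a)>u(a)$), together with the clean reduction that decoupling makes ``minimize the maximum'' collapse to ``minimize each scenario separately''. Everything else is a direct appeal to standard polynomial-time MCF machinery and the Integral Flow Theorem.
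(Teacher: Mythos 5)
Your proposal is correct and follows essentially the same route as the paper: delete the fixed arcs, shift the balances by the prescribed load $\ell$, solve the resulting $|\Lambda|$ independent MCF instances in polynomial time, and reassemble, with optimality following because the fixed-arc cost is a common additive constant so minimizing each scenario separately minimizes the maximum (the paper phrases this last step as a short contradiction argument, but the content is the same).
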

\begin{proof}
We transform instance $\mathcal{I}$ to $|\Lambda|$ simple minimum cost $\tilde{b}^\lambda$-flow instances $\widetilde{\mathcal{I}}^\lambda=(\widetilde{G},u,c,\linebreak[3]\tilde{b}^\lambda)$ that can be considered for every scenario $\lambda \in \Lambda$ separately. 
Instances $\widetilde{\mathcal{I}}^\lambda$, $\lambda\in \Lambda$ are obtained by deleting the fixed arcs from digraph $G$ resulting in digraph $\widetilde{G}$, i.e., $\widetilde{G}=G-\fix$, while at the same time the new balances $\tilde{b}^\lambda:V(\widetilde{G})\rightarrow \mathbb{Z}$ are defined as follows
\begin{align*}
		\tilde{b}^\lambda(v)= b^\lambda(v) + \sum_{a=(w,v)\in \fix(G)}\ell (a) - \sum_{a=(v,w)\in \fix(G)} \ell(a).
\end{align*}
After computing minimum cost $\tilde{b}^\lambda$-flows $\tilde{f}^\lambda$ for all instances $\widetilde{\mathcal{I}}^\lambda$, $\lambda\in \Lambda$, a corresponding robust $\boldsymbol{b}$-flow $\boldsymbol{f}=(f^1,\ldots,f^{|\Lambda|})$ for instance $\mathcal{I}$ is defined as 
\begin{align*}
f^\lambda(a)=
\begin{cases}
\ell(a) 				&\text{ for all fixed arcs }a\in \fix(G),\\
\tilde{f}^\lambda(a) 	&\text{ for all free arcs }a\in \free(G),
\end{cases}
\end{align*}
and causes cost of $$c(\boldsymbol{f})=\max_{\lambda\in \Lambda} c(\tilde{f}^\lambda) + \sum_{a\in \fix(G)} c(a)\cdot \ell(a).$$
Assume the constructed robust $\boldsymbol{b}$-flow $\boldsymbol{f}$ is not optimal, i.e., a robust $\boldsymbol{b}$-flow $\boldsymbol{\hat{f}}=(\hat{f}^1,\ldots,\hat{f}^{|\Lambda|})$ exists with cost
\begin{align*}
c(\hat{f}^{\lambda_1})=\max_{\lambda\in \Lambda}c(\hat{f}^\lambda)=c(\boldsymbol{\hat{f}})<c(\boldsymbol{f})=\max_{\lambda\in \Lambda}c(f^\lambda)=c(f^{\lambda_2})
\end{align*}
for scenarios $\lambda_1, {\lambda_2}\in \Lambda$.
Let $\overline{f}^\lambda$ 
denote the flow which results from restricting the scenario flow ${\hat{f}}^\lambda$ of instance $\mathcal{I}$ to instance $\widetilde{\mathcal{I}}^\lambda$, $\lambda\in \Lambda$. 
As the load on the fixed arcs is given, the values of flows $\boldsymbol{\hat{f}}$ and $\boldsymbol{f}$ on the fixed arcs are equal for every scenario, i.e., $\hat{f}^\lambda(a)=f^\lambda(a)=\ell(a)$ for $a\in \fix $ and $\lambda\in \Lambda$. 
Using this insight, we obtain
\begin{align*}
	&& c(\hat{f}^{\lambda_1})
	&<c(f^{\lambda_2}) &\\
	\Leftrightarrow && \sum_{a\in A(G)}c(a) \hat{f}^{\lambda_1}(a) 
	&<\sum_{a\in A(G)} c(a)  f^{\lambda_2}(a) &\\
	\Leftrightarrow  && \sum_{a\in \free(G)}c(a) \hat{f}^{\lambda_1}(a) 
	&<\sum_{a\in \free(G)} c(a)  f^{\lambda_2}(a)&\\
	\Leftrightarrow && c(\overline{f}^{\lambda_1}) 
	&<c(\tilde{f}^{\lambda_2}).
\end{align*}
Furthermore, as by definition flow $\boldsymbol{\hat{f}}$ satisfies the consistent flow constraints,
\linebreak[3] $c(\overline{f}^{\lambda_1})=\max_{\lambda\in \Lambda}c(\overline{f}^\lambda)$ is implied by $c(\hat{f}^{\lambda_1})=\max_{\lambda\in \Lambda}c(\hat{f}^\lambda)$. 
Overall, we obtain
\begin{align*}
c(\tilde{f}^{\lambda_2})
>c(\overline{f}^{\lambda_1})
=\max_{\lambda\in \Lambda}c(\overline{f}^\lambda)
\geq c(\overline{f}^{\lambda_2}),
\end{align*}
which is a contradiction to the fact that flow $\tilde{f}^{{\lambda_2}}$ is an optimal $\tilde{b}^{\lambda_2}$-flow for instance $\widetilde{\mathcal{I}}^{\lambda_2}$.

Considering the runtime, the transformation of instance $\mathcal{I}$ to instances $\widetilde{\mathcal{I}}$, $\lambda\in \Lambda$ is done in $\mathcal{O}(|\Lambda|\cdot|A|)$ time. 
Subsequently, a minimum cost $\tilde{b}^\lambda$-flow $\tilde{f}^\lambda$ can be computed for every scenario $\lambda\in \Lambda$ by, for example, the Minimum Mean Cycle-Canceling Algorithm in $\mathcal{O}(|A|^3|V|^2\log |V|)$ time~\citep{korte2012combinatorial}. 
Hence, an optimal robust $\boldsymbol{b}$-flow can be computed in $\mathcal{O}(|\Lambda|\cdot|A|^3\cdot|V|^2\log |V|)$ total time.
Note that if for a scenario $\lambda\in \Lambda$ no feasible $\tilde{b}^\lambda$-flow exists, there also does not exist a robust $\boldsymbol{b}$-flow. 
\end{proof}
\begin{corollary}
The \ProblemName{} problem is solvable in polynomial time for a constant number of fixed arcs. 
\end{corollary}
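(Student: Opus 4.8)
\emph{Proof idea.} The plan is to treat $k:=|\fix|$ as a constant and to reformulate the instance as a mixed-integer linear program in which \emph{only} the loads of the fixed arcs are required to be integral. That program has a fixed number ($k$) of integer variables while everything else enters polynomially, and mixed-integer programs with a constant number of integer variables are solvable in polynomial time by Lenstra's algorithm for integer programming in fixed dimension (applied, via an LP-based membership oracle, to the polyhedron obtained by projecting out the continuous block). So the whole point is to push integrality onto only the $O(1)$ load coordinates; the previous lemma is exactly what makes that legitimate.

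Concretely, I would introduce continuous variables $f^\lambda(a)\ge 0$ for every arc $a\in A$ and every scenario $\lambda\in\Lambda$, one continuous variable $t$, and integer variables $\ell(a)$ for $a\in\fix$, with constraints: the capacity constraints $0\le f^\lambda(a)\le u(a)$; the flow balance constraints of each scenario; the consistency constraints, written as $f^\lambda(a)=\ell(a)$ for $a\in\fix$ and all $\lambda$; and the linearised objective $\sum_{a\in A}c(a)f^\lambda(a)\le t$ for every $\lambda$. Minimising $t$ over this system has the same optimal value as the \ProblemName{} instance, and an optimal robust $\boldsymbol b$-flow can be read off: every robust $\boldsymbol b$-flow is feasible here with objective value $c(\boldsymbol f)$, and conversely, from an optimal solution the integral load vector $\ell$ can be fed into Lemma~\ref{lem:Transformation} to produce a fully integral robust $\boldsymbol b$-flow. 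The latter does not cost more than the program's value, because with $\ell$ fixed each $f^\lambda$ restricted to the free arcs is a feasible $\tilde b^\lambda$-flow for $\widetilde{\mathcal{I}}^\lambda$, hence costs at least the optimum computed in Lemma~\ref{lem:Transformation}, so $t\ge\max_{\lambda\in\Lambda}c(\tilde f^\lambda)+\sum_{a\in\fix}c(a)\ell(a)$; infeasibility of the program likewise certifies that no robust $\boldsymbol b$-flow exists.

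Invoking the fixed-dimension result on this program then gives a running time polynomial in $|V|$, $|A|$, $|\Lambda|$ and the encoding lengths of $u$, $c$, $\boldsymbol b$, since the number of integer variables is $|\fix|=O(1)$. One may also view this more structurally: by Lemma~\ref{lem:Transformation} the optimal robust cost as a function of the load is $\ell\mapsto\max_{\lambda\in\Lambda}\mathrm{opt}(\widetilde{\mathcal{I}}^\lambda(\ell))+\sum_{a\in\fix}c(a)\ell(a)$, a convex, piecewise-linear function on the polytope of admissible loads (a maximum of sums of minimum-cost-flow value functions composed with an affine map, plus a linear term), so one is minimising a convex piecewise-linear function over the integer points of a $k$-dimensional polytope, which in fixed dimension is again polynomial.

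The routine parts are the correctness of the model and the feasibility bookkeeping. The step that needs care — and the reason the argument is not circular — is relaxing integrality on the free arcs: this is valid precisely because, once the integral load $\ell$ is fixed, the instance decouples into ordinary minimum-cost $\tilde b^\lambda$-flow problems on $\widetilde G$ with integral capacities and balances, each of which attains an integral optimum, which is the content of Lemma~\ref{lem:Transformation}; without this reduction the straightforward formulation would carry $\Theta(|\Lambda|\,|A|)$ integer variables and the fixed-dimension argument would not apply. I expect the only real obstacle in a fully written proof to be citing the fixed-number-of-integer-variables result in the mixed form actually needed (and spelling out the LP-oracle reduction to pure fixed-dimension integer programming); the remainder is standard LP and flow bookkeeping.
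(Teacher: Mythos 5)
Your proposal matches the paper's proof essentially verbatim: formulate the problem as a mixed-integer linear program in which only the $|\fix|=O(1)$ load variables are integral, solve it in polynomial time by Lenstra's algorithm, and then invoke Lemma~\ref{lem:Transformation} to recover an integral robust $\boldsymbol{b}$-flow of no greater cost. The added detail (explicit min-max linearisation and the convexity remark) is fine but does not change the argument.
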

\begin{proof}
We formulate the \ProblemName{} problem as LP where we only require the constant number of variables that indicate the load on the fixed arcs to be integral. 
The resulting mixed integer linear program can be solved in polynomial time by Lenstra's algorithm~(\citeyear{lenstra1983integer}). 
In case that the resulting robust flow is not integral, we can find an integral flow with equal cost by Lemma~\ref{lem:Transformation} in polynomial time. 
\end{proof}

At the end of this section, we focus on the objective function of the \ProblemName{} problem. 
From the MCF problem, or the multi-commodity flow problem~\citep{korte2012combinatorial}, we know that due to different sources and sinks a flow that sends one unit may cause higher cost than a flow sending two units. 
Obviously, the same property remains true for instances of the \ProblemName{} problem.
If we consider the \ProblemName{} problem on networks with a unique source and a unique sink, we might assume, analogous to the MCF problem, that the cost of a robust flow is determined by the scenario flow which sends the maximum demand.
However, the following example shows that this is not true. 
\begin{example}\label{expl:ExampleCost}
For a set of two scenarios $\Lambda=\{1,2\}$, let a network $(G,u,c,\boldsymbol{b})$ with capacity $u\equiv1$ be given, where digraph $G$, its cost $c$, and the non-zero balances $\boldsymbol{b}$ are visualized in Figure~\ref{fig:ExampleCost}.
The only feasible and therefore also optimal solution $\boldsymbol{f}=(f^1, f^2)$ to the \ProblemName{} problem is easy to determine.
Considering the second scenario flow $f^2$ first, the only option to send two flow units from source $s$ to sink $t$ is along paths $sv_1t$ and $sv_2t$ due to the capacity constraints. 
As the second scenario flow $f^2$ uses both fixed arcs, the first scenario flow $f^1$ must also send flow along these arcs. 
For this reason, the only option to send one flow unit from source $s$ to sink $t$ is along path $sv_2v_1t$.
The cost of the robust $\boldsymbol{b}$-flow $\boldsymbol{f}$ is $c(\boldsymbol{f})=c(f^1)=100$. 
\end{example}
\begin{corollary}\label{cor:cost}
In a network with a unique source and a unique sink, the cost of a robust $\boldsymbol{b}$-flow is not necessarily determined by the $b^\lambda$-flow which sends the maximum demand among all scenarios $\lambda\in \Lambda$.
\end{corollary}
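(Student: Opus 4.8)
The plan is to let Example~\ref{expl:ExampleCost} serve as the witness, so that the proof of Corollary~\ref{cor:cost} reduces to checking that the network of Figure~\ref{fig:ExampleCost} has the structure required by the hypothesis and that its unique robust $\boldsymbol{b}$-flow exhibits the claimed cost behaviour. First I would observe that, by construction, each of the two scenarios has a single source $s$ and a single sink $t$, and since both scenarios share the same source and the same sink, the network has a unique source and a unique sink; hence the hypothesis of the corollary is satisfied.

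Next I would pin down the unique feasible robust flow, following the reasoning already given in the example. For the scenario with the larger demand (two units) the capacity bound $u\equiv 1$ forces the two units to be routed along the two $s$--$t$ paths $sv_1t$ and $sv_2t$, so in particular both fixed arcs carry load one. The consistent flow constraints then force the scenario with the smaller demand (one unit) to route its single unit through both fixed arcs as well, and the only $s$--$t$ path using both fixed arcs is $sv_2v_1t$. Thus the robust $\boldsymbol{b}$-flow is uniquely determined and therefore optimal.

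Finally I would compare the two scenario costs: the flow $f^2$ sending the maximum demand uses only the cheap arcs of $sv_1t$ and $sv_2t$, whereas $f^1$ is forced onto the expensive arc of $sv_2v_1t$, so that $c(f^1)=100 > c(f^2)$ and hence $c(\boldsymbol{f})=c(f^1)$. Consequently the cost of the robust flow is attained by the scenario flow sending the \emph{minimum} demand rather than the maximum one, which is exactly the assertion of the corollary.

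The only point that requires any care — and the closest thing to an obstacle — is the routing argument: one must verify that the capacities genuinely rule out every alternative way of sending two units in the high-demand scenario (so that both fixed arcs are necessarily saturated) and that, once both fixed arcs must be used, $sv_2v_1t$ is the unique admissible $s$--$t$ path for one unit in the low-demand scenario. Both facts follow from a short case distinction on the arcs incident to $v_1$ and $v_2$ in Figure~\ref{fig:ExampleCost}; everything else is immediate, and the statement is really just a restatement of the example.
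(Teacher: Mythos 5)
Your proposal is correct and coincides with the paper's own justification: the corollary is established precisely by Example~\ref{expl:ExampleCost}, where the capacity constraints force the two-unit scenario flow onto both fixed arcs, the consistent flow constraints then force the one-unit scenario flow along $sv_2v_1t$, and hence $c(\boldsymbol{f})=c(f^1)=100$ is attained by the minimum-demand scenario. The routing verification you flag as the only delicate point is exactly the argument the paper spells out in the example, so nothing is missing.
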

As a result, independent of the number of sources and sinks given, for solving the \ProblemName{} problem, we cannot only concentrate on a single scenario.
However, by reason of the following lemma, in a network with a unique source and a unique sink it is sufficient to concentrate on two scenarios only, namely those in which the minimum and maximum demand is sent.  
\begin{lemma}\label{lem:UniqueSourceUniqueSinkLimitationCost}
Let $\mathcal{I}=(G=(V,A=\free\cup \fix),u,c,\boldsymbol{b})$ be a \ProblemName{} instance with a unique source $s$ and a unique sink $t$. Without loss of generality, let the scenarios $\lambda\in \Lambda$ be strictly ordered in ascending order of their supply balances $b^\lambda$, i.e., $b^{1}(s)<b^{2}(s)<\ldots< b^{|\Lambda|}(s)$. 
Further, let feasible integral $b^\lambda$-flows $f^\lambda$ for scenarios $\lambda=1$ and $\lambda=|\Lambda|$ be given that satisfy the consistent flow constraints, i.e., $f^{1}(a) = f^{|\Lambda|}(a)$ for $a\in A^{\text{fix}}$.
A robust $\boldsymbol{b}$-flow $\boldsymbol{f}=(f^1,\ldots,f^{|\Lambda|})$ with cost of $c(\boldsymbol{f})=\max\{c(f^1), c(f^{|\Lambda|})\}$ can be computed in polynomial time.
\end{lemma}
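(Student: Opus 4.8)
The plan is to keep the two prescribed flows $f^1$ and $f^{|\Lambda|}$ untouched and to manufacture the missing scenario flows $f^2,\dots,f^{|\Lambda|-1}$ by linearly interpolating between $f^1$ and $f^{|\Lambda|}$ and then rounding. Write $\ell(a):=f^1(a)=f^{|\Lambda|}(a)$ for $a\in\fix$ for the common load prescribed by the consistent flow constraints. Since the instance has a unique source $s$ and a unique sink $t$, every balance vector $b^\lambda$ is supported on $\{s,t\}$ with $b^\lambda(t)=-b^\lambda(s)$; hence for each intermediate scenario $\lambda\in\{2,\dots,|\Lambda|-1\}$ the number $\mu_\lambda:=\bigl(b^{|\Lambda|}(s)-b^\lambda(s)\bigr)/\bigl(b^{|\Lambda|}(s)-b^1(s)\bigr)$ lies in $(0,1)$ (the denominator is positive by the strict ordering of the scenarios) and satisfies $b^\lambda=\mu_\lambda b^1+(1-\mu_\lambda)b^{|\Lambda|}$ at every vertex. (If $|\Lambda|\le 2$ there is nothing to construct.)

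First I would set $g^\lambda:=\mu_\lambda f^1+(1-\mu_\lambda)f^{|\Lambda|}$ and check that $g^\lambda$ is a feasible, generally fractional, $b^\lambda$-flow: the capacity bounds hold because each $g^\lambda(a)$ is a convex combination of two values in $[0,u(a)]$; flow conservation at every vertex yields balance $\mu_\lambda b^1+(1-\mu_\lambda)b^{|\Lambda|}=b^\lambda$; on every fixed arc $g^\lambda(a)=\ell(a)$ since $f^1$ and $f^{|\Lambda|}$ already agree there; and $c(g^\lambda)=\mu_\lambda c(f^1)+(1-\mu_\lambda)c(f^{|\Lambda|})\le\max\{c(f^1),c(f^{|\Lambda|})\}$.

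Next I would turn $g^\lambda$ into an integral flow of no larger cost by exactly the reduction used in the proof of Lemma~\ref{lem:Transformation}: deleting the fixed arcs and replacing $b^\lambda$ by the shifted balances $\tilde b^\lambda$ transforms the restriction of $g^\lambda$ to $\free$ into a feasible fractional flow of the reduced instance $\widetilde{\mathcal I}^\lambda$ whose cost is $c(g^\lambda)-\sum_{a\in\fix}c(a)\ell(a)$. All data of $\widetilde{\mathcal I}^\lambda$ are integral, so a minimum cost flow $\tilde f^\lambda$ of $\widetilde{\mathcal I}^\lambda$ exists, may be taken integral, and has cost at most that of $g^\lambda$ restricted to $\free$. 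Defining $f^\lambda$ by $\ell$ on the fixed arcs and by $\tilde f^\lambda$ on the free arcs then gives an integral $b^\lambda$-flow with $f^\lambda(a)=\ell(a)$ on $\fix$ and $c(f^\lambda)\le c(g^\lambda)\le\max\{c(f^1),c(f^{|\Lambda|})\}$. Assembling $\boldsymbol f=(f^1,\dots,f^{|\Lambda|})$: every scenario flow carries the load $\ell$ on every fixed arc, so the consistent flow constraints are met, and $c(\boldsymbol f)=\max_{\lambda\in\Lambda}c(f^\lambda)=\max\{c(f^1),c(f^{|\Lambda|})\}$ because no interpolated scenario exceeds that bound while $f^1,f^{|\Lambda|}$ attain it. For the running time, computing all $\mu_\lambda$ and $g^\lambda$ costs $\mathcal O(|\Lambda|\cdot|A|)$, and each of the at most $|\Lambda|-2$ minimum cost flow computations can be carried out in $\mathcal O(|A|^3|V|^2\log|V|)$ time, e.g.\ by the Minimum Mean Cycle-Canceling Algorithm, for a total of $\mathcal O(|\Lambda|\cdot|A|^3|V|^2\log|V|)$.

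The step I expect to be the crux is this rounding: a convex combination of integral flows need not be integral, and the content of the argument is that passing to an integral minimum cost flow on the integral reduced instance simultaneously restores integrality and cannot raise the cost, while re-imposing the fixed load $\ell$ afterwards keeps the consistent flow constraints intact. Everything else --- the interpolation identity $b^\lambda=\mu_\lambda b^1+(1-\mu_\lambda)b^{|\Lambda|}$, which is where the unique-source/unique-sink hypothesis is essential, together with the convexity bound on the cost --- is routine.
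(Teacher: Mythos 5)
Your proposal is correct and follows essentially the same route as the paper: the paper also writes each intermediate balance as a convex combination of $b^1$ and $b^{|\Lambda|}$ (your $\mu_\lambda$ is its $\gamma^\lambda$), forms the interpolated fractional flows, and restores integrality on the free arcs by solving the reduced minimum cost flow instances from Lemma~\ref{lem:Transformation}, with the same cost bound and the same overall running time.
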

\begin{proof}
As we consider a network with a unique source and a unique sink, a feasible robust $\boldsymbol{b}$-flow $\boldsymbol{f}$ for instance $\mathcal{I}$ is given by the convex combination of the flows $f^1$ and $f^{|\Lambda|}$ as follows.
For every scenario $\lambda\in \Lambda\setminus \{1,|\Lambda|\}$ let $\gamma^\lambda\in [0,1]$ be a parameter such that 
\begin{align*}
b^\lambda(s)= \gamma^\lambda \cdot b^1(s) + (1-\gamma^\lambda) \cdot b^{|\Lambda|}(s) 
\end{align*}
holds. 
We define the corresponding scenario flows $f^\lambda$, $\lambda\in \Lambda\setminus \{1,|\Lambda|\}$ by
\begin{align*}
f^\lambda(a):= \gamma^\lambda \cdot f^1(a) + (1-\gamma^\lambda) \cdot f^{|\Lambda|}(a) 
\end{align*}
for all arcs $a \in A$. 
Flows $f^\lambda$, $\lambda\in \Lambda\setminus \{1,|\Lambda|\}$ satisfy the capacity and flow balance constraints, but may be non-integral on some free arcs.
Therefore, we restrict every $b^\lambda$-flow $f^\lambda$, $\lambda\in \Lambda\setminus \{1,|\Lambda|\}$ to the respective MCF instance $\widetilde{\mathcal{I}}^\lambda$ obtained analogous to the proof of Lemma~\ref{lem:Transformation}, and this results in feasible $\tilde{b}^\lambda$-flows denoted by $\tilde{f}^\lambda$. 
Let $\tilde{f}^{\lambda}_{\text{OPT}}$ be an optimal integral $\tilde{b}^\lambda$-flow for instance $\widetilde{\mathcal{I}}^\lambda$, $\lambda\in \Lambda\setminus \{1,|\Lambda|\}$, then $c(\tilde{f}^\lambda_{\text{OPT}})\leq c(\tilde{f}^{\lambda})$ holds true. 
For all scenarios $\lambda\in \Lambda\setminus\{1,|\Lambda|\}$, flows $\tilde{f}^{\lambda}_{\text{OPT}}$ and $\tilde{f}^\lambda$ can be retransformed to flows $f^\lambda_{\text{OPT}}$, and respectively, $f^\lambda$ of instance $\mathcal{I}$, ending up in cost of 
\begin{align*}
c(f^\lambda_{\text{OPT}})	&= c(\tilde{f}^{\lambda}_{\text{OPT}} ) + \sum_{a\in A^{\text{fix}}} c(a) f^1(a)\\
					&\leq c(\tilde{f}^{\lambda}) + \sum_{a\in A^{\text{fix}}} c(a) f^1(a)\\
					&= c(f^{\lambda})\\
					&=\gamma^\lambda \cdot c(f^1) + (1-\gamma^\lambda) \cdot  c(f^{|\Lambda|})\\
					&\leq \max\{c(f^1), c(f^{|\Lambda|})\}.
\end{align*}
Consequently, an optimal robust $\boldsymbol{b}$-flow $\boldsymbol{f_{\textbf{OPT}}}=(f^1,f^2_{\text{OPT}},\ldots,f^{|\Lambda|-1}_{\text{OPT}}, f^{|\Lambda|})$ with cost $c({\boldsymbol{f}_{\textbf{OPT}}})=\max\{c(f^1), c(f^{|\Lambda|})\}$ is obtained in polynomial time analogous to the proof of Lemma~\ref{lem:Transformation}.
\end{proof}
As a result of Lemma \ref{lem:UniqueSourceUniqueSinkLimitationCost}, if a network with a unique source and a unique sink is given, we only need to concentrate on the first and last scenario to solve the \ProblemName{} problem. We obtain the following conclusion about the problem's complexity which is detailed in the next section. 
\begin{corollary}
For a set of scenarios $\Lambda$ with $|\Lambda|\geq 2$, let a \ProblemName{} instance with a unique source and unique sink be given. The complexity is not influenced by the number of scenarios. 
\end{corollary}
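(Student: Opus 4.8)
The plan is to show that, on unique-source, unique-sink networks, an instance with $|\Lambda|$ scenarios is polynomial-time equivalent to one with only two scenarios, so that every complexity statement (membership in $\mathcal{P}$, $\mathcal{NP}$-hardness, inapproximability thresholds, \dots) transfers in both directions. One direction is immediate: an instance with exactly two scenarios is the special case $|\Lambda|=2$, hence no harder than the general unique-source, unique-sink problem. The substance is in the converse reduction, which I would build directly on Lemma~\ref{lem:UniqueSourceUniqueSinkLimitationCost}.

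First I would preprocess: sort the scenarios by the supply value $b^\lambda(s)$ in $\mathcal{O}(|\Lambda|\log|\Lambda|)$ time. Since there is a unique source $s$ and a unique sink $t$, the entire balance vector $b^\lambda$ is determined by $b^\lambda(s)$ (all other vertices have balance $0$ and $b^\lambda(t)=-b^\lambda(s)$), so scenarios with equal supply are identical and may be merged, leaving a strictly ordered list $b^1(s)<\dots<b^{|\Lambda|}(s)$, exactly the situation required by Lemma~\ref{lem:UniqueSourceUniqueSinkLimitationCost}. Then I would form the two-scenario \ProblemName{} instance $\mathcal{I}'$ on the same network but with scenario set $\{1,|\Lambda|\}$, and solve it by whatever procedure is available for two scenarios, obtaining either a certificate of infeasibility or an optimal pair $(g^1,g^{|\Lambda|})$ of $b^\lambda$-flows satisfying the consistent flow constraints.

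Next I would argue that the two optima coincide. For any robust $\boldsymbol{b}$-flow $\boldsymbol{f}=(f^1,\dots,f^{|\Lambda|})$ of the original instance $\mathcal{I}$, its restriction $(f^1,f^{|\Lambda|})$ is feasible for $\mathcal{I}'$ with cost $\max\{c(f^1),c(f^{|\Lambda|})\}\le\max_{\lambda\in\Lambda}c(f^\lambda)=c(\boldsymbol{f})$; hence $\mathrm{OPT}(\mathcal{I}')\le\mathrm{OPT}(\mathcal{I})$, and $\mathcal{I}'$ is infeasible only if $\mathcal{I}$ is. Conversely, feeding $(g^1,g^{|\Lambda|})$ into Lemma~\ref{lem:UniqueSourceUniqueSinkLimitationCost} yields in polynomial time a robust $\boldsymbol{b}$-flow $\boldsymbol{g}$ for $\mathcal{I}$ with $c(\boldsymbol{g})=\max\{c(g^1),c(g^{|\Lambda|})\}=\mathrm{OPT}(\mathcal{I}')$; combining the inequalities gives $\mathrm{OPT}(\mathcal{I})=\mathrm{OPT}(\mathcal{I}')$ and shows $\boldsymbol{g}$ is optimal for $\mathcal{I}$. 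Feasibility of $\mathcal{I}'$ also implies feasibility of $\mathcal{I}$, because the intermediate scenario flows constructed in the proof of Lemma~\ref{lem:UniqueSourceUniqueSinkLimitationCost} are always realizable once $g^1$ and $g^{|\Lambda|}$ are. Thus the reduction is cost-preserving and runs in time polynomial in $|V|$, $|A|$, $|\Lambda|$ plus a single call to a two-scenario solver, and conversely.

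I do not anticipate a genuine obstacle, since Lemma~\ref{lem:UniqueSourceUniqueSinkLimitationCost} does the heavy lifting; the only points needing care are the bookkeeping around ties in the supply values (handled by the merging step) and making precise the informal phrase ``the complexity is not influenced by the number of scenarios'' --- namely, that the decision version with $|\Lambda|\ge 2$ scenarios and the decision version with exactly two scenarios are polynomial-time Turing reducible to one another, so that polynomial solvability, $\mathcal{NP}$-hardness, and inapproximability results all carry over. I would phrase the proof of the corollary in exactly this form.
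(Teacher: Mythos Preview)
Your proposal is correct and follows exactly the approach the paper intends: the corollary is stated in the paper without its own proof, being presented as an immediate consequence of Lemma~\ref{lem:UniqueSourceUniqueSinkLimitationCost}, and you have simply spelled out the polynomial-time equivalence that the paper leaves implicit. The only minor difference is that you explicitly handle ties in the supply values and phrase the conclusion as mutual polynomial-time Turing reductions, which is a reasonable way to make precise what the paper's informal statement means.
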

\section{Complexity for Acyclic Digraphs}	
\label{Subsec:ComplexityAcyclic}
In this section, we investigate the complexity of the \ProblemName{} problem for networks based on acyclic digraphs. 
For convenience, we discuss the problem's complexity for networks with a unique source and multiple sinks first. 
The construction is extended to show the strong $\mathcal{NP}$-completeness for networks with a unique source and a unique sink.
Both reductions are performed from the $(3,B2)$-\textsc{Sat} problem~\citep{berman2004approximation}~--~a strongly $\mathcal{NP}$-complete special case of the $3$-\textsc{Sat} problem.
We start with formal definitions for the decision versions of the \ProblemName{} and the $(3,B2)$-\textsc{Sat} problem. 
\begin{definition}
	The decision version of the \ProblemName{} problem asks whether a robust flow exists with cost at most $\beta\in \mathbb{Z}_{\geq 0}$. 
\end{definition}
\begin{definition}[$(3,B2)$-\textsc{Sat}]
Let $\{x_1,\ldots,x_n\}$ be a set of variables. Further, let $C_1,\ldots,C_m$ be a collection of clauses of size three where every positive and negative literal $x_i$ and $\overline{x}_i$ occur exactly twice. 
The decision problem of $(3,B2)$-\textsc{Sat} asks if there exists a variable assignment that satisfies the collection of clauses. 
\end{definition} 
Using the $(3,B2)$-\textsc{Sat} problem, we obtain the following complexity result.
\begin{theorem}\label{Theorem:ReductionAcyclicGraphs}
Deciding whether a feasible solution to the \ProblemName{} problem exists for networks based on acyclic digraphs with a unique source but multiple sinks is strongly $\mathcal{NP}$-complete even if only two scenarios are considered.
\end{theorem}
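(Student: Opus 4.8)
The plan is to reduce from $(3,B2)$-\textsc{Sat}. Given variables $x_1,\dots,x_n$ and size-three clauses $C_1,\dots,C_m$ in which every literal occurs exactly twice, I would build, in polynomial time, an acyclic network on two scenarios $\Lambda=\{1,2\}$ with a single source $s$ and several sinks, small integral capacities, and, since only feasibility is at stake, arbitrary costs (say $c\equiv 0$) and an arbitrarily large bound $\beta$. Membership in $\mathcal{NP}$ is immediate: a robust $\boldsymbol{b}$-flow is a polynomial-size certificate whose capacity, flow-balance and consistent-flow constraints are checkable in polynomial time; and strong $\mathcal{NP}$-hardness then follows because $(3,B2)$-\textsc{Sat} is strongly $\mathcal{NP}$-complete and every number occurring in the construction is bounded by a small constant. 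A distinguished set $\fix$ of fixed arcs will carry an encoding of a truth assignment, and the consistent flow constraints will transfer the assignment chosen in the first scenario to the second one.

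The first scenario is the \emph{assignment scenario}. For each variable $x_i$ I would install a gadget in which $s$ is forced to push a prescribed amount of flow along unit-capacity arcs, so that integrality admits exactly two feasible routings, one interpreted as $x_i=\text{true}$ and one as $x_i=\text{false}$. The fixed arcs are placed inside these gadgets so that a feasible first-scenario flow is forced to put the same load on both arcs recording the truth of a given literal, opposite loads on a literal and its negation, and a load in $\{0,1\}$ on each; moreover one must arrange that \emph{no other} first-scenario flow is feasible. Thus the feasible first-scenario flows correspond bijectively to the Boolean assignments of the given instance. Here one has to be careful precisely because the Integral Flow Theorem fails for this problem, so one must also exclude loads that "look fractional'' relative to the intended $\{0,1\}$ pattern and loads that mix the true and false branch of a variable.

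The second scenario is the \emph{verification scenario}: by the consistent flow constraints every fixed arc now carries exactly the load determined in the first scenario. I would add a sink $d_j$ of demand $-1$ for each clause $C_j$, together with arcs making $d_j$ reachable from $s$ only along the fixed arcs recording that some literal of $C_j$ is true; since each literal occurs in two clauses, its two occurrence arcs are used to deliver one unit to each of those clauses when the literal is true, and nothing otherwise. A final waste sink absorbs the remaining flow forced onto the fixed arcs, its demand chosen so that the scenario's balances sum to zero. The second scenario is then feasible if and only if every clause sink can be reached, i.e. if and only if every clause contains a true literal.

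Combining both scenarios, a robust $\boldsymbol{b}$-flow exists if and only if there is a Boolean assignment realisable by the first scenario (every assignment) that is also accepted by the second scenario (a satisfying assignment); hence a robust flow exists iff the $(3,B2)$-\textsc{Sat} instance is satisfiable. For the forward direction I would write down the two scenario flows explicitly from a satisfying assignment; for the converse I would read the assignment off the loads of the fixed arcs and invoke the two structural facts above. I expect the main obstacle to be the joint design of the gadgets: on one and the same acyclic, single-source, small-capacity digraph, the first scenario must pin the fixed-arc loads to \emph{exactly} the consistent $\{0,1\}$ assignments with no spurious feasible load vectors, while the second scenario must be feasible for exactly the satisfying assignments; verifying that neither scenario admits an unintended flow — in particular that a literal's two occurrence arcs really are forced equal and really do broadcast one unit each to the literal's two clauses — is the delicate part. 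Merging all sinks into a single unique sink, as in the strengthened statement that follows, would require an additional argument but is not needed here.
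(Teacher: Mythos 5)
Your overall architecture coincides with the paper's: membership in $\mathcal{NP}$ via the flow itself as certificate, a reduction from $(3,B2)$-\textsc{Sat} with two scenarios, fixed ``literal'' arcs that transport a truth assignment from an assignment scenario to a verification scenario, clause sinks of demand one reachable only through the literal arcs of their own literals, and a waste sink balancing the surplus. However, as a proof the proposal has a genuine gap, and you name it yourself: the variable gadget is never constructed. Everything you list as a proof obligation --- that the first-scenario flow admits exactly two routings per variable, that the two occurrence arcs of a literal are forced to carry equal load, that a literal and its negation cannot both be loaded, that no ``fractional-looking'' or mixed load vector sneaks in, and that in the second scenario each loaded occurrence arc can deliver one unit to its clause or to the waste sink --- is precisely the content of the reduction, and asserting that gadgets ``will be arranged'' to have these properties is not a proof that such gadgets exist on an acyclic, single-source network. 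Since you explicitly defer ``the delicate part,'' the equivalence ``robust flow exists iff the formula is satisfiable'' is not established.

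For comparison, the paper discharges these obligations with a very light construction that avoids most of the machinery you anticipate: each variable $x_i$ is represented by two parallel $v_i$--$v_{i+1}$ paths $p_i$ and $\overline{p}_i$ of length five, whose two middle arcs each are the fixed literal arcs; the first scenario sends a \emph{single} unit from $v_1$ to $v_{n+1}$, and integrality plus acyclicity alone force this unit to follow exactly one of $p_i,\overline{p}_i$ per variable, which already yields the $\{0,1\}$ loads, the equality of a literal's two occurrence arcs, and the exclusivity between $x_i$ and $\overline{x}_i$ --- no extra exclusion gadgets are needed. The second scenario supplies $2n$ units at $v_1$ with access arcs $(v_1,w^{2k-1}_i)$, clause arcs $(w^{2k}_i,u_j)$, and escape arcs $(w^{2k}_i,t)$ to a waste sink of demand $2n-m$; since every unit must cross a fixed arc and only $2n$ of them are loaded (each with load one), each clause sink can only be served through a loaded literal arc of one of its literals, which is exactly satisfiability. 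If you complete your sketch, you will essentially be forced to reinvent such a gadget; until then the argument is a plan rather than a proof.
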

For the sake of clarity, we use the notation $[n]:=\{1,\ldots,n\}$ in the following.
\begin{proof}
The \ProblemName{} problem is contained in $\mathcal{NP}$ as we can check in polynomial time whether the flow balance, capacity, and consistent flow constraints are satisfied for every scenario. Further, we show that deciding whether a feasible solution of the \ProblemName{} problem exists is strongly $\mathcal{NP}$-complete by a reduction from the $(3,B2)$-\textsc{Sat} problem.  

Let $\mathcal{I}$ be a $(3,B2)$-\textsc{Sat} instance with the set of variables $\{x_1,\ldots,x_n\}$ and clauses $C_1,\ldots,C_m$ for which we construct a corresponding \ProblemName{} instance $\widetilde{\mathcal{I}}= (G, u, c, \boldsymbol{b})$ considering a set of two scenarios, i.e., $\Lambda=\{1,2\}$.
An example of a \ProblemName{} instance corresponding to a $(3,B2)$-\textsc{Sat} instance with four clauses and three variables is visualized in Figure~\ref{Reduction}.
\begin{figure}
	\begin{adjustbox}{max width=1\textwidth, max height=1\textheight}
		\centering\includegraphics{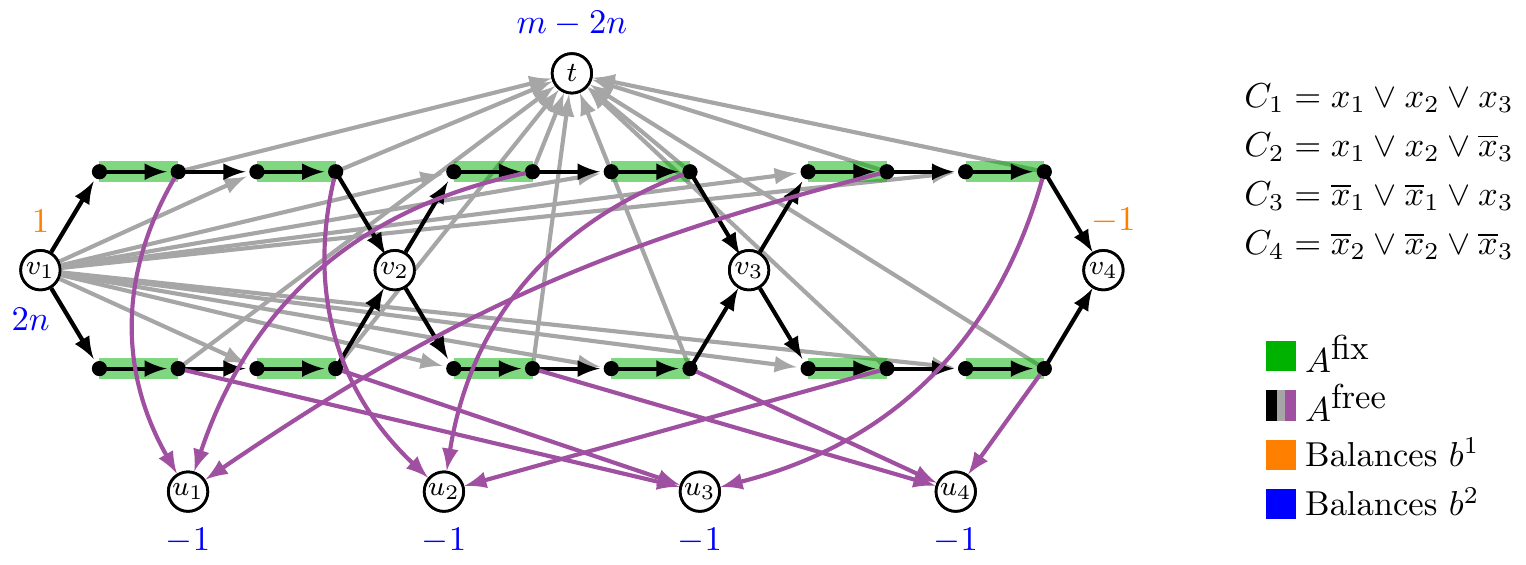}
	\end{adjustbox}
	\caption{Construction of the \ProblemName{} instance $\widetilde{\mathcal{I}}$}
	\label{Reduction}
\end{figure}
In general, the instance is based on a digraph $G=(V,A)$ defined as follows.
The vertex set $V$ consists of one vertex $v_i$ per variable $x_i$, $i\in[n]$, an additional dummy vertex $v_{n+1}$, and one vertex $u_j$ per clause $C_j$, $j\in[m]$. 
In addition, for every literal $x_i$ ($\overline{x}_i$), $i\in[n]$ four auxiliary vertices $w^\ell_i$ ($\overline{w}^\ell_i$), $\ell\in[4]$ are included as well as a further auxiliary vertex $t$.
Arc set $A$ includes arcs that connect two successive variable vertices $v_i$, $v_{i+1}$, $i\in[n]$ by two parallel paths $p_i$ and $\overline{p}_i$ defined along the auxiliary vertices, i.e., $p_i=v_i w^1_i w^2_i w^3_i w^4_i v_{i+1}$ and $\overline{p}_i=v_i\overline{w}^1_i \overline{w}^2_i \overline{w}^3_i \overline{w}^4_i v_{i+1}$ for $i\in[n]$. 
Path $p_i$ represents the positive literal $x_i$, and path $\overline{p}_i$ the negative literal $\overline{x}_i$ of instance $\mathcal{I}$.
As each literal occurs exactly twice in instance $\mathcal{I}$, we identify two arcs of paths $p_i$ and $\overline{p}_i$ each with the literals.
More precisely, let $x^k_i$ ($\overline{x}^k_i$) denote literal $x_i$ ($\overline{x}_i$) which occurs the $k$-th time, $k\in [2]$ in the formula. 
Arc $(w^{2k-1}_i,w^{2k}_i)$ ($(\overline{w}^{2k-1}_i,\overline{w}^{2k}_i)$), which we refer to as \textit{literal arc}, is supposed to correspond to literal $x^k_i$ ($\overline{x}^k_i$). 
Using this correspondence, we add arc $(w^{2k}_i, u_{j})$ ($(\overline{w}^{2k}_i, u_{j})$) for every literal $x^k_i$ ($\overline{x}^k_i$), $i\in[n]$, $k\in [2]$ included in clause $C_{j}$, $j\in[m]$. 
Finally, arcs $(v_1,w^\ell_i)$ and $(v_1,\overline{w}^\ell_i)$ for $\ell\in\{1,3\}$ as well as arcs $(w^\ell_i,t)$ and $(\overline{w}^\ell_i,t)$ for $\ell\in\{2,4\}$ are added for every $i\in[n]$.   

The fixed arcs of set $A$ are defined by all literal arcs, i.e., $A^\text{fix}= \big\{(w^\ell_i ,w^{\ell+1}_i), \linebreak[3] (\overline{w}^\ell_i , \overline{w}^{\ell+1}_i) \mid \ell\in \{1,3\},\ i\in [n]\big\},$ while all remaining arcs are contained in set $A^\text{free}$.
We set the capacity and cost to $u\equiv1$ and $c\equiv0$, respectively.
To conclude, we define balances $\boldsymbol{b}=(b^1,b^2)$ such that the unique source is given by vertex $v_1$. 
In contrast, depending on the scenario considered, vertex $v_{n+1}$, or vertices $u_1,\ldots,u_m$ and $t$ function as sinks. 
More precisely, we obtain
\begin{align*}
b^1(v)
= \left\{ \begin{array}{ll}
1 			& \mbox{if $v=v_1$}, \\ 
-1 			& \mbox{if $v=v_{n+1}$}, \\ 
0 			& \mbox{otherwise},  \end{array} \right.
& \ 
b^2(v)= \left\{ \begin{array}{ll}
2n 			& \mbox{if $v=v_1$}, \\ 
-1 			& \mbox{if $v=u_j, \ j\in [m]$}, \\
m-2n 	& \mbox{if $v=t$}, \\
0 			& \mbox{otherwise}.  \end{array} \right.
\end{align*}
In summary, we obtain a feasible \ProblemName{} instance $\widetilde{\mathcal{I}}= (G, u, c, \boldsymbol{b})$ that can be constructed in polynomial time. Hence, it remains to show that $\mathcal{I}$ is a Yes-instance if and only if for instance $\widetilde{\mathcal{I}}$ a robust $\boldsymbol{b}$-flow exists with cost at most $\beta:=0$.

For this purpose, let $x_1,\ldots,x_n$ be a satisfying truth assignment for instance $\mathcal{I}$. We define the first scenario flow $f^1$ of instance $\widetilde{\mathcal{I}}$ as follows
\begin{align*}
	f^1(a)=\left\{ \begin{array}{ll}
			 1 &\text{for all } a\in A(p_i)	 \mbox{ if $x_i=\textsc{True}$}, \\
			 1 &\text{for all } a\in A(\overline{p}_i)	 \mbox{ if $x_i=\textsc{False}$}, \\
				0  &\mbox{otherwise},  \end{array} \right.
\end{align*}
i.e., flow $f^1$ sends exactly one unit from source $v_1$ to sink $v_{n+1}$ using $2n$ literal arcs by sending flow along either path $p_i$ or $\overline{p}_i$, $i\in [n]$. 
As $x_1,\ldots,x_n$ is a satisfying truth assignment, there exists one verifying literal $x^k_i$ or $\overline{x}^k_i$, $k\in [2]$, $i\in [n]$ for each clause $C_j$, $j\in [m]$. 
We define the second scenario flow $f^2$ from the source to the clause vertices along the literal arcs which correspond to these verifying literals: 
\begin{align*}
f^2(a)=
\begin{cases}
1 		&\text{for all } a\in A(q) \text{ with }q=v_1w^{2k-1}_i w^{2k}_iu_{j} \mbox{ if $x^k_i\in C_{j}$ is verifying}, \\
1 		&\text{for all } a\in A(q) \text{ with }q=v_1\overline{w}^{2k-1}_i\overline{w}^{2k}_i u_{j} \mbox{ if $\overline{x}^k_i\in C_{j}$ is verifying},\\
0 		 &\mbox{otherwise}. 
\end{cases}
\end{align*}   
To satisfy the remaining $2n-m$ demand, flow $f^2$ is defined along the remaining, and also from flow $f^1$ used, literal arcs to sink $t$, i.e., $f^2(a)=1$ for all 
\begin{align*}
\left\{ \begin{array}{ll}
a\in A(p) \text{ with }p=v_1w^{2k-1}_iw^{2k}_i t \mbox{ if $x_i=\textsc{True}$ and $f^2((w^{2k-1}_i,w^{2k}))=0$}, \\
a\in A(p) \text{ with }p=v_1\overline{w}^{2k-1}_i\overline{w}^{2k}_i t	\mbox{ if $x_i=\textsc{False}$ and $f^2((\overline{w}^{2k-1}_i,\overline{w}^{2k}))=0$}.  \\
\end{array} \right.
\end{align*}
Overall, flow $f^2$ sends $2n-m$ units to sink $t$ and one unit to each of the sinks $u_1,\ldots,u_m$ using exactly $2n$ literal arcs. Consequently, we have constructed $b^\lambda$-flows $f^\lambda$ for both scenarios $\lambda\in \Lambda$ such that the consistent flow constraints are satisfied, and this results in a robust $\boldsymbol{b}$-flow $\boldsymbol{f}=(f^1,f^2)$ with cost $c(\boldsymbol{f})=0$. 

Conversely, let $\boldsymbol{f}=(f^1,f^2)$ be a robust $\boldsymbol{b}$-flow with at most zero cost.
Flow $f^2$ sends in total $2n$ units from vertex $v_1$ to vertices $u_1,\ldots,u_m$ and $t$. 
By construction of the network, the only option to reach each of these sinks requires the usage of at least one of the fixed literal arcs. 
Due to the integral flow $f^1$ sending only one unit within the acyclic digraph, it holds $f^1(a)= f^2(a)\in \{0,1\}$ for all fixed arcs $a\in A^\text{fix} $. 
Consequently, flows $f^1$ and $f^2$ use at least $2n$ fixed arcs in order to meet the demand of flow $f^2$.
As further consequence of the acyclic digraph, flow $f^1$ uses either path $p_i$ or $\overline{p}_i$, $i\in[n]$ but never both simultaneously to reach sink $v_{n+1}$.
Accordingly, if flow $f^1$ sends flow along path $p_i$, $i\in[n]$, we set $x_i=\textsc{True}$. 
If flow $f^1$ sends flow along path $\overline{p}_i$ for $i\in[n]$, our choice is $x_i=\textsc{False}$.  
Further, to meet the demand at sinks $u_j$, $j\in [m]$, flow $f^2$ sends flow along either subpath $w^\ell_iw^{\ell+1}_iu_{j}$ or $\overline{w}^\ell_i\overline{w}^{\ell+1}_iu_{j}$ for $\ell\in \{1,3\}$, $i\in [n]$, $j\in[m]$ but never both simultaneously.
In the former case, clause $C_{j}$ is verified due to the previous assignment $x_i=\textsc{True}$ induced by flow $f^1$ and the fact that $x_i\in C_{j}$ holds. 
In the latter case, clause $C_{j}$ is verified due to the included variable $x_i$ set to \textsc{False}. 
As a result, $x_1,\ldots,x_n$ is a satisfying truth assignment for instance $\mathcal{I}$.
\end{proof}
The statement of Theorem~\ref{Theorem:ReductionAcyclicGraphs} can be formulated even stronger as the following theorem shows.
\begin{theorem}\label{Theorem:ReductionAcyclicGraphs2}
The decision version of the \ProblemName{} problem for instances based on acyclic digraphs is strongly $\mathcal{NP}$-complete, even if only two scenarios on a network with a unique source and a unique sink are considered.
\end{theorem}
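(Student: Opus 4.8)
The plan is to adapt the reduction from the proof of Theorem~\ref{Theorem:ReductionAcyclicGraphs} by collapsing its scenario-dependent sinks into a single one. Given a $(3,B2)$-\textsc{Sat} instance $\mathcal I$, I would take the digraph $G$ built there, delete the vertex $v_{n+1}$ together with its two incoming arcs, and add one new vertex $z$ with the free, zero-cost arcs $(u_j,z)$ for $j\in[m]$ and $(t,z)$; all literal arcs stay in $\fix$ and every other arc stays in $\free$. I set all capacities to $1$ except on $(t,z)$, which receives capacity $2n-m$ (one may instead use $2n-m$ parallel unit arcs; recall that in a $(3,B2)$-\textsc{Sat} instance $3m=4n$, so $2n-m\ge 0$). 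Then I redefine the balances so that $v_1$ is the unique source and $z$ the unique sink:
\[
b^1(v_1)=1,\quad b^1(z)=-1,\quad b^2(v_1)=2n,\quad b^2(z)=-2n,
\]
and $0$ on every other vertex. The resulting instance $\widetilde{\mathcal I}$ is acyclic (place $z$ last in a topological order), has two scenarios, a unique source and a unique sink, and is constructed in polynomial time; $\mathcal{NP}$-membership is argued exactly as in Theorem~\ref{Theorem:ReductionAcyclicGraphs}, and the target cost is again $\beta:=0$ because $c\equiv 0$. It then remains to show that $\mathcal I$ is a Yes-instance iff $\widetilde{\mathcal I}$ admits a robust $\boldsymbol{b}$-flow of cost at most $0$.

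For the ``only if'' direction, from a satisfying assignment I would let $f^1$ be the single unit that runs through the gadget of each variable $x_i$ along $p_i$ if $x_i=\textsc{True}$ and along $\overline{p}_i$ otherwise, and that leaves the last gadget through $w^4_n\to t\to z$ (or its negated analogue); thus $f^1$ uses exactly the $2n$ literal arcs of the chosen polarities. For $f^2$ I would send one unit to each clause vertex $u_j$ along $v_1\,w^{2k-1}_i\,w^{2k}_i\,u_j\,z$ through a verifying literal occurrence of $C_j$ — whose literal arc is one of the $2n$ chosen arcs, since its polarity agrees with the assignment — using distinct occurrences for distinct clauses, and the remaining $2n-m$ units along $v_1\,w^{2k-1}_i\,w^{2k}_i\,t\,z$ through the remaining chosen literal arcs. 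Then $f^2$ has value $1$ on exactly the same $2n$ fixed arcs as $f^1$ and $0$ on all other fixed arcs, all capacities are respected, and $c(\boldsymbol{f})=0$.

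For the converse, let $\boldsymbol{f}=(f^1,f^2)$ be a robust flow of cost $0$. The argument runs parallel to Theorem~\ref{Theorem:ReductionAcyclicGraphs}. First, every $v_1$--$z$ path uses at least one literal arc, since the only in-neighbours of $z$ are the $u_j$ and $t$, and every in-neighbour of a $u_j$ or of $t$ is a vertex $w^\ell_i$ or $\overline{w}^\ell_i$ with $\ell\in\{2,4\}$, whose only incoming arc is the literal arc $(w^{\ell-1}_i,w^\ell_i)$ (resp.\ $(\overline{w}^{\ell-1}_i,\overline{w}^\ell_i)$). Decomposing $f^2$ into $2n$ unit $v_1$--$z$ paths therefore gives $\sum_{a\in\fix}f^2(a)\ge 2n$. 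Since $f^1$ is a single $v_1$--$z$ path $P$ and, by acyclicity, $P$ can use at most two literal arcs of each of the $n$ gadgets — using both arcs of one polarity in gadget $i$ forces $P$ to traverse $w^1_i,w^2_i,w^3_i,w^4_i$ (or their negated analogues) and then leave the gadget, after which the other polarity of gadget $i$ is unreachable — we get $\sum_{a\in\fix}f^1(a)=|A(P)\cap\fix|\le 2n$. With $f^1=f^2$ on $\fix$ this forces equality throughout, so $P$ uses exactly two literal arcs in each gadget, hence is a full traversal $v_1\to v_2\to\cdots\to v_n\to(\text{exit to }t\text{ or some }u_j)\to z$, and its per-gadget polarity choices define a truth assignment. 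Finally, $f^2$ has value $1$ on exactly those $2n$ fixed arcs, and since $\sum_{a\in\fix}f^2(a)=2n$ equals the number of units, each unit of $f^2$ uses exactly one literal arc; as the total capacity of the arcs entering $z$ is $m+(2n-m)=2n$, these arcs are all saturated, so exactly one unit enters each $u_j$ through an arc $(w^{2k}_i,u_j)$ whose literal arc is among the chosen $2n$, i.e.\ through a literal occurrence of $C_j$ that is true under the assignment. Hence every clause is satisfied.

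The step I expect to be the main obstacle is the rigidity argument inside the converse (inherited from Theorem~\ref{Theorem:ReductionAcyclicGraphs}): one must verify carefully, using acyclicity and the local structure of the gadgets, that the shortcut arcs $(v_1,w^{2k-1}_i)$ and the arcs $(w^{2k}_i,u_j),(w^\ell_i,t)$ cannot let the single unit $f^1$ accumulate more than two literal arcs per gadget, so that meeting the forced total of $2n$ leaves no freedom beyond the per-gadget polarity choice. The only genuinely new element compared with Theorem~\ref{Theorem:ReductionAcyclicGraphs} is that, with a unique sink, the per-clause demand must be replaced by the capacity argument at $z$; this is also the reason for deleting $v_{n+1}$ rather than merely linking it to $z$, since an arc $(v_{n+1},z)$ would create a surplus unit of $f^2$ forced through $v_{n+1}$, using a literal arc that another clause might require, which would make the forward direction infeasible.
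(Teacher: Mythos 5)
Your proposal is correct: the capacity/consistency bookkeeping in both directions checks out (every $v_1$--$z$ unit must cross a literal arc, the single unit $f^1$ can cross at most two literal arcs per variable gadget by acyclicity, the consistent flow constraints force exactly $2n$ crossings, and saturation of the arcs into $z$ forces one unit through every clause vertex along a literal arc whose polarity agrees with the assignment extracted from $f^1$). It follows the same overall strategy as the paper --- reuse the construction of Theorem~\ref{Theorem:ReductionAcyclicGraphs} and collapse the scenario-dependent sinks into a unique one --- but the concrete gadget differs. The paper keeps $v_{n+1}$ as the unique sink, adds free unit-capacity arcs $(u_j,v_{n+1})$, $j\in[m]$, and the shortcuts $(v_1,w^4_n)$, $(v_1,\overline{w}^4_n)$, and raises the second-scenario demand to $2n+2$, leaving the equivalence argument ``analogous''; you instead delete $v_{n+1}$, append a new sink $z$ fed by the clause vertices (unit capacity) and by $t$ (capacity $2n-m$, or parallel unit arcs), and keep the demands at $1$ and $2n$. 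What your route buys is that the forcing of one unit into every clause vertex comes from an explicit counting argument (in-capacity of $z$ equals $m+(2n-m)=2n$, the scenario-2 flow value), the surplus $2n-m$ units are absorbed through $t$ exactly as in Theorem~\ref{Theorem:ReductionAcyclicGraphs}, and no demand inflation or extra shortcut arcs are needed; moreover, your converse direction is worked out in full rather than sketched. In fact, your variant is arguably cleaner than the paper's: as literally stated there, the arcs entering the unique sink $v_{n+1}$ have total capacity $m+2<2n+2$ (since $3m=4n$), so the sketched second-scenario demand is not routable without further, unstated arcs (e.g.\ from $t$ to $v_{n+1}$); your construction sidesteps this by letting $t$ feed the sink with the exact residual capacity $2n-m$. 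The only cosmetic difference from the paper's setting is the one non-unit capacity on $(t,z)$, which you already note can be replaced by parallel unit arcs, and all numbers remain polynomially bounded, so strong $\mathcal{NP}$-completeness follows as claimed.
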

\begin{proof}
We extend the construction of proof of Theorem~\ref{Theorem:ReductionAcyclicGraphs} by free arcs $(u_j,v_{n+1})$ for $j\in [m]$ as well as the two free arcs $(v_1,w^4_n)$ and $(v_1,\overline{w}^4_n)$. 
Like all other arcs in the network, their capacities are set to one. 
The balances are updated such that $v_1$ serves as unique source and $v_{n+1}$ as unique sink. 
However, in the second scenario we  require $2n+2$ instead of $2n$ demand sent from the source to the sink.
This adjustment is necessary as we need to ensure that sufficient demand is sent along the clause vertices which are no sinks anymore.
Otherwise, there might exist a feasible robust $\boldsymbol{b}$-flow that sends a unit along path $v_1w^3_nw^4_nv_{n+1}$ or $v_1\overline{w}^3_n\overline{w}^4_nv_{n+1}$ which in turn allows one unsatisfied clause. 
Analogous to the proof of Theorem~\ref{Theorem:ReductionAcyclicGraphs}, a Yes-instance of a $(3,B2)$-\textsc{Sat} problem is equivalent to a Yes-instance of the \ProblemName{} problem.
\end{proof}

\section{\ProblemName{} Problem on Series-Parallel Digraphs}	
\label{Sec:SPDef}
In this section, we consider the \ProblemName{} problem on series-parallel (SP) digraphs.
We firstly propose a definition of SP digraphs and its representation in the form of a rooted binary decomposition tree. 
In Section~\ref{Subsec:SPComplexity+DP}, we show the weak $\mathcal{NP}$-completeness for networks with multiple sources and multiple sinks.
For the special case of networks with a unique source and a unique sink, we provide an algorithm which runs in polynomial time in Section~\ref{Subsec:SpecialCaseUniqueSourceUniqueSinkCapacities}.
\\\\
We start with a formal definition for SP digraphs based on the edge SP multi-graphs definition of Valdes et al.~(\citeyear{valdes1982recognition}).
\begin{definition}\label{Def:SeriesParallelGraphs}
\textit{Series-parallel (SP) digraphs} can be recursively defined as follows.
\begin{itemize}
\item[1.] An arc $(\sourceSPGraph,\sinkSPGraph)$ is an SP digraph with \textit{origin} $\sourceSPGraph$ and \textit{target} $\sinkSPGraph$.
\item[2.] Let $G_1$ with origin $\sourceSPGraph_1$ and target $\sinkSPGraph_1$ and $G_2$ with origin $\sourceSPGraph_2$ and target $\sinkSPGraph_2$ be SP digraphs. 
The digraph that is constructed by one of the following two compositions of SP digraphs $G_1$ and $G_2$ is itself an SP digraph.
\begin{itemize}
\item[a)] The \textit{series composition} $G$ of two SP digraphs $G_1$ and $G_2$ is the digraph obtained by contracting target $\sinkSPGraph_1$ and origin $\sourceSPGraph_2$. The origin of digraph $G$ is then $\sourceSPGraph_1$ (becoming $\sourceSPGraph$), and the target is $\sinkSPGraph_2$ (becoming $\sinkSPGraph$).
\item[b)] The \textit{parallel composition} $G$ of two SP digraphs $G_1$ and $G_2$ is the digraph obtained by contracting origins $\sourceSPGraph_1$ and $\sourceSPGraph_2$ (becoming $\sourceSPGraph$) and contracting targets $\sinkSPGraph_1$ and $\sinkSPGraph_2$ (becoming $\sinkSPGraph$). The new origin of digraph $G$ is $\sourceSPGraph$, and the target is $\sinkSPGraph$.
\end{itemize}
\end{itemize}
\end{definition}
The parallel and series compositions are illustrated in Figure~\ref{fig:Spgraphs}. 
Note that, by definition, SP digraphs are generally multi-graphs with one definite origin and one definite target. 

\begin{figure*}
	\begin{adjustbox}{max width=1\textwidth, max height=1\textheight}
		\subfloat[]{\label{fig:Spgraphs}	\includegraphics{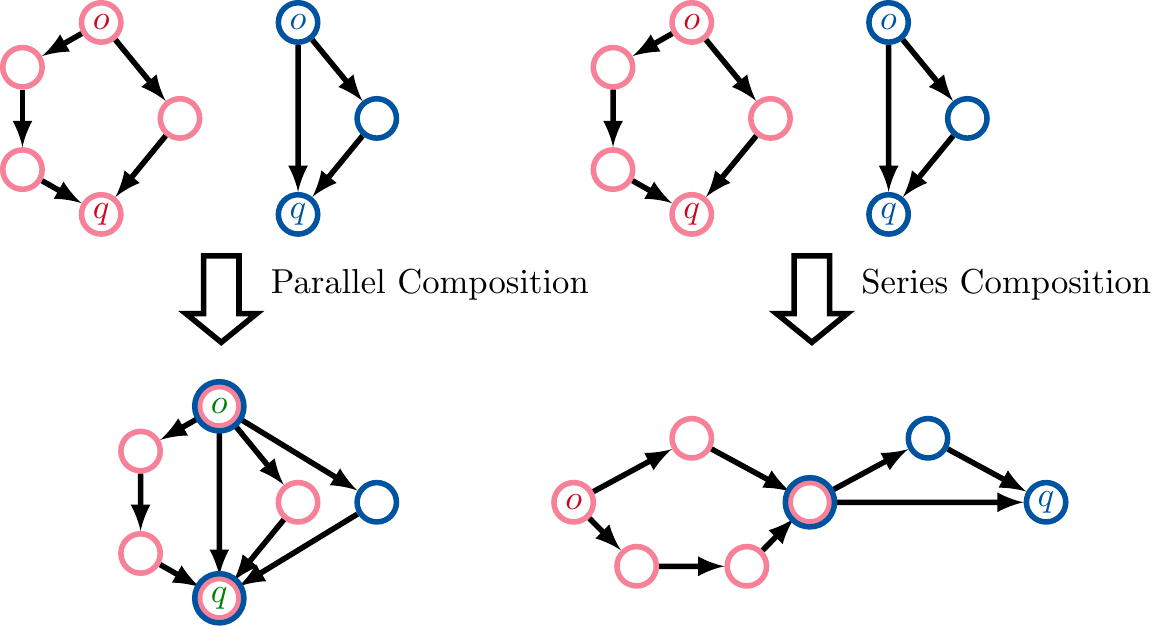}}
		\subfloat[]{\label{fig:Spgraphs2}	\includegraphics{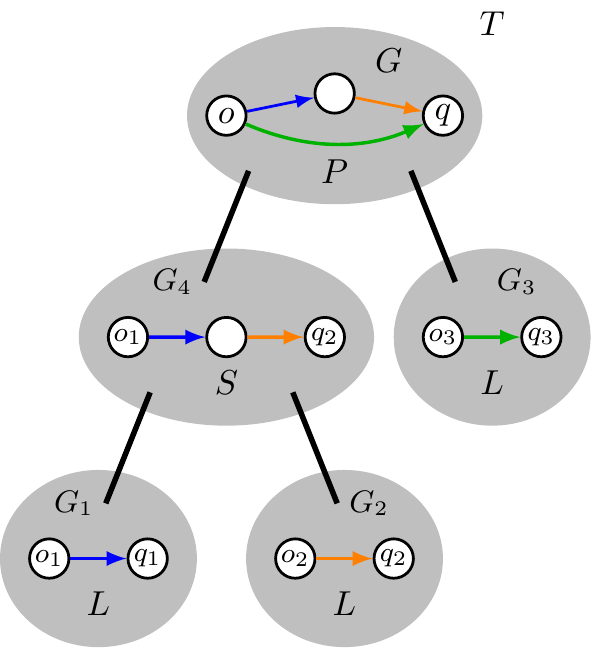}}
	\end{adjustbox}
\caption{(a) Example of an SP digraph defined by a parallel or series composition (b) Example of the representation of an SP digraph by its SP tree}
\end{figure*}
A useful property of SP digraphs is their representability in the form of a rooted binary decomposition tree, a so-called \textit{SP tree}, visualized in Figure~\ref{fig:Spgraphs2}. 
For a given SP digraph, we construct an SP tree that represents the order of the series and parallel compositions of individual arcs. 
By means of three different vertices, namely $L$-vertices, $S$-vertices, and $P$-vertices, single arcs as well as series and parallel compositions are indicated. 
The SP tree's leaves are $L$-vertices where there exist as many $L$-vertices as the digraph represented has arcs. 
The $S$- and $P$-vertices are the SP tree's inner vertices and correspond to the digraph obtained by a series or, respectively, parallel composition of the
subgraphs associated with their two child vertices. 
The order of the children of $P$-vertices is irrelevant while it is essential for $S$-vertices as the series composition is not commutative. 
Following the constructions of all series and parallel compositions, we obtain the entire digraph represented by the SP tree's root. 
The representation of an SP digraph by its SP tree can be beneficially used as the construction is conducted in polynomial time~\citep{Valdes:1979:RSP:800135.804393}.

\subsection{Multiple Sources and Multiple Sinks Networks}
\label{Subsec:SPComplexity+DP}
In this section, we firstly concentrate on the complexity of the \ProblemName{} problem for networks based on SP digraphs with a unique source and single sinks (but not a unique sink).
Afterwards, we conclude the complexity for networks with multiple sources and multiple sinks. 
We perform the reduction from \textsc{Partition} which is known to be weakly $\mathcal{NP}$-complete~\citep{johnson1979computers}. 
\begin{definition}[\textsc{Partition}]
Let $S=\{s_1,\ldots,s_n\}$ be a set of $n$ positive integers that sum up to $2w$, i.e., $\sum^{n}_{i=1}s_i=2w$. The decision problem of \textsc{Partition} asks whether there exists a partition of set $S$ in two disjoint subsets $S_1$ and $S_2$ such that the sum of the integers of subsets $S_1$ is equal to the sum of the integers included in subset $S_2$, i.e., $S=S_1\uplus S_2$ with
\begin{align*}
	\sum_{s_i\in S_1} s_i = \sum_{s_i\in S_2} s_i = w.
\end{align*} 
\end{definition}
\begin{theorem}
The decision version of the \ProblemName{} problem on networks based on SP digraphs with a unique source and single sinks is weakly $\mathcal{NP}$-complete. \label{Theorem:SPComplexity}
\end{theorem}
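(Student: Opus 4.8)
The plan is to follow the same two-step pattern as in the proof of Theorem~\ref{Theorem:ReductionAcyclicGraphs}. Membership in $\mathcal{NP}$ is immediate and does not use series-parallelism: a robust $\boldsymbol b$-flow is a certificate whose size is polynomial in the input (every flow value is bounded by a capacity and hence has polynomial bit-length), and the capacity, flow-balance and consistent-flow constraints together with the cost bound can be verified in polynomial time. For hardness I would give a polynomial-time reduction from \textsc{Partition}; reducing from \textsc{Partition} rather than from $(3,B2)$-\textsc{Sat} is precisely what lets the constructed instance carry genuinely numerical data and is the reason the hardness turns out to be only weak.

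Given a \textsc{Partition} instance $S=\{s_1,\dots,s_n\}$ with $\sum_{i=1}^n s_i = 2w$, I would build a \ProblemName{} instance $\widetilde{\mathcal I}=(G,u,c,\boldsymbol b)$ with two scenarios on a series-parallel digraph $G$ that has a unique source $o$ and two \emph{distinct} single sinks (one per scenario), together with a threshold $\beta$, such that a robust $\boldsymbol b$-flow of cost at most $\beta$ exists if and only if $S$ admits a balanced partition. The heart of $G$ is, for each element $i$, an \emph{element gadget} formed by a parallel composition of two internally vertex-disjoint paths between the same pair of vertices: a ``selection'' path containing a fixed arc $f_i$ of capacity $s_i$ whose common load $\ell_i := f^1(f_i)=f^2(f_i)$ is meant to encode whether $i$ lies in the first part of the partition, and a ``complement'' path made of free arcs. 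These $n$ gadgets are then wired together, by further series and parallel compositions, with a handful of small ``control'' gadgets — in particular arcs of capacity $w$ that, in each scenario, bound the amount of flow able to bypass the fixed arcs — and the balances are chosen so that scenario~$1$ must route exactly $w$ units from $o$ to its sink while scenario~$2$ must route a strictly larger amount to the other sink. Since $s_i$ enters only through arc capacities, $G$ and $\boldsymbol b$ have size polynomial in the binary encoding of $S$. The costs and $\beta$ are then chosen so that, exploiting the consistent-flow constraints on the arcs $f_i$, joint feasibility of the two scenario flows forces $\ell_i\in\{0,s_i\}$ for every $i$ and $\sum_i \ell_i = w$, i.e.\ makes $\{\,i : \ell_i = s_i\,\}$ one side of a balanced partition. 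Depending on how tightly the control gadgets bound the flow, one either works with $c\equiv 0$ and $\beta := 0$ exactly as in Theorem~\ref{Theorem:ReductionAcyclicGraphs} (feasibility alone already encoding \textsc{Partition}) or with a small $\beta$ depending on $w$ and the control costs.

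Establishing the equivalence then splits into the two usual directions. For the forward direction I would start from a balanced partition $S = S_1 \uplus S_2$, write the two scenario flows down explicitly — sending $s_i$ units along the selection path of gadget $i$ for $i\in S_1$ and along the complement path for $i\in S_2$, and spreading the remaining demand of scenario~$2$ over the control arcs — and check all capacity, flow-balance, consistent-flow and cost constraints, which is a routine calculation. For the converse I would take a robust flow of cost at most $\beta$, set $\ell_i := f^1(f_i)$, use the all-or-nothing structure of the element gadgets together with integrality of the flow to argue $\ell_i\in\{0,s_i\}$, and then read off $\sum_i \ell_i = w$ from flow conservation at the control vertices (where the capacity-$w$ arcs and the scenario demands leave no alternative), obtaining the required partition. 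Finally, a network with multiple sources and multiple sinks generalizes one with a unique source and single sinks, so the same construction also yields weak $\mathcal{NP}$-hardness in the multiple-sources/multiple-sinks setting; combined with the pseudo-polynomial dynamic program developed in the next subsection this gives weak $\mathcal{NP}$-completeness in both cases.

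The step I expect to be the real obstacle is the gadget design. The consistent-flow constraints cut both ways: a value that one scenario pins onto a fixed arc is forced onto that arc in \emph{every} scenario, so no single scenario may over-constrain the loads $\ell_i$, yet the two scenarios together must pin each $\ell_i$ down to $\{0,s_i\}$ rather than merely to $\{0,1,\dots,s_i\}$ — and all of this must be arranged with per-arc capacities only (the model has no arc lower bounds), while keeping $G$ genuinely series-parallel (no Wheatstone-bridge subgraph) and honoring the ``unique source, single but not unique sink'' restriction. Once a correct gadget is found, both directions of the proof become essentially book-keeping that follows the pattern of the proof of Theorem~\ref{Theorem:ReductionAcyclicGraphs}, with flows of value $w$ in place of single units.
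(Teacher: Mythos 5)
Your overall frame (membership in $\mathcal{NP}$ plus a polynomial reduction from \textsc{Partition}, with the numbers carried in the numerical data so the hardness is only weak) matches the paper, but the proposal has a genuine gap exactly where you flag it: the element gadget is never constructed, and that gadget is the entire mathematical content of the reduction. Your plan encodes $s_i$ as the \emph{capacity} of a fixed arc $f_i$ and asks the costs, the budget $\beta$ and some control gadgets to force the common load $\ell_i=f^1(f_i)=f^2(f_i)$ into $\{0,s_i\}$ with $\sum_i\ell_i=w$. With only upper capacities and per-unit linear costs this all-or-nothing forcing is not just bookkeeping: the cost of a load interpolates linearly between $0$ and the cost of the saturated arc, so a tight budget by itself never distinguishes $\ell_i\in\{0,s_i\}$ from intermediate integral values such as $\ell_i=\lfloor s_i/2\rfloor$, and the model offers no lower bounds to exclude them. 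Any robust flow whose loads sum to $w$ but do not respect the element boundaries would satisfy all the constraints you describe, so the converse direction (``read off a partition from a cheap robust flow'') does not go through as sketched; some genuinely new mechanism would be needed, and none is given.

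The paper's proof of Theorem~\ref{Theorem:SPComplexity} avoids this obstruction by putting the integers into the \emph{costs} and keeping all capacities equal to one. Each $s_i$ becomes a triple of parallel arcs $a_i^1,a_i^2,a_i^3$ from $v_{i-1}$ to $v_i$ with costs $2s_i$, $s_i$, $0$, where only $a_i^2$ is fixed; scenario $1$ sends two units from $v_0$ to $v_n$, scenario $2$ sends one unit from $v_0$ to a separate sink $t$ through an extra free arc of cost $2w$, and the budget is $\beta=3w$. Because capacities are one, every fixed arc carries load $0$ or $1$, so ``all or nothing'' is automatic; the tight budget forces scenario $1$ to load fixed arcs of total cost exactly $w$ (it cannot afford the cost-$2s_i$ arcs for everything, nor route both units on the free zero-cost path), and the consistent flow constraints transfer precisely this selection to scenario $2$, whose residual budget $w$ after paying $2w$ for the last arc pins the selected set down to a balanced partition. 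So the correct fix for your proposal is not a cleverer capacity gadget but switching the role of the numbers from capacities to costs, after which both directions reduce to the routine verification you describe.
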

\begin{proof}
	Let $\mathcal{I}$ be a \textsc{Partition} instance with positive integers $s_1,\ldots,s_n$ such that $\sum^{n}_{i=1}s_i=2w$ holds. 
	We construct a corresponding \ProblemName{} instance $\widetilde{\mathcal{I}}= (G, u, c, \boldsymbol{b})$ considering a set of two scenarios, i.e., $\Lambda=\{1,2\}$, visualized in Figure~\ref{ReductionSeriesParallelGraphsPartition}.
	\begin{figure}
		\begin{adjustbox}{max width=\textwidth}
			\includegraphics{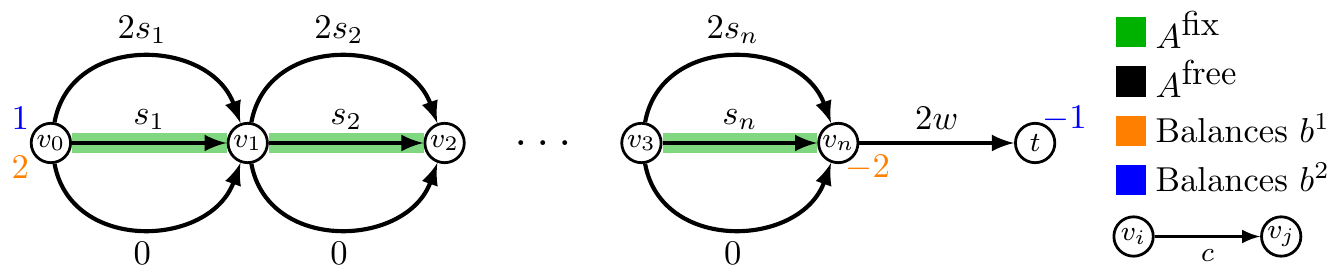}
		\end{adjustbox}
		\caption{Construction of the \ProblemName{} instance $\widetilde{\mathcal{I}}$}
		\label{ReductionSeriesParallelGraphsPartition}
	\end{figure}
	The network is based on an SP digraph $G=(V,A)$ where vertex set $V$ consists of two auxiliary vertices $v_0$ and $t$, and one vertex $v_i$ per integer $s_i$, $i\in[n]$.
	Arc set $A$ consists of multi-arcs $a_{i}$ for $i\in[n]$ that connects two successive vertices $v_{i-1}$, $v_{i}$ by three parallel arcs $a^1_{i}, a^2_{i},a^3_{i}$, plus a single arc $a_{n+1}$ from vertex $v_{n}$ to vertex $t$. 
	Multi-arc $a_i$, $i\in [n]$ is supposed to represent integer $s_i$, which is why we refer to as \textit{integer multi-arcs}.
	The fixed arcs of set $A$ are defined by the second arc of all integer multi-arcs each, i.e., $A^\text{fix}= \{a^2_i=(v_{i-1},v_{i}) \mid i\in [n]\}$, while all remaining arcs are contained in set $\free$.

	Further, we set the capacity on all arcs to one, i.e., $u\equiv1$.
	The cost $c$ is given such that the use of the first arc $a^1_i$ and second arc $a^2_i$ of every integer multi-arc costs two and one times the corresponding integer value $s_i$ per flow unit, respectively. 
	In turn, using the third arc $a^3_i$ causes zero cost.
	The use of arc $a_{n+1}$ costs $2w$ per flow unit. 
	To conclude, we define balances $\boldsymbol{b}=(b^1,b^2)$ on vertex set $V$ such that in the first scenario vertex $v_0$ supplies and vertex $v_n$ demands two units. 
	In the second scenario, vertex $v_0$ supplies and vertex $t$ demands one unit. 
	In both scenarios, the balances of all other vertices are equal to zero, i.e., overall we obtain
	\begin{align*}
	b^1(v)= \left\{ \begin{array}{ll}
	2 & \mbox{if $v=v_0$}, \\ 
	-2 & \mbox{if $v=v_n$}, \\ 
	0 & \mbox{otherwise},  \end{array} \right.
	\
	& b^2(v)= \left\{ \begin{array}{ll}
	1 & \mbox{if $v=v_0$}, \\ 
	-1 & \mbox{if $v=t$}, \\
	0 & \mbox{otherwise}.  \end{array} \right.
	\end{align*}
	Accordingly, for both scenarios the unique source is defined by vertex $v_0$ while depending on the scenario considered vertex $v_n$ or $t$ serves as single sink. 
	In order to satisfy demand with supply, flow is sent along paths through the network.
	For convenience, let $p^\ell$ denote the path along the $\ell$-th integer multi-arcs for $\ell\in [3]$, i.e., $p^\ell=a^\ell_1\ldots a^\ell_{n}$.
	Overall, we obtain a feasible \ProblemName{} instance $\widetilde{\mathcal{I}}= (G, u, c, \boldsymbol{b})$ that can be constructed in polynomial time. 
	Hence, it remains to show that $\mathcal{I}$ is a Yes-instance if and only if for instance $\widetilde{\mathcal{I}}$ a robust $\boldsymbol{b}$-flow exists with cost of at most $\beta:=3w$.
	
	For this purpose, let $S_1$ and $S_2$ be a feasible partition for instance $\mathcal{I}$.
	We define the first scenario flow $f^1$ of instance $\widetilde{\mathcal{I}}$ by 
	\begin{align*}
	f^1(a)= \left\{ \begin{array}{ll}
	1 & \text{for all arcs }a=a^1_i\in A	 \mbox{ if $s_i\in S_1$}, \\
	1 & \text{for all arcs }a=a^2_i\in A	 \mbox{ if $s_i\in S_2$}, 	\\
	1 & \text{for all arcs }a=a^3_i\in A, 	\\
	0 & \text{otherwise}, \end{array} \right.
	\end{align*}
	i.e., flow $f^1$ sends one unit from source $v_0$ to sink $v_n$ along arcs of paths $p^1$ and $p^2$, while one further unit is sent using arcs of path $p^3$ only. 
	As the sets $S_1$ and $S_2$ form a feasible partition we obtain cost of 
	\begin{align*}
	c(f^1)	
	&= \sum_{a\in A} c(a) f^1(a)= \sum_{a\in \fix} c(a) f^1(a) + \sum_{a\in \free}  c(a) f^1(a)   =w+2w =3w.
	\end{align*}
	According to flow $f^1$ and the partition, we define the second scenario flow $f^2$ by
	\begin{align*}
	f^2(a)= \left\{ \begin{array}{ll}
	1 & \text{for all arcs }a=a^2_i\in A	 \mbox{ if $s_i\in S_2$}, \\
	1 & \text{for all arcs }a=a^3_i\in A	 \mbox{ if $s_i\in S_1$}, 	\\
	1 & \text{for arc }a=a_{n+1}\in A, 	\\
	0 & \text{otherwise}, \end{array} \right.
	\end{align*}
	i.e., flow $f^2$ sends exactly one unit from source $v_0$ to sink $t$ along arcs of paths $p^2$ and $p^3$, and by using arc $a_{n+1}$. 
	The following cost is caused
	\begin{align*}
	c(f^2)	
	&= \sum_{a\in A} c(a) f^2(a)\\
	&= \sum_{a\in A^\text{fix}} c(a) f^2(a) + \sum_{a\in A^\text{free}\setminus\{a_{n+1}\}}  c(a) f^2(a) + c(a_{n+1}) f^2(a_{n+1})   \\
	&=w+0+2w =3w.
	\end{align*}
	Consequently, we have constructed a robust $\boldsymbol{b}$-flow $\boldsymbol{f}=(f^1,f^2)$ with cost of $3w$. 
		
	Conversely, let $\boldsymbol{f}=(f^1,f^2)$ be a robust $\boldsymbol{b}$-flow with cost of at most $3w$, i.e., $ c(\boldsymbol{f})=\max \{c(f^1), c(f^2)\}\leq 3w.$
	The first scenario flow $f^1$ sends two units from source $v_0$ to sink $v_n$. 
	Due to the capacities, not only a single path is used to send these flow units. 
	In particular, not only path $p^3$ causing zero cost is used. 
	As sending one flow unit along path $p^1$ would cause cost of 
	$$c(p^1)= \sum_{a\in A(p^1)}c(a)=\sum^n_{i=1}2 s_i = 4w > 3w=\max\{c(f^1), c(f^2)\},$$ 
	flow $f^1$ does not use all arcs of path $p^1$ either. 
	Accordingly, flow $f^1$ uses as many arcs of path $p^2$ as at least cost of $w$ is caused in order that at most $2w$ cost is caused due to arcs of path $p^1$.  
	
	The second scenario flow $f^2$ sends one unit from source $v_0$ to sink $t$. 
	As flow $f^2$ uses arc $a_{n}$ with cost of $2w$ to reach sink $t$, the unit is sent along arcs of paths $p^1$, $p^2$, $p^3$ such that at most cost of $w$ is caused.
	Furthermore, as flow $f^1$ uses as many arcs of path $p^2$ as at least cost of $w$ is caused, this also holds true for flow $f^2$ as $A(p^2)=\fix$ holds.
	Consequently, flow $f^2$ only uses arcs of path $p^2$ and $p^3$, however, due to the acyclic SP digraph never of the same multi-arc simultaneously such that the sets
	\begin{align*}
	S_1&:=\{s_i\mid f^2(a^2_i)=1 \text{ for arc }a^2_i\in A^\text{fix} \text{ with }i\in [n]\},\\
	S_2&:=\{s_i\mid f^2(a^3_i)=1 \text{ for arc }a^3_i\in A(p^3)\text{ with }i\in[n]\}
	\end{align*} form a feasible partition for instance $\mathcal{I}$. 
\end{proof}
In the next step, we refute the strong $\mathcal{NP}$-completeness.
Therefore, we propose a pseudo-polynomial algorithm based on dynamic programming.
The dynamic program (DP) is applicable for networks with an arbitrary number of sources and sinks, especially for multiple sources and multiple sinks.  
The core idea of the DP is a bottom-up method using the SP tree.
While composing the SP digraph step by step, in each of these steps a robust flow is sought satisfying additional restrictions explained in the following.
The flow needs to send a given supply from the origin through the subgraph considered in the current step. Further, the flow needs to satisfy the inner vertices' balances are satisfied as their in- and outgoing arcs are already set.
In contrast, the balances at the origin and target do not have to be satisfied as in subsequent steps further subgraphs can still be composed at these vertices.
Moreover, the  flow must exactly meet a budget given.
Backtracking the steps of the DP results to an optimal robust flow. 
Before we present the DP in more detail, we introduce the notations and labels needed.

Let us consider a \ProblemName{} instance $(G,u,c,\boldsymbol{b})$ where $G$ is an SP digraph with origin $\sourceSPGraph$ and target $\sinkSPGraph$.
Further, let $T$ be the SP tree of digraph $G$ with its root vertex $r\in V(T)$. 
We denote the subgraph of digraph $G$ that is associated to vertex $v\in V(T)$ by $G_v$, and its origin and target by $\sourceSPGraph_v$ and $\sinkSPGraph_v$, respectively.
The algorithm relies on demand labels $d_v(\boldsymbol{\balanceDynPro_{v}},\boldsymbol{\costDynPro_v})$ defined for every subgraph $G_v$ associated with a vertex $v\in V(T)$. 
The parameter vector $\boldsymbol{\balanceDynPro_{v}}=(\balanceDynPro^1_{v}, \ldots,\balanceDynPro_{v}^{|\Lambda|})\in \mathbb{Z}^{|\Lambda|}_{\geq 0}$ determines for all scenarios the supply at origin $o_v$ of subgraph $G_v$. 
For every scenario $\lambda\in \Lambda$ the supply $\balanceDynPro^\lambda_{v}$ is limited by the sum of the capacities of all outgoing arcs of origin $o_v$ of subgraph $G_v$, i.e., $\balanceDynPro^\lambda_{v}\in \{0,\ldots, U_{v}\}$ with $U_{v}=\sum_{ a=(\sourceSPGraph_v,w)\in A(G_v)} u(a)$. 
The parameter vector $\boldsymbol{\costDynPro_v}=(\costDynPro_{v}^1,\ldots,\costDynPro_{v}^{|\Lambda|})\in \mathbb{Z}^{|\Lambda|}_{\geq 0}$ specifies for all scenarios the budget that must be spent for sending the supply in subgraph $G_v$ with respect to cost function $c$. 
Consequently, an upper bound on the budget is given by the cost that may occur in subgraph $G_v$, i.e., $\costDynPro^\lambda_{v}\in \{0,\ldots, C_{v}\}$ for $\lambda\in \Lambda$ with $C_{v}=\sum_{a\in A(G_v)} c(a)\cdot u(a)$. 

Let the \textit{$(\boldsymbol{\balanceDynPro_{v}}$, $\boldsymbol{\costDynPro_v})$-restricted robust minimum cost flow problem under consistent flow constraints} (\RestrictedProblemName{v}) be defined as the \ProblemName{} problem on subgraph $G_v$, $v\in V(T)$ with restrictions implied by supply $\boldsymbol{\balanceDynPro_{v}}$ and budget $\boldsymbol{\costDynPro_v}$.
The demand label $d_v(\boldsymbol{\balanceDynPro_{v}},\boldsymbol{\costDynPro_v})$ is defined as the optimal solution value of the \RestrictedProblemName{v} problem.
For convenience and for the sake of clarity, we indicate the \RestrictedProblemName{v} problem by the following integer program formulation.
\begin{align}
d_v(\boldsymbol{\balanceDynPro_{v}}, \boldsymbol{\costDynPro_v}) \nonumber 
\\ = \min \ \ &0   \label{DPZF}\\ 
\text{ s.t. } 
& \sum_{a\in A(G_v)} c(a)\cdot f^\lambda_a= \tilde{c}^\lambda_v &&\forall  \lambda \in \Lambda  \label{DPConstr1}\\
& \sum_{a=(w,z) \in A(G_v)}f^\lambda_a  - \sum_{a=(z,w) \in A(G_v)}f^\lambda_a \nonumber
\\&= 
\left\{ \begin{array}{ll}
b^\lambda(w) & \mbox{if $w\neq \sourceSPGraph_v$,}   \\ 
\balanceDynPro^\lambda_w & \mbox{if $w= \sourceSPGraph_v$},  \end{array} \right. 
&& \forall w\in V(G_v)\setminus\{\sinkSPGraph_v\},\lambda\in \Lambda \label{DPConstr2}\\
& f^\lambda_a=f^{\lambda^\prime}_a && \forall a \in A^{\text{fix}}(G_v), \lambda, \lambda^\prime\in \Lambda \label{DPConstr3}\\
& 0\leq f^\lambda_a \leq u(a) && \forall a \in A(G_v), \lambda\in \Lambda \label{DPConstr4}\\
& f^\lambda_a \in \mathbb{Z}_{\geq 0} && \forall a \in A(G_v), \lambda\in \Lambda \label{DPConstr5}
\end{align}
The \RestrictedProblemName{v} problem requires a robust $\boldsymbol{b}$-flow in subgraph $G_v$ by means of constraints \eqref{DPConstr2}-\eqref{DPConstr5}.
Therefore, the flow needs to satisfy the supply $\boldsymbol{\balanceDynPro_{v}}$ at origin $o_v$, and the balances $\boldsymbol{b}$ at all other vertices except target $\sinkSPGraph_v$.
Furthermore, the flow must exactly meet the budget $\boldsymbol{\costDynPro_v}$, see constraint \eqref{DPConstr1}. 
By definition of the objective function~\eqref{DPZF}, finding a feasible solution is sufficient to solve the \RestrictedProblemName{v} problem, i.e., $d_v(\boldsymbol{\balanceDynPro_{v}}, \boldsymbol{\costDynPro_v})\in \{0,\infty\}$.\\
 
For solving the \ProblemName{} problem on SP digraphs, the DP exploits the structure of the SP tree to compute demand labels recursively. 
More precisely, considering a specific vertex in the SP tree, we update the corresponding demand label based on the labels corresponding to the children's vertices in a bottom-up procedure. 
Depending on whether the SP tree's vertex considered is an $L$-, $S$-, or $P$-vertex, one of the following three procedures is applied.
We start with the initialization at the leaves.
 
\begin{lemma}\label{LemmaDPInitalisierung}
Let $v\in V(T)$ be a leaf of SP tree $T$, i.e., $v$ is an $L$-vertex. 
The demand label $d_v(\boldsymbol{\balanceDynPro_{v}}, \boldsymbol{\costDynPro_{v}})$ is initialized by
\begin{align*}
d_v(\boldsymbol{\balanceDynPro_{v}}, \boldsymbol{\costDynPro_v})
= \left\{ \begin{array}{ll}
0 & \mbox{if $(\sourceSPGraph_v,\sinkSPGraph_v)\in \free(G_v)$, $\costDynPro^\lambda_v =c((\sourceSPGraph_v, \sinkSPGraph_v))\cdot \balanceDynPro_{v}^\lambda$, $\forall\lambda\in \Lambda$}, \\ 
0 & \mbox{if $(\sourceSPGraph_v,\sinkSPGraph_v)\in \fix(G_v)$, $\balanceDynPro^\lambda_{v}= \balanceDynPro^{\lambda^\prime}_{v}$, $\costDynPro^\lambda_v = c((\sourceSPGraph_v, \sinkSPGraph_v))\cdot \balanceDynPro_{v}^\lambda$, $ \forall\lambda, \lambda^\prime\in \Lambda$}, \\ 
\infty & \mbox{otherwise}.  \end{array} \right.
\end{align*}
\end{lemma}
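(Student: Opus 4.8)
The plan is to exploit the fact that for an $L$-vertex $v$ the subgraph $G_v$ is a single arc $a=(\sourceSPGraph_v,\sinkSPGraph_v)$, so the \RestrictedProblemName{v} problem has exactly one flow variable $f^\lambda_a$ per scenario and its feasibility can be read off directly. First I would observe that the flow balance constraint \eqref{DPConstr2}, applied at the unique non-target vertex $\sourceSPGraph_v$ (which has $a$ as its only outgoing arc and no incoming arc in $G_v$), forces $f^\lambda_a=\balanceDynPro^\lambda_v$ for every $\lambda\in\Lambda$. Hence no freedom is left in the flow itself, and it remains only to test the remaining constraints against this forced value.

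Next I would note that the capacity and integrality constraints \eqref{DPConstr4}--\eqref{DPConstr5} hold automatically: by the definition of the demand labels one has $\balanceDynPro^\lambda_v\in\{0,\dots,U_v\}$ with $U_v=u(a)$ for a leaf, so $0\le f^\lambda_a=\balanceDynPro^\lambda_v\le u(a)$ and $f^\lambda_a\in\mathbb{Z}_{\geq 0}$ are satisfied by construction. Consequently the system \eqref{DPZF}--\eqref{DPConstr5} is feasible if and only if the budget constraint \eqref{DPConstr1} holds, i.e.\ $\costDynPro^\lambda_v=c(a)\cdot\balanceDynPro^\lambda_v$ for all $\lambda\in\Lambda$, and, in the case $a\in\fix(G_v)$, additionally the consistent flow constraint \eqref{DPConstr3}, which here reduces to $\balanceDynPro^\lambda_v=\balanceDynPro^{\lambda^\prime}_v$ for all $\lambda,\lambda^\prime\in\Lambda$.

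Assembling these observations yields the stated case distinction: if $a\in\free(G_v)$ and the budget identity holds in every scenario, the forced flow is feasible and, since the objective \eqref{DPZF} is identically zero, $d_v(\boldsymbol{\balanceDynPro_v},\boldsymbol{\costDynPro_v})=0$; if $a\in\fix(G_v)$, one additionally needs the supplies to agree across scenarios, which gives the second branch with value $0$; in every other configuration the system is infeasible, so $d_v(\boldsymbol{\balanceDynPro_v},\boldsymbol{\costDynPro_v})=\infty$, consistent with the already established fact that this label lies in $\{0,\infty\}$. I do not expect a genuine technical obstacle here; the only point that needs care is checking that the ``otherwise'' branch is precisely the complement of the two feasible cases, that is, that infeasibility arises exactly from a budget mismatch in some scenario or from a fixed arc whose forced loads disagree across scenarios.
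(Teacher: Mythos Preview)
Your proposal is correct and follows essentially the same approach as the paper: both arguments use that $G_v$ is a single arc so the flow balance constraint \eqref{DPConstr2} forces $f^\lambda_a=\balanceDynPro^\lambda_v$, and then check that the budget constraint \eqref{DPConstr1} and, in the fixed-arc case, the consistency constraint \eqref{DPConstr3} are the only remaining feasibility conditions. Your treatment is slightly more explicit in verifying that the capacity and integrality constraints \eqref{DPConstr4}--\eqref{DPConstr5} are automatically satisfied by the range of $\boldsymbol{\balanceDynPro_v}$, which the paper leaves implicit.
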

\begin{proof}
As $v\in V(T)$ is an $L$-vertex, subgraph $G_v$ only consists of the single arc $a_v:=(\sourceSPGraph_v,\sinkSPGraph_v)$. 
If $a_v$ is a free arc, i.e., $a_v\in \free(G_v)$, $\costDynPro^\lambda_v =c((\sourceSPGraph_v, \sinkSPGraph_v))\cdot \balanceDynPro_{v}^\lambda$ must hold true. 
Otherwise, there exists no feasible flow that satisfies constraints~\eqref{DPConstr1} due to constraints~\eqref{DPConstr2}.
Consequently, the \RestrictedProblemName{v} problem is not solvable, i.e., $d_v(\boldsymbol{\balanceDynPro_{v}}, \boldsymbol{\costDynPro_{v}})=\infty$. 
If $a_v$ is a fixed arc, i.e., $a_v\in \fix(G_v)$, the constraints of the previous case need to be satisfied due to the same argumentation. In addition, $\balanceDynPro^\lambda_{v}= \balanceDynPro^{\lambda^\prime}_{v}$ must hold true for all scenarios $\lambda, \lambda^\prime\in \Lambda$ by reason of constraints~\eqref{DPConstr3}.
To conclude, if the presented constraints are satisfied, an optimal solution to the \RestrictedProblemName{v} problem is given by $\boldsymbol{f}(a_v):=\boldsymbol{\balanceDynPro_v}$ such that $d_v(\boldsymbol{\balanceDynPro_{v}}, \boldsymbol{\costDynPro_{v}})=0$ holds true.   
\end{proof}
In the next step, we consider the case in which the demand label is derived recursively from the demand labels of the child vertices that are parallelly composed.
\begin{lemma}\label{LemmaDPParalleleKomposition}
Let $v\in V(T)$ be a $P$-vertex in SP tree $T$ with child vertices $x,y\in V(T)$. The demand label $d_v(\boldsymbol{\balanceDynPro_{v}}, \boldsymbol{\costDynPro_{v}})$ at vertex $v$ can be computed by a composition of the demand labels $d_x(\boldsymbol{\balanceDynPro_{x}}, \boldsymbol{\costDynPro_x})$ and $d_y(\boldsymbol{\balanceDynPro_{y}}, \boldsymbol{\costDynPro_y}) $ of its child vertices $x$ and $y$ as follows
\begin{align*}
d_v(\boldsymbol{\balanceDynPro_{v}}, \boldsymbol{\costDynPro_{v}})
= \min\limits_{\substack{
		\boldsymbol{\balanceDynPro_{v}} =\boldsymbol{\balanceDynPro_{x}}	+ \boldsymbol{\balanceDynPro_{y}}\\
		\boldsymbol{\costDynPro_{v}}    =\boldsymbol{\costDynPro_{x}} 		+ \boldsymbol{\costDynPro_{y}}
}}
\left\{    
d_x(\boldsymbol{\balanceDynPro_{x}}, \boldsymbol{\costDynPro_x})  + d_y(\boldsymbol{\balanceDynPro_{y}}, \boldsymbol{\costDynPro_y})  
\right\}. 
\end{align*} 
\end{lemma}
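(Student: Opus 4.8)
The plan is to prove the two inequalities $\leq$ and $\geq$ separately, exploiting that a parallel composition at vertex $v$ means $G_v$ is obtained by identifying the origins $o_x=o_y=o_v$ and targets $q_x=q_y=q_v$, so that $A(G_v)=A(G_x)\,\dot\cup\,A(G_y)$ is a disjoint union and every vertex of $G_v$ other than $o_v$ and $q_v$ lies in exactly one of the two subgraphs. This structural fact is the engine: any robust $\boldsymbol{b}$-flow in $G_v$ restricts to a pair of flows in $G_x$ and $G_y$, and conversely any compatible pair glues back together.

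\medskip
\noindent\textbf{The direction ``$\leq$''.} First I would take an optimal solution to the \RestrictedProblemName{x} problem (with data $\boldsymbol{\balanceDynPro_x},\boldsymbol{\costDynPro_x}$) and one to the \RestrictedProblemName{y} problem (with data $\boldsymbol{\balanceDynPro_y},\boldsymbol{\costDynPro_y}$) for any split $\boldsymbol{\balanceDynPro_v}=\boldsymbol{\balanceDynPro_x}+\boldsymbol{\balanceDynPro_y}$, $\boldsymbol{\costDynPro_v}=\boldsymbol{\costDynPro_x}+\boldsymbol{\costDynPro_y}$ realizing a finite minimum on the right-hand side. Define $f^\lambda_a$ on $A(G_v)$ to agree with the $G_x$-flow on $A(G_x)$ and with the $G_y$-flow on $A(G_y)$. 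One then checks the five constraint families: \eqref{DPConstr4} and \eqref{DPConstr5} are immediate arcwise; \eqref{DPConstr3} holds because $A^{\text{fix}}(G_v)$ splits into $A^{\text{fix}}(G_x)$ and $A^{\text{fix}}(G_y)$ and each piece already satisfies consistency; \eqref{DPConstr1} follows by summing the two budget equations, using $\boldsymbol{\costDynPro_v}=\boldsymbol{\costDynPro_x}+\boldsymbol{\costDynPro_y}$ and additivity of cost over the disjoint arc sets; and \eqref{DPConstr2} holds at every internal vertex of $G_x$ or of $G_y$ trivially (its incident arcs lie wholly in one subgraph), while at the shared origin $o_v$ the net outflow is the sum of the net outflows in $G_x$ and $G_y$, which is $\balanceDynPro^\lambda_x+\balanceDynPro^\lambda_y=\balanceDynPro^\lambda_v$, exactly as required. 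The balance at the excluded target $q_v$ need not be checked. Hence the glued flow is feasible for \RestrictedProblemName{v}, so $d_v(\boldsymbol{\balanceDynPro_v},\boldsymbol{\costDynPro_v})=0\le d_x(\cdots)+d_y(\cdots)$; since this holds for the minimizing split, $d_v\le\min\{\cdots\}$.

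\medskip
\noindent\textbf{The direction ``$\geq$''.} If the right-hand minimum is $\infty$ there is nothing to prove, so assume $d_v(\boldsymbol{\balanceDynPro_v},\boldsymbol{\costDynPro_v})=0$ and take a feasible robust flow $\boldsymbol{f}$ for \RestrictedProblemName{v}. Restrict it to $A(G_x)$ and to $A(G_y)$; set $\balanceDynPro^\lambda_x:=\sum_{a=(o_v,w)\in A(G_x)}f^\lambda_a$ and $\balanceDynPro^\lambda_y:=\sum_{a=(o_v,w)\in A(G_y)}f^\lambda_a$ to be the amounts leaving the origin into each half, and set $\costDynPro^\lambda_x:=\sum_{a\in A(G_x)}c(a)f^\lambda_a$, $\costDynPro^\lambda_y:=\sum_{a\in A(G_y)}c(a)f^\lambda_a$. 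By disjointness of the arc sets these satisfy $\boldsymbol{\balanceDynPro_x}+\boldsymbol{\balanceDynPro_y}=\boldsymbol{\balanceDynPro_v}$ and $\boldsymbol{\costDynPro_x}+\boldsymbol{\costDynPro_y}=\boldsymbol{\costDynPro_v}$, and the restricted flows satisfy \eqref{DPConstr1}--\eqref{DPConstr5} for \RestrictedProblemName{x} and \RestrictedProblemName{y} respectively (the flow-balance check at $o_x=o_v$ uses the definition of $\balanceDynPro^\lambda_x$; at internal vertices it is inherited verbatim; the targets are again excluded). The only subtlety is confirming that $\boldsymbol{\balanceDynPro_x}$ and $\boldsymbol{\balanceDynPro_y}$ lie in the admissible ranges $\{0,\dots,U_x\}$ and $\{0,\dots,U_y\}$ and similarly for the budgets — this follows since the restricted flow respects the capacities on the outgoing arcs of $o_v$ within each half, so $\balanceDynPro^\lambda_x\le U_x$, and likewise $\costDynPro^\lambda_x\le C_x$; hence this split is one of the pairs ranged over by the minimum. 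Therefore $d_x(\boldsymbol{\balanceDynPro_x},\boldsymbol{\costDynPro_x})=d_y(\boldsymbol{\balanceDynPro_y},\boldsymbol{\costDynPro_y})=0$ and the right-hand minimum is $\le 0=d_v(\boldsymbol{\balanceDynPro_v},\boldsymbol{\costDynPro_v})$.

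\medskip
\noindent\textbf{Main obstacle.} The substantive point is not any single calculation but getting the flow-conservation bookkeeping at the glued origin exactly right: one must be careful that the ``supply'' $\balanceDynPro^\lambda_v$ at $o_v$ in $G_v$ is the net outflow there, and that this net outflow splits additively across $G_x$ and $G_y$ precisely because no arc of $G_v$ joins an internal vertex of one half to the other — a consequence of the parallel composition identifying only the two terminals. Everything else (capacities, integrality, consistency on fixed arcs, additivity of cost) is routine given the disjoint-union structure of the arc set, so I would state the arc-partition observation explicitly at the outset and then let the two containments fall out.
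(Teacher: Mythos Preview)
Your proposal is correct and follows essentially the same approach as the paper: both arguments prove the two inequalities by restricting a feasible flow on $G_v$ to the disjoint arc sets $A(G_x)$ and $A(G_y)$ for one direction, and by gluing feasible flows on $G_x$ and $G_y$ for the other. Your write-up is in fact more careful than the paper's in a few places---you explicitly treat the case where the right-hand minimum is $\infty$, verify that the induced splits $\boldsymbol{\balanceDynPro_x},\boldsymbol{\balanceDynPro_y},\boldsymbol{\costDynPro_x},\boldsymbol{\costDynPro_y}$ fall in the admissible ranges $\{0,\dots,U_x\}$ etc., and spell out why flow conservation at the shared origin splits additively---whereas the paper leaves these as implicit.
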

\begin{proof}
For vertex $v\in V(T)$, let $d_v(\boldsymbol{\balanceDynPro_{v}}, \boldsymbol{\costDynPro_{v}})$ be the demand label with the related solution $\boldsymbol{f^*}$. 
As $v$ is a $P$-vertex, flow $\boldsymbol{f^*}$ with associated supply $\boldsymbol{\balanceDynPro_{v}}=\sum_{a=(\sourceSPGraph_v,w)\in A(G_v)} \boldsymbol{f^*}(a)$ can be divided into two flows $\boldsymbol{f_x}$ and $\boldsymbol{f_y}$ with associated supplies $\boldsymbol{\balanceDynPro_{x}}$ and $\boldsymbol{\balanceDynPro_{y}}$, respectively.
Flow $\boldsymbol{f_x}$ is defined on subgraph $G_x$, and flow $\boldsymbol{f_y}$ is defined on subgraph $G_y$ only. 
The budget $\boldsymbol{\costDynPro_{v}}=\sum_{a\in A(G_v)} c(a)\cdot \boldsymbol{f^*}(a)$ of flow $\boldsymbol{f^*}$ is also divided such that $\boldsymbol{\costDynPro_{x}}$ describes the budget of flow $\boldsymbol{f_x}$ and $\boldsymbol{\costDynPro_{y}}$ the budget of flow $\boldsymbol{f_y}$. 
Flows $\boldsymbol{f_x}$ and $\boldsymbol{f_y}$ are feasible solutions to the \RestrictedProblemName{x} and \RestrictedProblemName{y} problem, respectively.
Consequently, we obtain 
\begin{align*}
d_v(\boldsymbol{\balanceDynPro_{v}}, \boldsymbol{\costDynPro_{v}})
= 
d_v(\boldsymbol{\balanceDynPro_{x}}+\boldsymbol{\balanceDynPro_{y}}, \boldsymbol{\costDynPro_x}+\boldsymbol{\costDynPro_y})  
&\geq
d_x(\boldsymbol{\balanceDynPro_{x}}, \boldsymbol{\costDynPro_x})  + d_y(\boldsymbol{\balanceDynPro_{y}}, \boldsymbol{\costDynPro_y}),
\end{align*}
where $d_x(\boldsymbol{\balanceDynPro_{x}}, \boldsymbol{\costDynPro_{x}})$ and $d_y(\boldsymbol{\balanceDynPro_{y}}, \boldsymbol{\costDynPro_{y}})$ are the demand labels corresponding to the child vertices $x,y\in V(T)$.
In particular, this implies 
\begin{align*}
d_v(\boldsymbol{\balanceDynPro_{v}}, \boldsymbol{\costDynPro_{v}})
\geq
\min\limits_{\substack{
		\boldsymbol{\balanceDynPro_{v}} =\boldsymbol{\balanceDynPro_{x}}	+ \boldsymbol{\balanceDynPro_{y}}\\
		\boldsymbol{\costDynPro_{v}}    =\boldsymbol{\costDynPro_{x}} 		+ \boldsymbol{\costDynPro_{y}}
}}
\left\{    
d_x(\boldsymbol{\balanceDynPro_{x}}, \boldsymbol{\costDynPro_x})  + d_y(\boldsymbol{\balanceDynPro_{y}}, \boldsymbol{\costDynPro_y})  
\right\}.
\end{align*}

Conversely, for child vertices $x,y\in V(T)$, let $d_x(\boldsymbol{\balanceDynPro_{x}}, \boldsymbol{\costDynPro_{x}})$ and $d_y(\boldsymbol{\balanceDynPro_{y}}, \boldsymbol{\costDynPro_{y}})$ be the demand labels with related solutions $\boldsymbol{f^*_x}$ and $\boldsymbol{f^*_y}$. 
Combining flows $\boldsymbol{f^*_x}$ and $\boldsymbol{f^*_y}$ results in a feasible solution $\boldsymbol{f_{v}}:=\boldsymbol{f^*_x}+ \boldsymbol{f^*_y}$ to the \RestrictedProblemName{v} problem with supply $\boldsymbol{\balanceDynPro_v}:=\boldsymbol{\balanceDynPro_{x}}+\boldsymbol{\balanceDynPro_{y}}$ and budget $\boldsymbol{\costDynPro_{v}}:=\boldsymbol{\costDynPro_{x}}+\boldsymbol{\costDynPro_{y}}$. 
Consequently, for all supplies $\boldsymbol{\balanceDynPro_{x}}$, $\boldsymbol{\balanceDynPro_{y}}$ and budgets $\boldsymbol{\costDynPro_{x}}$, $\boldsymbol{\costDynPro_{y}}$ given the following holds true 
\begin{align*}
d_x(\boldsymbol{\balanceDynPro_{x}}, \boldsymbol{\costDynPro_x})  + d_y(\boldsymbol{\balanceDynPro_{y}}, \boldsymbol{\costDynPro_y})
\geq
 d_v(\boldsymbol{\balanceDynPro_{x}}+\boldsymbol{\balanceDynPro_{y}}, \boldsymbol{\costDynPro_x}+\boldsymbol{\costDynPro_y})
=d_v(\boldsymbol{\balanceDynPro_{v}}, \boldsymbol{\costDynPro_v}),
\end{align*}
where $d_v(\boldsymbol{\balanceDynPro_{v}}, \boldsymbol{\costDynPro_v})$ is the demand label corresponding to vertex $v\in V(T)$.
This implies
\begin{align*}
d_v(\boldsymbol{\balanceDynPro_{v}}, \boldsymbol{\costDynPro_v})  
\leq 
\min\limits_{\substack{
		\boldsymbol{\balanceDynPro_{v}} =\boldsymbol{\balanceDynPro_{x}}	+ \boldsymbol{\balanceDynPro_{y}}\\
		\boldsymbol{\costDynPro_{v}}    =\boldsymbol{\costDynPro_{x}} 		+ \boldsymbol{\costDynPro_{y}}
}}
\left\{    
d_x(\boldsymbol{\balanceDynPro_{x}}, \boldsymbol{\costDynPro_x})  + d_y(\boldsymbol{\balanceDynPro_{y}}, \boldsymbol{\costDynPro_y})  
\right\}.
\end{align*}
\end{proof}
To conclude the computation of demand labels, we consider the case in which a demand label is derived recursively from the demand labels of the child vertices that are serially composed.
\begin{lemma}\label{LemmaDPSerielleKomposition}
Let $v\in V(T)$ be an $S$-vertex in SP tree $T$ with child vertices $x,y\in V(T)$. 
The demand label $d_v(\boldsymbol{\balanceDynPro_{v}}, \boldsymbol{\costDynPro_{v}})$ at vertex $v$ can be computed by a composition of the demand labels $d_x(\boldsymbol{\balanceDynPro_{x}}, \boldsymbol{\costDynPro_x})$ and $d_y(\boldsymbol{\balanceDynPro_{y}}, \boldsymbol{\costDynPro_y}) $ of its child vertices $x$ and $y$ by
\begin{align*}
d_v(\boldsymbol{\balanceDynPro_{v}}, \boldsymbol{\costDynPro_{v}})
= \min\limits_{\substack{
		\boldsymbol{\balanceDynPro_{x}}= \boldsymbol{\balanceDynPro_{v}}\\
		\boldsymbol{\balanceDynPro_{y}}=  \boldsymbol{\balanceDynPro_{x}} + \boldsymbol{\beta} 	\\
		\boldsymbol{\costDynPro_{v}}   =\boldsymbol{\costDynPro_{x}} 		+ \boldsymbol{\costDynPro_{y}}
}}
\left\{    
d_x(\boldsymbol{\balanceDynPro_{x}}, \boldsymbol{\costDynPro_x})  + d_y(\boldsymbol{\balanceDynPro_{y}}, \boldsymbol{\costDynPro_y})  
\right\},
\end{align*}
where $\boldsymbol{\beta}=(\beta^1,\ldots,\beta^{|\Lambda|})$ with $ \beta^\lambda:= \sum_{v\in V(G_x)\setminus\{\sourceSPGraph_x\}}b^\lambda(v)$ holds for every scenario $\lambda\in \Lambda$. 
\end{lemma}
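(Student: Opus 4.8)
The plan is to follow the same two-inequality template as the proof of Lemma~\ref{LemmaDPParalleleKomposition}: prove ``$d_v \le \min$'' by gluing optimal solutions of the two child subproblems and ``$d_v \ge \min$'' by restricting an optimal solution of the \RestrictedProblemName{v} problem to $G_x$ and to $G_y$. The structural facts about the series composition I would use are $A(G_v)=A(G_x)\,\uplus\,A(G_y)$, $\sourceSPGraph_v=\sourceSPGraph_x$, $\sinkSPGraph_v=\sinkSPGraph_y$, and that the vertex $z$ obtained by contracting $\sinkSPGraph_x$ with $\sourceSPGraph_y$ is an \emph{inner} vertex of $G_v$. The genuine difference from the parallel case is the triple role of $z$: in the \RestrictedProblemName{v} problem its balance $b^\lambda(z)$ must be met exactly by~\eqref{DPConstr2}, whereas in the \RestrictedProblemName{x} problem the vertex $z=\sinkSPGraph_x$ is the unconstrained target and in the \RestrictedProblemName{y} problem the vertex $z=\sourceSPGraph_y$ is the origin carrying the prescribed supply $\balanceDynPro^\lambda_y$.

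First I would isolate the one flow-conservation computation that both directions rely on. Fix a scenario $\lambda\in\Lambda$ and any flow on $G_x$ obeying~\eqref{DPConstr2} for $G_x$ with origin supply $\balanceDynPro^\lambda_x$; summing those balance equations over $G_x$ shows that the net flow entering $z=\sinkSPGraph_x$ from inside $G_x$ equals $\balanceDynPro^\lambda_x+\sum_{w\in V(G_x)\setminus\{\sourceSPGraph_x,\sinkSPGraph_x\}}b^\lambda(w)$. Since $\beta^\lambda=\sum_{w\in V(G_x)\setminus\{\sourceSPGraph_x\}}b^\lambda(w)=b^\lambda(\sinkSPGraph_x)+\sum_{w\in V(G_x)\setminus\{\sourceSPGraph_x,\sinkSPGraph_x\}}b^\lambda(w)$, enforcing the balance $b^\lambda(z)=b^\lambda(\sinkSPGraph_x)$ at $z$ inside $G_v$ forces the flow that $G_y$ must emit at $z$ to be exactly $\balanceDynPro^\lambda_x+\beta^\lambda$; together with $\balanceDynPro^\lambda_x=\balanceDynPro^\lambda_v$ (immediate from $\sourceSPGraph_x=\sourceSPGraph_v$) this is precisely the index set of the stated minimum. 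I expect this bookkeeping to be the main obstacle: one must keep straight which vertices count as inner in which subgraph and whether $b^\lambda(\sinkSPGraph_x)$ has already been absorbed into $\boldsymbol{\balanceDynPro_x}$.

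With the computation in hand the two directions are routine. For ``$\le$'', take an admissible split (so $\boldsymbol{\balanceDynPro_x}=\boldsymbol{\balanceDynPro_v}$, $\boldsymbol{\balanceDynPro_y}=\boldsymbol{\balanceDynPro_x}+\boldsymbol{\beta}$, $\boldsymbol{\costDynPro_v}=\boldsymbol{\costDynPro_x}+\boldsymbol{\costDynPro_y}$) with finite labels realized by flows $\boldsymbol{f^*_x}$ and $\boldsymbol{f^*_y}$, and set $\boldsymbol{f_v}:=\boldsymbol{f^*_x}+\boldsymbol{f^*_y}$ on the disjoint arc sets: constraints~\eqref{DPConstr3}, \eqref{DPConstr4}, \eqref{DPConstr5} are inherited arc-wise, \eqref{DPConstr1} by adding the per-scenario cost sums, and~\eqref{DPConstr2} at $\sourceSPGraph_v$ and at the inner vertices of $G_x$ and $G_y$ by construction and at $z$ by the computation above, so $\boldsymbol{f_v}$ is feasible for the \RestrictedProblemName{v} problem and $d_v(\boldsymbol{\balanceDynPro_v},\boldsymbol{\costDynPro_v})=0\le d_x(\boldsymbol{\balanceDynPro_x},\boldsymbol{\costDynPro_x})+d_y(\boldsymbol{\balanceDynPro_y},\boldsymbol{\costDynPro_y})$ (the inequality being trivial when the labels are not both finite). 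For ``$\ge$'', assume $d_v(\boldsymbol{\balanceDynPro_v},\boldsymbol{\costDynPro_v})=0$ (otherwise nothing to prove), take a witness $\boldsymbol{f^*}$, let $\boldsymbol{f_x}$ and $\boldsymbol{f_y}$ be its restrictions to $A(G_x)$ and $A(G_y)$, and define $\boldsymbol{\balanceDynPro_x}:=\boldsymbol{\balanceDynPro_v}$, $\costDynPro^\lambda_x:=\sum_{a\in A(G_x)}c(a)f^{*\lambda}(a)$, $\costDynPro^\lambda_y:=\sum_{a\in A(G_y)}c(a)f^{*\lambda}(a)$, and let $\balanceDynPro^\lambda_y$ be the net flow leaving $z$ within $G_y$; then $\boldsymbol{f_x}$ and $\boldsymbol{f_y}$ are feasible for the \RestrictedProblemName{x} and \RestrictedProblemName{y} problems (using that $z=\sinkSPGraph_x$ is the unconstrained target of $G_x$), the split is admissible by the computation, and hence the minimum on the right-hand side is at most $d_x(\boldsymbol{\balanceDynPro_x},\boldsymbol{\costDynPro_x})+d_y(\boldsymbol{\balanceDynPro_y},\boldsymbol{\costDynPro_y})=0=d_v(\boldsymbol{\balanceDynPro_v},\boldsymbol{\costDynPro_v})$.
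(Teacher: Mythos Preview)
Your proposal is correct and follows essentially the same two-inequality scheme as the paper's own proof: restrict an optimal solution of the \RestrictedProblemName{v} problem to $G_x$ and $G_y$ for one direction, and glue feasible solutions of the child problems for the other. You make the flow-conservation bookkeeping at the contracted vertex $z$ more explicit than the paper does (the paper simply asserts $\boldsymbol{\balanceDynPro_{y}}=\boldsymbol{\balanceDynPro_{x}}+\boldsymbol{\beta}$ with a one-line verbal justification), and your handling of the $\{0,\infty\}$-valued labels is likewise more careful, but the underlying argument is the same.
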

\begin{proof}
For vertex $v\in V(T)$, let $d_v(\boldsymbol{\balanceDynPro_{v}}, \boldsymbol{\costDynPro_{v}})$ be the demand label with the related solution $\boldsymbol{f^*}$. 
We assume that digraph $G_v$ is constructed by contracting the target $\sinkSPGraph_x$ of subgraph $G_x$ with the origin $\sourceSPGraph_y$ of subgraph $G_y$. 
Consequently, the flow that is sent through subgraph $G_y$ requires on the one hand the access via origin $\sourceSPGraph_y$. 
On the other hand, at least the same amount of flow is originated in subgraph $G_x$ in the first place.  
Using this insight, we partition flow $\boldsymbol{f^*}$ and the associated supply $\boldsymbol{\balanceDynPro_{v}}=\sum_{a=(\sourceSPGraph_v,w)\in A(G_v)} \boldsymbol{f^*}(a)$ in two flows $\boldsymbol{f_x}$ and $\boldsymbol{f_y}$ where flow $\boldsymbol{f_x}$ is defined on subgraph $G_x$ and flow $\boldsymbol{f_y}$ is defined on subgraph $G_y$ only. 
More precisely, we obtain $\boldsymbol{f_x}(a):=\boldsymbol{f^*}(a)$ for all arcs $a\in A(G_x)$ with associated supply $\boldsymbol{\balanceDynPro_{x}}=\boldsymbol{\balanceDynPro_{v}}$, and $\boldsymbol{f_y}(a):=\boldsymbol{f^*}(a)$ for all arcs $a\in A(G_y)$ with associated supply $\boldsymbol{\balanceDynPro_{y}}:=\sum_{a=(\sourceSPGraph_y,w)\in A(G_y)} \boldsymbol{f^*}(a)= \boldsymbol{\balanceDynPro_{x}}+\boldsymbol{\beta}$ where $\boldsymbol{\beta}=(\beta^1,\ldots,\beta^{|\Lambda|})$ with $\beta^\lambda:= \sum_{v\in V(G_x)\setminus\{\sourceSPGraph_x\}}b^\lambda(v)$, $\lambda\in \Lambda$. 
The associated supply $\boldsymbol{\balanceDynPro_{y}}$ results from the supply $\boldsymbol{\balanceDynPro_{x}}$ plus the flow that originates from sources (that are different from the origin) in subgraph $G_x$ minus the flow that is absorbed at sinks in subgraph $G_x$. 
The budget $\boldsymbol{\costDynPro_{v}}=\sum_{a\in A(G_v)} c(a)\cdot \boldsymbol{f^*}(a)$ of flow $\boldsymbol{f^*}$ can also be divided such that $\boldsymbol{\costDynPro_{x}}$ describes the budget of flow $\boldsymbol{f_x}$ and $\boldsymbol{\costDynPro_{y}}$ the one of flow $\boldsymbol{f_y}$.
Flows $\boldsymbol{f_x}$ and $\boldsymbol{f_y}$ are feasible solutions to the \RestrictedProblemName{x} and \RestrictedProblemName{y} problem, respectively. 
Consequently, we obtain 
\begin{align*}
d_v(\boldsymbol{\balanceDynPro_{v}}, \boldsymbol{\costDynPro_{v}})
= 
d_v(\boldsymbol{\balanceDynPro_{x}}, \boldsymbol{\costDynPro_x}+\boldsymbol{\costDynPro_y})  
&\geq
d_x(\boldsymbol{\balanceDynPro_{x}}, \boldsymbol{\costDynPro_x})  + d_y(\boldsymbol{\balanceDynPro_{y}}, \boldsymbol{\costDynPro_y}), 
\end{align*}
where $d_x(\boldsymbol{\balanceDynPro_{x}}, \boldsymbol{\costDynPro_x})$ and $d_y(\boldsymbol{\balanceDynPro_{y}}, \boldsymbol{\costDynPro_y})$ are the demand labels corresponding to child vertices $x,y\in V(T)$.
In particular, this implies
\begin{align*}
d_v(\boldsymbol{\balanceDynPro_{v}}, \boldsymbol{\costDynPro_{v}})
\geq
\min\limits_{\substack{
		\boldsymbol{\balanceDynPro_{x}}= \boldsymbol{\balanceDynPro_{v}}\\
		\boldsymbol{\balanceDynPro_{y}}=  \boldsymbol{\balanceDynPro_{x}} + \boldsymbol{\beta} 	\\
		\boldsymbol{\costDynPro_{v}}   =\boldsymbol{\costDynPro_{x}} 		+ \boldsymbol{\costDynPro_{y}}
}}
\left\{    
d_x(\boldsymbol{\balanceDynPro_{x}}, \boldsymbol{\costDynPro_x})  + d_y(\boldsymbol{\balanceDynPro_{y}}, \boldsymbol{\costDynPro_y})  
\right\}.
\end{align*}

Conversely, for child vertices $x,y\in V(T)$, let $d_x(\boldsymbol{\balanceDynPro_{x}}, \boldsymbol{\costDynPro_{x}})$ and $d_y(\boldsymbol{\balanceDynPro_{y}}, \boldsymbol{\costDynPro_{y}})$ with $\boldsymbol{\balanceDynPro_{y}}=\boldsymbol{\balanceDynPro_{x}} + \boldsymbol{\beta}$ be the demand labels with related solutions $\boldsymbol{f^*_x}$ and $\boldsymbol{f^*_y}$. 
Combining flows $\boldsymbol{f^*_x}$ and $\boldsymbol{f^*_y}$ results in a feasible solution $\boldsymbol{f_{v}}:=\boldsymbol{f^*_x}+\boldsymbol{f^*_y}$ to the \RestrictedProblemName{v} problem with supply $\boldsymbol{\balanceDynPro_v}:=\boldsymbol{\balanceDynPro_{x}}$ and budget $\boldsymbol{\costDynPro_{v}}:=\boldsymbol{\costDynPro_{x}}+\boldsymbol{\costDynPro_{y}}$.
Consequently, for all supplies $\boldsymbol{\balanceDynPro_{x}}$, $\boldsymbol{\balanceDynPro_{y}}$ and budgets $\boldsymbol{\costDynPro_{x}}$, $\boldsymbol{\costDynPro_{y}}$ the following holds true 
\begin{align*}
d_x(\boldsymbol{\balanceDynPro_{x}}, \boldsymbol{\costDynPro_x})  + d_y(\boldsymbol{\balanceDynPro_{y}}, \boldsymbol{\costDynPro_y})
\geq
d_v(\boldsymbol{\balanceDynPro_{x}}, \boldsymbol{\costDynPro_x}+\boldsymbol{\costDynPro_y})
=d_v(\boldsymbol{\balanceDynPro_{v}}, \boldsymbol{\costDynPro_v}),
\end{align*}
where $d_v(\boldsymbol{\balanceDynPro_{v}}, \boldsymbol{\costDynPro_v})$ is the demand label corresponding to vertex $v\in V(T)$.
This implies
\begin{align*}
d_v(\boldsymbol{\balanceDynPro_{v}}, \boldsymbol{\costDynPro_v})  
\leq 
\min\limits_{\substack{
		\boldsymbol{\balanceDynPro_{x}}= \boldsymbol{\balanceDynPro_{v}}\\
		\boldsymbol{\balanceDynPro_{y}}=  \boldsymbol{\balanceDynPro_{x}} + \boldsymbol{\beta} 	\\
		\boldsymbol{\costDynPro_{v}}   =\boldsymbol{\costDynPro_{x}} 		+ \boldsymbol{\costDynPro_{y}}
}}
\left\{    
d_x(\boldsymbol{\balanceDynPro_{x}}, \boldsymbol{\costDynPro_x})  + d_y(\boldsymbol{\balanceDynPro_{y}}, \boldsymbol{\costDynPro_y})  
\right\}.
\end{align*}
\end{proof}
Finally, a robust flow in SP digraph $G$ is obtained by backtracking the steps of the DP, and considering the demand label associated to the SP tree's root $r$. 
\begin{lemma}\label{LemmaDPLabelWurzzelknoten}
Let $\boldsymbol{f}$ be an optimal robust $\boldsymbol{b}$-flow in SP digraph $G_r$. For the cost it holds that
\begin{align}
c(\boldsymbol{f})
= \min
	\Big \{\hat{c} \ \big|\big. \ \exists \ \boldsymbol{\costDynPro_r}\in \{0,\ldots,C_r\}^{|\Lambda|}: \ \max_{\lambda\in \Lambda}\costDynPro^\lambda_{r}=\hat{c} \ \wedge \ d_r(\boldsymbol{b}(o_r), \boldsymbol{\costDynPro_r})=0\Big \}
	\label{DPLoesung}
\end{align}
with $C_r=\sum_{a\in A(G_r)} c(a)\cdot u(a)$.
\end{lemma}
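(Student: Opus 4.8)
The plan is to recognize that \eqref{DPLoesung} merely rewrites the definition $c(\boldsymbol{f})=\max_{\lambda\in\Lambda}c(f^\lambda)$ in terms of the demand label at the root, once one observes that the \RestrictedProblemName{r} problem with supply fixed to the original origin balances has \emph{exactly} the robust $\boldsymbol{b}$-flows as feasible solutions. Since $G_r=G$, the origin and target of $G_r$ are $\sourceSPGraph_r$ and $\sinkSPGraph_r$, and in \eqref{DPConstr2} the flow balance is relaxed only at $\sinkSPGraph_r$ while at $\sourceSPGraph_r$ the prescribed supply is $\balanceDynPro^\lambda_r$. First I would fix $\balanceDynPro^\lambda_r:=b^\lambda(\sourceSPGraph_r)$ for all $\lambda\in\Lambda$ and sum the flow balance constraints \eqref{DPConstr2} over all $w\in V(G)\setminus\{\sinkSPGraph_r\}$: every arc with both endpoints in this set cancels, so the left-hand side collapses to the inflow minus the outflow at $\sinkSPGraph_r$, while the right-hand side equals $\sum_{w\neq \sinkSPGraph_r}b^\lambda(w)=-b^\lambda(\sinkSPGraph_r)$ because $\sum_{v\in V}b^\lambda(v)=0$. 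Hence the flow balance constraint at $\sinkSPGraph_r$ is automatically implied, so a flow satisfies \eqref{DPConstr2}--\eqref{DPConstr5} for $\boldsymbol{\balanceDynPro_r}=\boldsymbol{b}(\sourceSPGraph_r)$ if and only if it is a robust $\boldsymbol{b}$-flow, and by \eqref{DPConstr1} such a flow has cost $c(f^\lambda)=\costDynPro^\lambda_r$ in every scenario $\lambda$.

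With this observation both inequalities are short. For ``$\le$'': starting from the optimal robust $\boldsymbol{b}$-flow $\boldsymbol{f}$, set $\costDynPro^\lambda_r:=c(f^\lambda)$; these values are nonnegative integers and lie in $\{0,\dots,C_r\}$ since $c(f^\lambda)\le\sum_{a\in A(G_r)}c(a)u(a)=C_r$, and by the observation $\boldsymbol{f}$ is feasible for the \RestrictedProblemName{r} problem, so $d_r(\boldsymbol{b}(\sourceSPGraph_r),\boldsymbol{\costDynPro_r})=0$ while $\max_{\lambda}\costDynPro^\lambda_r=\max_\lambda c(f^\lambda)=c(\boldsymbol{f})$; thus $c(\boldsymbol{f})$ lies in the set on the right-hand side of \eqref{DPLoesung} (in particular that set is non-empty), so the minimum there is at most $c(\boldsymbol{f})$. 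For ``$\ge$'': take any $\boldsymbol{\costDynPro_r}\in\{0,\dots,C_r\}^{|\Lambda|}$ with $d_r(\boldsymbol{b}(\sourceSPGraph_r),\boldsymbol{\costDynPro_r})=0$ and let $\boldsymbol{g}$ be an associated feasible solution; by the observation $\boldsymbol{g}$ is a robust $\boldsymbol{b}$-flow with $c(g^\lambda)=\costDynPro^\lambda_r$, hence $c(\boldsymbol{g})=\max_\lambda\costDynPro^\lambda_r=\hat c$, and optimality of $\boldsymbol{f}$ gives $c(\boldsymbol{f})\le c(\boldsymbol{g})=\hat c$; minimizing over all admissible $\hat c$ then shows that the right-hand side of \eqref{DPLoesung} is at least $c(\boldsymbol{f})$. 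Combining the two inequalities proves the claim.

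The only step that needs genuine care is the reduction of the target balance to the origin supply in the first paragraph, i.e., verifying that prescribing $\boldsymbol{b}(\sourceSPGraph_r)$ at $\sourceSPGraph_r$ already forces \eqref{DPConstr2} at $\sinkSPGraph_r$; everything else is bookkeeping with \eqref{DPConstr1}--\eqref{DPConstr5}. I would also add one sentence noting that the labels $d_v$ used here are well defined and computable bottom-up via Lemmas~\ref{LemmaDPInitalisierung}--\ref{LemmaDPSerielleKomposition}, but since $d_v$ is \emph{defined} as the optimal value of the \RestrictedProblemName{v} problem, no further correctness argument about the recursion is needed for this particular statement.
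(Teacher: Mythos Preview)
Your proposal is correct and follows essentially the same approach as the paper: both arguments rest on identifying the feasible solutions of the \RestrictedProblemName{r} problem with $\boldsymbol{\balanceDynPro_r}=\boldsymbol{b}(\sourceSPGraph_r)$ with the robust $\boldsymbol{b}$-flows, and then reading off the cost via constraint~\eqref{DPConstr1}. You are in fact more careful than the paper on the one point that matters, namely the explicit summation argument showing that the flow balance at $\sinkSPGraph_r$ is forced by the remaining balances together with $\sum_v b^\lambda(v)=0$; the paper simply asserts that every feasible solution of the restricted problem is a robust $\boldsymbol{b}$-flow without spelling this out.
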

\begin{proof}
For all vertices $v\in V(G)\setminus\{\sinkSPGraph\}$, the flow balance constraints of the \ProblemName{} problem are ensured by constraints~\eqref{DPConstr2} of the \RestrictedProblemName{r} problem with $\boldsymbol{\balanceDynPro_{r}}= \boldsymbol{b}(o_r)$. 
The consistent flow and capacity constraints as well as the integer conditions of the \ProblemName{} problem are one to one included in the \RestrictedProblemName{r} problem by constraints~\eqref{DPConstr3},~\eqref{DPConstr4} and~\eqref{DPConstr5}, respectively. 
Accordingly, every feasible solution to the \RestrictedProblemName{r} problem is also a feasible solution to the \ProblemName{} problem. 

However, the \RestrictedProblemName{r} problem contains one additional set of constraints, namely constraints~\eqref{DPConstr1}. 
Constraints~\eqref{DPConstr1} control whether the cost of a flow is equal to the budget.  
For this reason, we look for a budget $\boldsymbol{\costDynPro_r}\in \{0,\ldots,C_r\}^{|\Lambda|}$ for which a feasible solution to the \RestrictedProblemName{r} problem exists, i.e., for which $d_r(\boldsymbol{b}(o_r), \boldsymbol{\costDynPro_r})=0$ holds true.
This solution corresponds to a robust $\boldsymbol{b}$-flow $\boldsymbol{f}$ with cost $c(\boldsymbol{f})=\max_{\lambda\in \Lambda} c(f^\lambda)= \max_{\lambda \in \Lambda}\costDynPro^\lambda_{r}$.
Therefore, we are interested in the minimum maximum budget needed among all scenarios $\lambda\in \Lambda$ which we obtain by expression \eqref{DPLoesung}.
\end{proof}
After all, we analyze the runtime of the DP.  
\begin{theorem}\label{Theorem:PseudopolynomialDP}
Let $(G,u,c,\boldsymbol{b})$ be a \ProblemName{} instance where $G$ is an SP digraph with origin $o$.
Using the DP described, the \ProblemName{} problem can be solved in $\mathcal{O}(|A(G)|(U+1)^{2|\Lambda|}( C+1)^{2|\Lambda|})$ time where \- $U:=\sum_{ a=(\sourceSPGraph,v)\in A(G)} u(a)$ and $C:=\sum_{ a\in A(G)} c(a)\cdot u(a)$ holds.
\end{theorem}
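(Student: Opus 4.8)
The plan is to keep the correctness of the recursion separate from the running-time count: soundness of the initialisation, of the two composition rules, and of the read-off at the root is already furnished by Lemmas~\ref{LemmaDPInitalisierung}--\ref{LemmaDPLabelWurzzelknoten}, so only the complexity bound remains to be established. First I would fix an SP tree $T$ of $G$, which is obtained in polynomial time~\citep{Valdes:1979:RSP:800135.804393}. Since $T$ is a full binary tree whose leaves ($L$-vertices) correspond bijectively to the arcs of $G$ and whose inner vertices ($S$- and $P$-vertices) each have two children, $T$ has $|A(G)|$ leaves and $|A(G)|-1$ inner vertices, hence $\mathcal{O}(|A(G)|)$ vertices in total. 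The algorithm then processes the vertices of $T$ bottom-up, maintaining at every vertex $v$ the table of demand labels $d_v(\boldsymbol{\balanceDynPro_{v}},\boldsymbol{\costDynPro_{v}})$ indexed by the admissible supply vectors $\boldsymbol{\balanceDynPro_{v}}\in\{0,\dots,U_v\}^{|\Lambda|}$ and budget vectors $\boldsymbol{\costDynPro_{v}}\in\{0,\dots,C_v\}^{|\Lambda|}$. Because $A(G_v)\subseteq A(G)$ gives $C_v\le C$, and the supplies relevant at $o_v$ stay within $\{0,\dots,U\}$, every such table has at most $(U+1)^{|\Lambda|}(C+1)^{|\Lambda|}$ entries.

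The second step is to charge the work performed at each vertex against its table size. At an $L$-vertex the closed form of Lemma~\ref{LemmaDPInitalisierung} evaluates each entry by a test costing $\mathcal{O}(|\Lambda|)$, so a leaf table is produced in $\mathcal{O}\big((U+1)^{|\Lambda|}(C+1)^{|\Lambda|}\,|\Lambda|\big)$ time. At a $P$-vertex $v$ with children $x,y$, Lemma~\ref{LemmaDPParalleleKomposition} writes $d_v$ as the minimum over all splittings $\boldsymbol{\balanceDynPro_{v}}=\boldsymbol{\balanceDynPro_{x}}+\boldsymbol{\balanceDynPro_{y}}$, $\boldsymbol{\costDynPro_{v}}=\boldsymbol{\costDynPro_{x}}+\boldsymbol{\costDynPro_{y}}$; implementing this as a single sweep over all pairs formed from an entry of the $x$-table and an entry of the $y$-table, relaxing the corresponding entry of the $v$-table, visits at most $\big((U+1)^{|\Lambda|}(C+1)^{|\Lambda|}\big)^{2}$ pairs, i.e.\ $\mathcal{O}\big((U+1)^{2|\Lambda|}(C+1)^{2|\Lambda|}\big)$ operations. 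At an $S$-vertex $v$, Lemma~\ref{LemmaDPSerielleKomposition} forces $\boldsymbol{\balanceDynPro_{x}}=\boldsymbol{\balanceDynPro_{v}}$ and $\boldsymbol{\balanceDynPro_{y}}=\boldsymbol{\balanceDynPro_{v}}+\boldsymbol{\beta}$, so only the budget split is free and the cost drops to $\mathcal{O}\big((U+1)^{|\Lambda|}(C+1)^{2|\Lambda|}\big)$, which is dominated by the $P$-vertex bound. Summing over the $\mathcal{O}(|A(G)|)$ vertices of $T$ yields the claimed $\mathcal{O}\big(|A(G)|(U+1)^{2|\Lambda|}(C+1)^{2|\Lambda|}\big)$ running time; constructing $T$, scanning the $(C+1)^{|\Lambda|}$ budget vectors at the root to extract the optimum via Lemma~\ref{LemmaDPLabelWurzzelknoten}, and backtracking a minimiser recorded during the forward pass to output an actual robust $\boldsymbol{b}$-flow are all dominated by this term.

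The part I expect to require the most care is the bookkeeping around the parameter ranges --- in particular, arguing cleanly that for every subgraph $G_v$ the table that is built can be indexed within $\{0,\dots,U\}^{|\Lambda|}\times\{0,\dots,C\}^{|\Lambda|}$, and that the minimisations of Lemmas~\ref{LemmaDPParalleleKomposition} and~\ref{LemmaDPSerielleKomposition} genuinely admit the quadratic-size sweep over child entries rather than a more expensive nested enumeration. Once these two points are settled, combining the count of $\mathcal{O}(|A(G)|)$ vertices with the per-vertex work $\mathcal{O}\big((U+1)^{2|\Lambda|}(C+1)^{2|\Lambda|}\big)$ is routine.
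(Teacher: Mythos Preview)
Your proposal is correct and follows essentially the same approach as the paper: correctness is delegated to Lemmas~\ref{LemmaDPInitalisierung}--\ref{LemmaDPLabelWurzzelknoten}, the SP tree has $\mathcal{O}(|A(G)|)$ vertices, each vertex carries at most $(U+1)^{|\Lambda|}(C+1)^{|\Lambda|}$ labels, and computing one inner-vertex label costs another factor $(U+1)^{|\Lambda|}(C+1)^{|\Lambda|}$, giving the stated bound. Your treatment is in fact slightly more careful than the paper's (you distinguish the cheaper $S$-vertex case, note the $|\Lambda|$ factor at leaves, and explicitly address the root read-off and backtracking), and you rightly flag the bookkeeping on the supply range as the point needing justification---the paper simply asserts the $(U+1)^{|\Lambda|}(C+1)^{|\Lambda|}$ bound uniformly over all $v\in V(T)$ without further comment.
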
 
\begin{proof}
The correctness of the algorithm follows from Lemmas \ref{LemmaDPInitalisierung}-\ref{LemmaDPLabelWurzzelknoten}. 
Considering the runtime, first of all, we mention that the representation of an SP digraph $G$ by its SP tree $T$ can be computed in $\mathcal{O}(|A(G)|)$ \citep{Valdes:1979:RSP:800135.804393}.
At every SP tree's vertex $v\in V(T)$ demand labels for all supplies $\boldsymbol{\balanceDynPro_v}$ and budgets $\boldsymbol{\costDynPro_v}$ need to be calculated where the number of combinations is limited by $(U+1)^{|\Lambda|} \cdot (C+1)^{|\Lambda|}$. 
As SP tree $T$ of SP digraph $G$ has exactly $|V(T)|=2|A(G)|-1$ vertices, we have to compute $\mathcal{O}(2|A(G)|\cdot (U+1)^{|\Lambda|} \cdot (C+1)^{|\Lambda|})$ demand labels.
It remains to bound the complexity for computing the demand labels. 
If $v\in V(T)$ is an $L$-vertex, computing the corresponding demand labels is clearly in $\mathcal{O}(1)$. 
If $v\in V(T)$ is an $S$- or $P$-vertex, we need to compute the minimum of $(U+1)^{|\Lambda|}(C+1)^{|\Lambda|}$ sums which is in $\mathcal{O}((U+1)^{|\Lambda|}(C+1)^{|\Lambda|})$.
In total, we obtain a runtime of $\mathcal{O}(2|A(G)|\cdot (U+1)^{2|\Lambda|} \cdot (C+1)^{2|\Lambda|})$.
\end{proof}
By reason of Theorem~\ref{Theorem:PseudopolynomialDP}, the pseudo-polynomial runtime of the DP follows. 
Together with the result of Theorem~\ref{Theorem:SPComplexity} we obtain the following corollary. 
\begin{corollary}
The decision version of the \ProblemName{} problem on networks based on SP digraphs with multiple sources and multiple sinks is weakly $\mathcal{NP}$-complete and can be solved by the presented DP in pseudo-polynomial time.
\end{corollary}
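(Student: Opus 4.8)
The plan is to obtain this corollary directly, by assembling Theorem~\ref{Theorem:SPComplexity} and Theorem~\ref{Theorem:PseudopolynomialDP}; no new construction is required. Membership in $\mathcal{NP}$ is immediate: given a candidate robust $\boldsymbol{b}$-flow $\boldsymbol{f}=(f^1,\ldots,f^{|\Lambda|})$ together with a bound $\beta$, one checks the capacity constraints, the flow balance constraints in each scenario, the consistent flow constraints on all fixed arcs, and the cost bound $\max_{\lambda\in\Lambda}c(f^\lambda)\le\beta$ in polynomial time, exactly as already noted in the proof of Theorem~\ref{Theorem:ReductionAcyclicGraphs}.

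For $\mathcal{NP}$-hardness, the key observation is that a network with a unique source and single sinks is a particular network with multiple sources and multiple sinks. Hence the polynomial-time reduction from \textsc{Partition} constructed in the proof of Theorem~\ref{Theorem:SPComplexity} — which already yields SP-digraph instances with a unique source, single sinks, and only two scenarios — is at the same time a valid reduction into the more general problem class considered in the corollary. Since \textsc{Partition} is weakly $\mathcal{NP}$-complete, the decision version of \ProblemName{} on SP digraphs with multiple sources and multiple sinks is $\mathcal{NP}$-hard.

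To conclude that the problem is only weakly, and not strongly, $\mathcal{NP}$-complete, I would invoke Theorem~\ref{Theorem:PseudopolynomialDP}: the dynamic program operates on the SP tree with no restriction on the source/sink structure — the demand labels refer only to the origin and target of each subgraph and to the fixed balances $\boldsymbol{b}$ at interior vertices — so it solves instances with arbitrarily many sources and sinks in $\mathcal{O}(|A(G)|(U+1)^{2|\Lambda|}(C+1)^{2|\Lambda|})$ time, which for a fixed number of scenarios (in particular $|\Lambda|=2$, as in the hardness instances) is pseudo-polynomial in the input size. Combining the $\mathcal{NP}$-hardness with the pseudo-polynomial algorithm gives the claimed weak $\mathcal{NP}$-completeness.

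The only real work here is bookkeeping rather than a genuine obstacle: one must confirm that the hardness instances of Theorem~\ref{Theorem:SPComplexity} indeed lie inside the problem class of the corollary (they do, a unique-source/single-sink network being one specific multiple-source/multiple-sink network) and that the DP of Theorem~\ref{Theorem:PseudopolynomialDP} was formulated and proved in the general source/sink setting (it was, as emphasized before its statement). With both checks made, the corollary follows.
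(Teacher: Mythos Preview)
Your proposal is correct and matches the paper's own justification: the paper simply states that the corollary follows from Theorem~\ref{Theorem:SPComplexity} (hardness via \textsc{Partition}) together with Theorem~\ref{Theorem:PseudopolynomialDP} (the pseudo-polynomial DP), without giving a separate proof. Your additional bookkeeping---explicitly noting $\mathcal{NP}$-membership, that the hardness instances fall within the multiple-source/multiple-sink class, and that the DP is genuinely pseudo-polynomial when $|\Lambda|$ is fixed---is sound and, if anything, slightly more careful than the paper itself.
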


\subsection{Special Case of Unique Source and Unique Sink Networks}
\label{Subsec:SpecialCaseUniqueSourceUniqueSinkCapacities}
In this section, we provide a polynomial time algorithm for the special case of networks based on SP digraphs with a unique source and a unique sink.  
The core idea of the algorithm is based on the algorithm of Bein et al.~(\citeyear{bein1985minimum}) which iteratively sends flow along shortest paths to solve the MCF problem. 
Before we propose a generalized algorithm for the \ProblemName{} problem, we investigate properties of an optimal robust flow in the networks considered.  
In particular, we study the cost and show that we can restrict the statement of Lemma \ref{lem:UniqueSourceUniqueSinkLimitationCost}. 

We start with introducing the notations and definitions needed.
Let us consider a \ProblemName{} instance $(G,u,c,\boldsymbol{b})$ where $G$ is an SP digraph with origin $\sourceSPGraph$ and target $\sinkSPGraph$.
As SP digraphs are acyclic, we assume without loss of generality that the unique source complies with origin $\sourceSPGraph$ and the unique sink complies with target $\sinkSPGraph$. 
Due to Lemma \ref{lem:UniqueSourceUniqueSinkLimitationCost}, we limit our efforts to a set of two scenarios, i.e., $\Lambda=\{1,2\}$.
For convenience, we introduce a demand vector $\boldsymbol{d}=(d^1,d^2)$, consisting of the number of flow units that, according to the balances $\boldsymbol{b}$, are supplied from the unique source and demanded by the unique sink, i.e., $d^1:=b^1(\sourceSPGraph)=-b^1(\sinkSPGraph)$ and $d^2:=b^2(\sourceSPGraph)=-b^2(\sinkSPGraph)$.
Without loss of generality, let $d^1$ and $d^2$ be given such that $d^1\leq d^2$ holds true.
Further, let $H\subseteq G$ be an SP subgraph with origin $o_H$. 
We denote the flow value of a given flow $f^\lambda$, $\lambda\in \Lambda$ entering subgraph $H$ by $\Demand{}{f}{\lambda}{|H}$ such that the following holds
$$\Demand{}{f}{\lambda}{|H}:= \sum_{a=(o_H,w)\in A(H)} f^\lambda(a).$$ 

Using these notations and definitions we aim at investigating the cost of an optimal robust flow.
In contrast to networks based on acyclic digraphs with a unique source and a unique sink, see Example~\ref{expl:ExampleCost}, for the special case considered in this section it is sufficient to concentrate on the cost of the last scenario flow.
Before we prove this statement, we need the following two auxiliary lemmas.
\begin{restatable}[]{lemma}{RedirectionFlow}
	\label{lem:RedirectionFlow}
	Let $G$ be an SP digraph which is composed by subgraphs $G_1$ and $G_2$, and let $(G,u,c,\boldsymbol{b})$ be a corresponding \ProblemName{} instance.
	There exists an optimal robust $\boldsymbol{b}$-flow $\boldsymbol{f}=(f^1,f^2)$ for which $\Demand{2}{f}{2}{|G_1}\geq \Demand{1}{f}{1}{|G_1}$ and $\Demand{}{f}{2}{|G_2}\geq \Demand{}{f}{1}{|G_2}$ hold true. 
\end{restatable}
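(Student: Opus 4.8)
The plan is a local exchange argument on the outermost composition of $G$. If $G$ is a series composition of $G_1$ (origin $\sourceSPGraph$, target $z$) and $G_2$ (origin $z$, target $\sinkSPGraph$), the claim is immediate: in the unique source / unique sink setting every vertex other than $\sourceSPGraph$ and $\sinkSPGraph$ has balance $0$, so in any $b^\lambda$-flow all $d^\lambda$ units pass through $z$, whence $\Demand{}{f}{\lambda}{|G_1}=\Demand{}{f}{\lambda}{|G_2}=d^\lambda$ for $\lambda\in\{1,2\}$, and $d^1\le d^2$ gives both inequalities for every optimal robust flow. Hence I may assume $G$ is the parallel composition of $G_1$ and $G_2$, with $\sourceSPGraph$ their common origin and $\sinkSPGraph$ their common target.

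Fix an optimal robust $\boldsymbol{b}$-flow $\boldsymbol{f}=(f^1,f^2)$ and write $a_i:=\Demand{}{f}{1}{|G_i}$, $b_i:=\Demand{}{f}{2}{|G_i}$, so $a_1+a_2=d^1\le d^2=b_1+b_2$. Since $a_1>b_1$ and $a_2>b_2$ simultaneously would force $d^1>d^2$, at most one of the two required inequalities can fail; if none fails we are done, and otherwise, relabelling $G_1$ and $G_2$ if necessary, we may assume $a_1>b_1$ and hence $a_2<b_2$. Let $\ell$ be the common load of $\boldsymbol{f}$ on the fixed arcs, and for $i\in\{1,2\}$ let $g_i(m)$ be the minimum cost of a flow of value $m$ in $G_i$ equipped with supply $m$ at $\sourceSPGraph$, demand $m$ at $\sinkSPGraph$, balance $0$ elsewhere, and load $\ell$ on $\fix(G_i)$. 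Applying the construction of Lemma~\ref{lem:Transformation} to $G_i$, such flows correspond, up to an additive constant in the objective, to flows in $G_i-\fix(G_i)$ for a balance vector that depends affinely on $m$; hence by Hoffman's feasibility conditions the admissible values of $m$ form an interval of integers, and by the parametric theory of the minimum cost flow problem together with the integrality of optimal minimum cost flows, $g_i$ is convex on that interval. In particular $g_i$ is finite on the whole interval between $a_i$ and $b_i$, and, since a convex function on an interval is bounded by the larger of its two endpoint values, $g_2(m)\le\max\{g_2(a_2),g_2(b_2)\}\le\max\{c(f^1_{|G_2}),c(f^2_{|G_2})\}$ for every integer $m$ with $a_2\le m\le b_2$, the last step using that $f^1_{|G_2}$ and $f^2_{|G_2}$ are admissible flows of values $a_2$ and $b_2$.

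Now distinguish two cases according to which scenario is cheaper on $G_2$. If $c(f^1_{|G_2})\le c(f^2_{|G_2})$, keep $\hat f^2:=f^2$, set $\hat f^1_{|G_1}:=f^2_{|G_1}$, and let $\hat f^1_{|G_2}$ be a minimum cost flow of value $d^1-b_1$ in $G_2$ with load $\ell$ on $\fix(G_2)$, which exists because $a_2<d^1-b_1\le b_2$. Then $\boldsymbol{\hat f}$ is a robust $\boldsymbol{b}$-flow (capacities, flow conservation and consistency are immediate from the construction), $\Demand{}{\hat f}{1}{|G_1}=\Demand{}{\hat f}{2}{|G_1}=b_1$ and $\Demand{}{\hat f}{1}{|G_2}=d^1-b_1\le b_2=\Demand{}{\hat f}{2}{|G_2}$ give the two inequalities, and $c(\hat f^1)=c(f^2_{|G_1})+g_2(d^1-b_1)\le c(f^2_{|G_1})+c(f^2_{|G_2})=c(f^2)\le c(\boldsymbol{f})$, so $\boldsymbol{\hat f}$ is still optimal. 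If instead $c(f^1_{|G_2})\ge c(f^2_{|G_2})$, keep $\hat f^1:=f^1$, set $\hat f^2_{|G_1}:=f^1_{|G_1}$, and let $\hat f^2_{|G_2}$ be a minimum cost flow of value $d^2-a_1$ in $G_2$ with load $\ell$ on $\fix(G_2)$, which exists because $a_2\le d^2-a_1\le b_2$; then $\Demand{}{\hat f}{1}{|G_1}=\Demand{}{\hat f}{2}{|G_1}=a_1$, $\Demand{}{\hat f}{1}{|G_2}=a_2\le d^2-a_1=\Demand{}{\hat f}{2}{|G_2}$, and $c(\hat f^2)=c(f^1_{|G_1})+g_2(d^2-a_1)\le c(f^1_{|G_1})+c(f^1_{|G_2})=c(f^1)\le c(\boldsymbol{f})$, again preserving optimality.

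I expect the main obstacle of the write-up to be the two properties of $g_i$ used above: that its domain is an interval (so that the intermediate-value reroutings with the prescribed fixed-arc load genuinely exist) and that it is convex (so that the rerouting never costs more than the larger of the two original scenario costs on $G_2$). Both follow by passing, via the construction of Lemma~\ref{lem:Transformation}, to an ordinary minimum cost flow problem on $G_i-\fix(G_i)$ with a balance that is affine in $m$, and then invoking Hoffman's circulation theorem together with the piecewise-linearity, convexity and integrality of the minimum cost flow value function; the care needed is only in the bookkeeping, namely that deleting $\fix(G_i)$ turns the supply and demand at $\sourceSPGraph$ and $\sinkSPGraph$ into the parameter while the fixed-arc loads shift the balances and the objective only by constants. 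Everything else — the verification that $\boldsymbol{\hat f}$ satisfies all constraints and the final cost comparisons — is routine.
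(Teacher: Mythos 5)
Your proof is correct and is essentially the paper's own argument: the series case is dismissed in the same way, and in the parallel case your two candidate flows coincide (after swapping the roles of $G_1$ and $G_2$) with the flows $\boldsymbol{f_a}=(f^1,\hat f)$ and $\boldsymbol{f_b}=(\tilde f,f^2)$ constructed in the paper's appendix, which keep one scenario flow, copy its restriction on one subgraph, and reroute an intermediate amount with unchanged fixed-arc load on the other subgraph. The only packaging differences are that you justify the cost bound for the rerouted part via generic parametric minimum-cost-flow machinery (Hoffman feasibility, total unimodularity, convexity of the value function) where the paper simply reuses the convex-combination construction of Lemma~\ref{lem:UniqueSourceUniqueSinkLimitationCost}, and that you select the appropriate candidate upfront by comparing $c(f^1_{|G_2})$ and $c(f^2_{|G_2})$, giving a two-case argument instead of the paper's proof that $\min\{c(\boldsymbol{f_a}),c(\boldsymbol{f_b})\}\le c(\boldsymbol{f})$ via a four-way case distinction.
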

By reason of the consistent flow constraints, the statement is not apparent. 
Due to the length, the proof is moved to Appendix~\ref{Appendix}.

\begin{lemma}\label{lem:FlowRelationScenarios}
Let $(G,u,c,\boldsymbol{b})$ be a \ProblemName{} instance where $G$ is an SP digraph.
There exists an optimal robust $\boldsymbol{b}$-flow $\boldsymbol{f}=(f^1,f^{2})$ such that $f^2(a)\geq f^1(a)$ holds true for all arcs $a\in A(G)$. 
\end{lemma}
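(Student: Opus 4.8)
My plan is a structural induction along the SP tree of $G$, with Lemma~\ref{lem:RedirectionFlow} carrying the argument at every parallel composition. Concretely, I would prove the following statement for every SP subgraph $H$ occurring in the decomposition of $G$ and every demand pair $(e^1,e^2)$ with $e^1\le e^2$: the instance that routes $e^\lambda$ units from $\sourceSPGraph_H$ to $\sinkSPGraph_H$ and leaves all interior vertices balanced admits an optimal robust flow $\boldsymbol{g}=(g^1,g^2)$ with $g^2(a)\ge g^1(a)$ on every arc. Since $G$ has a unique source $\sourceSPGraph$, a unique sink $\sinkSPGraph$, and zero balance elsewhere, every such induced sub-instance is again of this shape, so the induction does not leave the class. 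The base case is an $L$-vertex, i.e.\ a single arc $a=(\sourceSPGraph_H,\sinkSPGraph_H)$; there the only robust flow has $f^1(a)=e^1\le e^2=f^2(a)$, which is already monotone.

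For an $S$-vertex, write $H$ as the series composition of $H_1$ and $H_2$. The vertex identified in the composition has balance $0$ and is the only connection between the two parts, so every robust flow on $H$ routes exactly $e^\lambda$ units through each of $H_1$ and $H_2$; restricting a robust flow on $H$ to $A(H_1)$ and to $A(H_2)$ therefore yields robust flows for the two sub-instances, each with the same ordered demand pair $(e^1,e^2)$. I would apply the induction hypothesis to both, obtain monotone optimal robust flows on $H_1$ and $H_2$, and glue them at the identified vertex. The result is feasible (conservation at the glued vertex holds because both parts carry $e^\lambda$), it respects the consistent flow constraints ($\fix(H_1)$ and $\fix(H_2)$ are disjoint and each part is already consistent), and it is arcwise monotone. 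It is again optimal because the cost of any flow on $H$ splits, scenario by scenario, into its costs on the edge-disjoint parts $H_1$ and $H_2$, exactly as exploited in Lemma~\ref{LemmaDPSerielleKomposition}, so replacing the two restrictions of an optimal robust flow by the no-more-expensive monotone flows cannot increase the cost in either scenario.

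For a $P$-vertex, write $H$ as the parallel composition of $H_1$ and $H_2$; this is the genuinely delicate case, and it is the reason the statement is ``not apparent''. A priori an optimal robust flow on $H$ may satisfy $\Demand{}{f}{1}{|H_i}>\Demand{}{f}{2}{|H_i}$ for one branch, and because the fixed arcs inside that branch tie the two scenarios together one cannot fix this by an ad hoc rerouting of a single scenario. Lemma~\ref{lem:RedirectionFlow} is exactly the tool that removes this obstruction: it yields an optimal robust flow $\boldsymbol{f}$ on $H$ with $\Demand{}{f}{2}{|H_i}\ge \Demand{}{f}{1}{|H_i}$ for $i=1,2$. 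Then $(\Demand{}{f}{1}{|H_i},\Demand{}{f}{2}{|H_i})$ is an ordered demand pair for $H_i$ and the restriction of $\boldsymbol{f}$ to $A(H_i)$ is a robust flow for the corresponding sub-instance, so the induction hypothesis applies to each branch. Gluing the two resulting monotone optimal branch flows at $\sourceSPGraph_H$ and $\sinkSPGraph_H$ produces a monotone robust flow on $H$ with the original demand pair; feasibility and the consistent flow constraints are preserved (the branches are edge-disjoint, every fixed arc lies inside exactly one branch, and the outflow at $\sourceSPGraph_H$ is unchanged), and optimality is retained by the same additivity-of-cost argument as in the series case, here in the form of Lemma~\ref{LemmaDPParalleleKomposition}. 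Applying the inductive statement to $G$ with its demand pair $(d^1,d^2)$ yields the claimed optimal robust flow with $f^2(a)\ge f^1(a)$ for all $a\in A(G)$.

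I expect the parallel composition to be the only real obstacle, and within it the coordination forced by the consistent flow constraints. Making each parallel branch inherit an ordered demand pair $(\Demand{}{f}{1}{|H_i},\Demand{}{f}{2}{|H_i})$ is precisely what cannot be achieved by rerouting one scenario at a time, and it is the content of Lemma~\ref{lem:RedirectionFlow}; one should also keep Corollary~\ref{cor:cost} in mind, which warns that naive volume-based intuition can fail even under a unique source and a unique sink. Beyond that the work is careful bookkeeping: verifying that recombining monotone branch flows never violates a balance, capacity, consistency, or integrality constraint, and that --- using additivity of cost over the edge-disjoint pieces of a series or parallel composition together with the fact that a monotone flow incurs its larger cost in the second scenario --- the recombined flow is no more expensive than the optimum one started from.
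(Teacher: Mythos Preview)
Your structural induction along the SP tree, with Lemma~\ref{lem:RedirectionFlow} supplying the ordered demand splits at every parallel node, is exactly the paper's approach; the paper's own proof is a three-sentence sketch that simply asserts Lemma~\ref{lem:RedirectionFlow} ``can be recursively transferred'' to every subgraph $G_v$ in the SP tree, so that $\Demand{}{f}{2}{|G_v}\ge\Demand{}{f}{1}{|G_v}$ for all $v\in V(T)$ and hence $f^2(a)\ge f^1(a)$ at every leaf. Your outline is a faithful and much more explicit unpacking of that sketch.

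There is, however, one step that is not justified as written: the optimality of the glued flow. When you replace the restrictions of the optimal $\boldsymbol{f}$ by monotone branch-optimal flows $\boldsymbol{g_1},\boldsymbol{g_2}$, you claim this ``cannot increase the cost in either scenario''. But $\boldsymbol{g_i}$ minimises $\max_\lambda c(\cdot^\lambda)$ on $H_i$, not each scenario's cost individually. Additivity and monotonicity only give
\[
c(\boldsymbol{g})=c(g_1^2)+c(g_2^2)\le c(\boldsymbol{f}_{|H_1})+c(\boldsymbol{f}_{|H_2})=\max_\lambda c(f^\lambda_{|H_1})+\max_\lambda c(f^\lambda_{|H_2}),
\]
and this last sum can strictly exceed $c(\boldsymbol{f})=\max_\lambda\bigl(c(f^\lambda_{|H_1})+c(f^\lambda_{|H_2})\bigr)$ whenever the two branch maxima are attained in different scenarios. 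Citing Lemma~\ref{LemmaDPSerielleKomposition} or Lemma~\ref{LemmaDPParalleleKomposition} does not help here (those are feasibility/budget recursions, not optimality statements), and citing Lemma~\ref{lem:BellmanOptikriteriumSP} would be circular, since its proof uses Lemma~\ref{lem:CostSPGraphs}, which in turn rests on the present lemma.

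A clean repair is to strengthen the induction hypothesis: for every \emph{feasible} robust flow $\boldsymbol{h}$ on $H$ with ordered demand, there exists a monotone robust flow $\boldsymbol{g}$ on $H$ with $g^2=h^2$ arc by arc (hence $c(\boldsymbol{g})=c(g^2)=c(h^2)\le c(\boldsymbol{h})$). This passes through series compositions trivially, and through parallel compositions by always rerouting scenario~$1$ only---i.e.\ by taking the $\boldsymbol{f_b}=(\tilde f,h^2)$ construction from the proof of Lemma~\ref{lem:RedirectionFlow}, which yields ordered branch demands while leaving $h^2$ untouched. Applied to an optimal $\boldsymbol{h}$ at the root, the strengthened statement immediately gives the monotone optimum you need; this is also the reading that makes the paper's terse ``recursively transfer'' precise.
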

\begin{proof}
Let $T$ be the SP tree of SP digraph $G$.
As we consider a digraph with a unique source and a unique sink, the statement of Lemma~\ref{lem:RedirectionFlow} can be recursively transferred to subgraphs $G_v\subseteq G$ associated to the SP tree's vertices $v\in V(T)$, i.e., $\Demand{2}{f}{2}{|G_v}\geq \Demand{1}{f}{1}{|G_v}$ for all $v\in V(T)$.
Consequently, $f^2(a)\geq f^1(a)$ holds true for all arcs $a\in A(G)$.  
\end{proof}
Note that, in general, the statement of Lemma \ref{lem:FlowRelationScenarios} is not true for acyclic digraphs as Example~\ref{expl:ExampleCost} shows. We are now able to prove the following crucial lemma regarding the cost of a robust flow.
\begin{lemma}\label{lem:CostSPGraphs}
Let $(G,u,c,\boldsymbol{b})$ be a \ProblemName{} instance where $G$ is an SP digraph.
There exists an optimal robust $\boldsymbol{b}$-flow $\boldsymbol{f}=(f^1,f^{2})$ whose cost is determined by the cost of the last scenario flow, i.e., 
$c(\boldsymbol{f})=\max\{c(f^1),c(f^{2})\}=c(f^{2})$.
\end{lemma}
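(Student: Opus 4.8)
The plan is to read off the statement directly from Lemma~\ref{lem:FlowRelationScenarios} together with the nonnegativity of the cost function, so the argument is very short; essentially all of the structural work has already been done in Lemma~\ref{lem:RedirectionFlow} and its recursive strengthening in Lemma~\ref{lem:FlowRelationScenarios}. First I would invoke Lemma~\ref{lem:FlowRelationScenarios} to obtain an \emph{optimal} robust $\boldsymbol{b}$-flow $\boldsymbol{f}=(f^1,f^2)$ with the property that $f^2(a)\geq f^1(a)$ for every arc $a\in A(G)$. Since $c(a)\in\mathbb{Z}_{\geq 0}$ for all $a\in A(G)$, I can multiply this arc-wise inequality by $c(a)$ and sum over $A(G)$ to get
\[
c(f^1)=\sum_{a\in A(G)}c(a)\,f^1(a)\;\leq\;\sum_{a\in A(G)}c(a)\,f^2(a)=c(f^2).
\]
Hence $c(\boldsymbol{f})=\max\{c(f^1),c(f^2)\}=c(f^2)$, where the first equality is just the definition of the cost of a robust flow for $\Lambda=\{1,2\}$ and the second is what we wanted. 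No further adjustment of $\boldsymbol{f}$ is needed, because the flow supplied by Lemma~\ref{lem:FlowRelationScenarios} is already optimal among all robust $\boldsymbol{b}$-flows.

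There is essentially no obstacle left in this particular step: the interplay between the consistent flow constraints and the series/parallel decomposition — which is what makes the monotonicity $f^2\ge f^1$ nontrivial — has been absorbed into Lemmas~\ref{lem:RedirectionFlow} and~\ref{lem:FlowRelationScenarios}. The only things I would double-check explicitly are (i) that the ordering convention $d^1\leq d^2$ of this section is the one consistent with the direction of $f^2\ge f^1$, so that it is indeed the last scenario whose cost dominates, and (ii) that invoking Lemma~\ref{lem:FlowRelationScenarios} does not lose optimality — which it does not, since that lemma asserts the existence of an \emph{optimal} robust flow with the monotonicity property. I would also add a sentence recalling that this domination genuinely uses the SP structure: by Example~\ref{expl:ExampleCost} it fails on general acyclic digraphs with a unique source and unique sink, so one cannot drop the series-parallel hypothesis here.
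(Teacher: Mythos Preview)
Your proposal is correct and matches the paper's proof essentially line for line: the paper also invokes Lemma~\ref{lem:FlowRelationScenarios} to obtain an optimal robust $\boldsymbol{b}$-flow with $f^2(a)\geq f^1(a)$ for all $a\in A(G)$, multiplies by $c(a)\geq 0$, sums, and concludes $c(f^1)\leq c(f^2)$. Your additional remarks about the ordering convention $d^1\leq d^2$ and the necessity of the SP hypothesis via Example~\ref{expl:ExampleCost} are accurate and go slightly beyond what the paper writes here.
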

\begin{proof}
By Lemma \ref{lem:FlowRelationScenarios}, there exists an optimal robust $\boldsymbol{b}$-flow $\boldsymbol{f}=(f^1, f^2)$ such that $f^2(a)\geq f^1(a)$ holds for all arcs $a\in A(G)$. 
The scenario flows cause the following cost 
\begin{align*}
c(f^1)= \sum_{a\in A} c(a)\cdot f^1(a) \leq \sum_{a\in A} c(a)\cdot f^2(a) = c(f^2),
\end{align*}
from which the statement immediately follows.  
\end{proof}
By reason of Lemma~\ref{lem:CostSPGraphs}, we concentrate on the last scenario in the following.
Firstly, we note that a last scenario flow needs to send demand $d^2-d^1$ in subgraph $G-\fix$, and we refer to this demand as \textit{excess demand}. 
Before we present a further useful property of the last scenario flow regarding its excess demand and a shortest path in subgraph $G-\fix$, we need the following auxiliary lemma. 

\begin{restatable}[]{lemma}{BellmanOptikriteriumSP}
	\label{lem:BellmanOptikriteriumSP}
	Let $G$ be a series composition of SP digraphs $G_1$ and $G_2$, and let $\mathcal{I}$ be a corresponding \ProblemName{} instance.
	Then, let $\mathcal{I}_1$ and $\mathcal{I}_2$ be the \ProblemName{} instances which are obtained by restricting instance $\mathcal{I}$ to subgraphs $G_1$ and $G_2$, respectively. 
	A solution $\boldsymbol{f}$ to instance $\mathcal{I}$ is optimal if and only if the solutions $\boldsymbol{f}_{|G_1}$ and $\boldsymbol{f}_{|G_2}$, which can be obtained by restricting $\boldsymbol{f}$ to subgraphs $G_1$ and $G_2$, are optimal to instances $\mathcal{I}_1$ and $\mathcal{I}_2$, respectively. 
\end{restatable}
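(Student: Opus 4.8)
The plan is to derive the statement from a single additivity fact, $\operatorname{opt}(\mathcal{I}) = \operatorname{opt}(\mathcal{I}_1) + \operatorname{opt}(\mathcal{I}_2)$ (writing $\operatorname{opt}(\cdot)$ for the optimal cost of a \ProblemName{} instance), together with the observation that ``restrict to $G_1,G_2$'' and ``glue at the interface'' are mutually inverse operations on feasible robust flows. First I would record the structural bookkeeping of a series composition $G = G_1 \cdot G_2$: every arc of $G$ lies in exactly one of $G_1,G_2$, so $\fix(G) = \fix(G_1)\uplus\fix(G_2)$ and the consistent flow constraints split over the two subgraphs; the shared vertex $\sinkSPGraph_1 = \sourceSPGraph_2$ is an interior vertex of $G$ and hence carries balance $0$; and the unique source/sink structure forces any feasible robust $\boldsymbol{b}$-flow of $\mathcal{I}$ to route exactly $d^\lambda$ units through $\sinkSPGraph_1 = \sourceSPGraph_2$ in scenario $\lambda$. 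Consequently, for any feasible robust $\boldsymbol{b}$-flow $\boldsymbol{f}$ of $\mathcal{I}$, the restrictions $\boldsymbol{f}_{|G_1}, \boldsymbol{f}_{|G_2}$ are feasible robust flows for $\mathcal{I}_1,\mathcal{I}_2$ (each routing $d^\lambda$ in scenario $\lambda$), and conversely any pair of feasible robust subflows for $\mathcal{I}_1,\mathcal{I}_2$ glues at the interface to a feasible robust flow for $\mathcal{I}$, since both subflows carry the same demand through the interface vertex. In every case $c(f^\lambda) = c(f^\lambda_{|G_1}) + c(f^\lambda_{|G_2})$ for each scenario $\lambda$.

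Next I would prove the additivity $\operatorname{opt}(\mathcal{I}) = \operatorname{opt}(\mathcal{I}_1) + \operatorname{opt}(\mathcal{I}_2)$. For ``$\le$'', take optimal robust flows $\boldsymbol{f}_1,\boldsymbol{f}_2$ for $\mathcal{I}_1,\mathcal{I}_2$; since $G_1,G_2$ are SP digraphs with a unique source and a unique sink, by Lemma~\ref{lem:FlowRelationScenarios} (hence Lemma~\ref{lem:CostSPGraphs}) we may assume $f_i^2 \ge f_i^1$ on every arc and $c(\boldsymbol{f}_i) = c(f_i^2)$; gluing them yields a feasible robust flow $\boldsymbol{f}$ for $\mathcal{I}$ with $f^2 \ge f^1$ everywhere, so $c(\boldsymbol{f}) = c(f^2) = c(f_1^2) + c(f_2^2) = \operatorname{opt}(\mathcal{I}_1) + \operatorname{opt}(\mathcal{I}_2)$. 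For ``$\ge$'', apply Lemmas~\ref{lem:FlowRelationScenarios} and~\ref{lem:CostSPGraphs} to $\mathcal{I}$ itself to get an optimal $\boldsymbol{f}$ with $f^2 \ge f^1$ everywhere and $c(\boldsymbol{f}) = c(f^2)$; then $c(\boldsymbol{f}_{|G_i}) = c(f^2_{|G_i})$, so $\operatorname{opt}(\mathcal{I}) = c(f^2) = c(f^2_{|G_1}) + c(f^2_{|G_2}) = c(\boldsymbol{f}_{|G_1}) + c(\boldsymbol{f}_{|G_2}) \ge \operatorname{opt}(\mathcal{I}_1) + \operatorname{opt}(\mathcal{I}_2)$. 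Given additivity, the biconditional follows: for ``$\Leftarrow$'', gluing optimal subsolutions produces a feasible flow of cost $\operatorname{opt}(\mathcal{I}_1) + \operatorname{opt}(\mathcal{I}_2) = \operatorname{opt}(\mathcal{I})$; for ``$\Rightarrow$'', working with the scenario-monotone optimal representative provided by Lemma~\ref{lem:FlowRelationScenarios} (so that $c(\boldsymbol{f}) = c(\boldsymbol{f}_{|G_1}) + c(\boldsymbol{f}_{|G_2})$), the identity $c(\boldsymbol{f}_{|G_1}) + c(\boldsymbol{f}_{|G_2}) = \operatorname{opt}(\mathcal{I}_1) + \operatorname{opt}(\mathcal{I}_2)$ combined with $c(\boldsymbol{f}_{|G_i}) \ge \operatorname{opt}(\mathcal{I}_i)$ forces $c(\boldsymbol{f}_{|G_i}) = \operatorname{opt}(\mathcal{I}_i)$ for both $i$.

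The main obstacle is that the robust objective is a maximum over scenarios, and $\max_{\lambda}\bigl(c(f^\lambda_{|G_1}) + c(f^\lambda_{|G_2})\bigr)$ in general strictly undershoots $\max_{\lambda} c(f^\lambda_{|G_1}) + \max_{\lambda} c(f^\lambda_{|G_2})$ — the ``crossing scenario'' case — so the cost is \emph{not} obviously additive along a series composition and neither direction of the equivalence is purely local. The device that removes this obstacle is precisely Lemmas~\ref{lem:FlowRelationScenarios} and~\ref{lem:CostSPGraphs}: on an SP digraph with a unique source and sink one may restrict attention to robust flows that are monotone in the scenario, i.e. $f^2 \ge f^1$ on every arc, and for such flows the bottleneck scenario is simultaneously the bottleneck scenario in $G$, $G_1$ and $G_2$, which restores additivity of the cost under both gluing and restriction. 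I would also make explicit the routine structural facts used throughout, namely that in a series composition no fixed arc straddles $G_1$ and $G_2$ and that the interface vertex $\sinkSPGraph_1 = \sourceSPGraph_2$ carries zero balance — these are exactly what make ``restrict'' and ``glue'' preserve feasibility and the consistent flow constraints.
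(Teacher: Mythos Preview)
Your proposal is correct and takes essentially the same approach as the paper: both arguments hinge on Lemma~\ref{lem:CostSPGraphs} (via Lemma~\ref{lem:FlowRelationScenarios}) to force the bottleneck scenario to be $\lambda=2$ simultaneously in $G$, $G_1$, and $G_2$, which is exactly what makes the robust cost additive along the series split. The paper proves each implication separately by contradiction under the blanket ``without loss of generality the flows satisfy Lemma~\ref{lem:CostSPGraphs}'', whereas you first isolate the identity $\operatorname{opt}(\mathcal{I})=\operatorname{opt}(\mathcal{I}_1)+\operatorname{opt}(\mathcal{I}_2)$ and then read off both directions---a cosmetic repackaging of the same computation. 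Your explicit remark that the ``$\Rightarrow$'' direction is argued for the scenario-monotone representative matches the paper's WLOG assumption verbatim; in both write-ups the biconditional is really being established for optimal flows of that form, which is all that is used downstream in Lemma~\ref{ClaimPropertySecondFlow}.
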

A proof can be found in Appendix~\ref{Appendix}. 
Example~\ref{exmpl:SPBellmannsPrinciple} in Appendix~\ref{Appendix} shows that the SP property of the digraph is necessary for the truthfulness of Lemma~\ref{lem:BellmanOptikriteriumSP}.
Using Lemma~\ref{lem:BellmanOptikriteriumSP}, we formulate a useful property for an existing optimal robust flow. 
\begin{lemma}\label{ClaimPropertySecondFlow}
Let $G=(V,A=\fix\cup \free)$ be an SP digraph with origin $o$ and target $q$. Further, let $\mathcal{I}=(G,u,c,\boldsymbol{b})$ be a corresponding \ProblemName{} instance with demand $\boldsymbol{d}=(d^1,d^2)$, $d^2\geq d^1$. With respect to cost $c$, let $p$ be a shortest $(\sourceSPGraph,\sinkSPGraph)$-path in subgraph $G-\fix$ with its bottleneck value $u_p=\min_{a\in A(p)}u(a)$. There exists an optimal robust $\boldsymbol{b}$-flow $\boldsymbol{f}=(f^1,f^2)$ for which the following holds true
\begin{align}
		f^2(a)\geq \min \{u_p,d^2-d^1\} & \text{ for all }\ a\in A(p).  \label{property}
\end{align}
\end{lemma}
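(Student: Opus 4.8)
By Lemma~\ref{lem:FlowRelationScenarios} there is an optimal robust $\boldsymbol{b}$-flow $\boldsymbol{f}=(f^1,f^2)$ with $f^2(a)\ge f^1(a)$ for all $a\in A(G)$, and by Lemma~\ref{lem:CostSPGraphs} its cost equals $c(f^2)$. If $d^1=d^2$ then $\min\{u_p,d^2-d^1\}=0$ and the statement is vacuous, so assume $d^2>d^1$. The object I would track is the \emph{excess flow} $g:=f^2-f^1$: it is a non-negative integral flow of value $d^2-d^1$ supported on $G-\fix$ (it vanishes on fixed arcs, where $f^1$ and $f^2$ coincide), and $c(g)=c(f^2)-c(f^1)$, so optimality of $\boldsymbol{f}$ forces $g$ to be a minimum-cost flow of value $d^2-d^1$ in $G-\fix$ for the residual capacities $u-f^1$. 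The plan is to prove the statement by induction on the SP tree $T$ of $G$, in the strengthened form: for every $v\in V(T)$ and every demand $(d^1,d^2)$ with $d^1\le d^2$ on $G_v$, the induced instance has an optimal robust flow whose second component is $\ge\min\{u_p,d^2-d^1\}$ on the shortest $(o_v,q_v)$-path $p$ of $G_v-\fix$.

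\textbf{Base and series case.} If $v$ is a leaf, $G_v$ is a single arc $a$: if $a$ is free then $f^2(a)=d^2\ge d^2-d^1\ge\min\{u_p,d^2-d^1\}$; if $a$ is fixed, the consistent flow constraints force $d^1=d^2$ and there is nothing to prove. If $v$ is an $S$-vertex, $G_v=G_x\cdot G_y$; by Lemma~\ref{lem:BellmanOptikriteriumSP} an optimal flow for $G_v$ restricts to optimal flows on $G_x$ and $G_y$, and, since every flow unit traverses the cut vertex, the demand induced on both $G_x$ and $G_y$ is again $(d^1,d^2)$. Writing $p=p_x\cdot p_y$ with $p_x\subseteq G_x$, $p_y\subseteq G_y$ and $u_p=\min\{u_{p_x},u_{p_y}\}$, I apply the induction hypothesis to $G_x$ and to $G_y$, glue the resulting optimal flows (optimal again by Lemma~\ref{lem:BellmanOptikriteriumSP}), and note that on $A(p_x)$ the second component is $\ge\min\{u_{p_x},d^2-d^1\}\ge\min\{u_p,d^2-d^1\}$, and symmetrically on $A(p_y)$.

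\textbf{Parallel case.} Let $v$ be a $P$-vertex, $G_v=G_1\parallel G_2$. Every $(o_v,q_v)$-path lies entirely in $G_1$ or entirely in $G_2$, so I may assume $p\subseteq G_1$; then $p$ is also the shortest $(o_v,q_v)$-path of $G_1-\fix$. An optimal flow for $G_v$ restricts to optimal flows on $G_1$ and $G_2$ for some demand split with $\delta^1_i:=\Demand{1}{f}{1}{|G_i}\le\Demand{2}{f}{2}{|G_i}=:\delta^2_i$. Once it is known that some optimal robust flow satisfies $\delta^2_1-\delta^1_1\ge\min\{u_p,d^2-d^1\}$, the induction hypothesis for $G_1$ with demand $(\delta^1_1,\delta^2_1)$ yields an optimal $G_1$-flow whose second component is $\ge\min\{u_p,\delta^2_1-\delta^1_1\}\ge\min\{u_p,d^2-d^1\}$ on $A(p)$, and re-inserting it keeps $G_v$ optimal (the two sides of a parallel composition interact only through the demand split). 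So the whole proof reduces to the claim that \emph{among all optimal robust flows one can choose one routing at least $\min\{u_p,d^2-d^1\}$ units of excess through $G_1$}. Here I would take an optimal $\boldsymbol{f}$ maximising $\delta^2_1-\delta^1_1$; if $\delta^2_1-\delta^1_1<\min\{u_p,d^2-d^1\}$ then $\delta^2_2-\delta^1_2>0$, i.e.\ $g$ contains an $(o_v,q_v)$-path $Q\subseteq G_2-\fix$, and I would divert one unit of flow from $Q$ onto $p$. This touches no fixed arc, so the new pair is again a robust flow, and it does not raise the cost because $p$ is a globally shortest path while $c(Q)\ge c(p)$; since it strictly increases $\delta^2_1-\delta^1_1$, this contradicts maximality.

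\textbf{Main obstacle.} The delicate point is the feasibility of this diversion: pushing a unit along all of $p$ can overflow an arc $a\in A(p)$ with $f^2(a)=u(a)$. Such an arc already satisfies $f^2(a)=u(a)\ge u_p\ge\min\{u_p,d^2-d^1\}$, so it needs no extra flow; the work is to reroute around the saturated arcs of $p$ without breaking consistency and without increasing cost. This is exactly where the series--parallel hypothesis is essential --- Example~\ref{exmpl:SPBellmannsPrinciple} shows that an ``N''-shaped subgraph would break it --- and I expect this step to run along the lines of the rerouting in the proof of Lemma~\ref{lem:RedirectionFlow}: decompose the relevant flows into $(o_v,q_v)$-paths, use that two such paths in an SP digraph can be recombined ``without crossing'', and, if necessary, first reroute part of $f^1$ inside $G_1-\fix$ (legitimate because $c(f^1)$ does not enter the objective) to free capacity on the saturated arcs of $p$. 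Once this rerouting is justified, the induction closes and the lemma follows by applying the strengthened statement at the root of $T$.
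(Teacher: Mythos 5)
Your overall architecture matches the paper's: induction over the series--parallel decomposition, a trivial base case, the series case glued via Lemma~\ref{lem:BellmanOptikriteriumSP}, and a parallel case handled through the demand split $(\Demand{}{f}{1}{|G_i},\Demand{}{f}{2}{|G_i})$ together with Lemma~\ref{lem:RedirectionFlow} and the induction hypothesis on $G_1$. The difference is in how the parallel case is closed: the paper distinguishes whether the split excess through $G_1$, namely $d^2_{|G_1}-d^1_{|G_1}$, already reaches $\min\{u_p,d^2-d^1\}$ (then the induction hypothesis alone suffices) and otherwise redirects the entire excess flow --- which has value $d^2-d^1$ and is supported on $G-\fix$ because of the consistent flow constraints --- so that $\min\{u_p,d^2-d^1\}$ units travel along the shortest path $p$, arguing cost-neutrality from the shortest-path property of $p$ in $G-\fix$. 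You instead take an optimal flow maximising $\Demand{}{f}{2}{|G_1}-\Demand{}{f}{1}{|G_1}$ and try to derive a contradiction by diverting one unit of excess from a path $Q\subseteq G_2-\fix$ onto $p$.

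The problem is that this diversion step is precisely the substance of the lemma, and you do not prove it: your ``Main obstacle'' paragraph concedes that pushing a unit along $p$ may violate the capacity constraints on arcs of $p$ that are saturated by $f^2$, and resolves this only with ``I expect this step to run along the lines of the rerouting in the proof of Lemma~\ref{lem:RedirectionFlow}.'' That expectation is not an argument. Observing that a saturated arc $a\in A(p)$ already satisfies $f^2(a)=u(a)\geq\min\{u_p,d^2-d^1\}$ does not rescue the exchange, because flow conservation forbids augmenting $f^2$ only on the unsaturated arcs of $p$; you would have to construct an explicit rerouting in the residual network, and your suggested remedy of first moving part of $f^1$ off the arcs of $p$ is itself delicate, since $f^1$ cannot be altered on fixed arcs without simultaneously altering $f^2$ (consistent flow constraints) and since any such change must also preserve the demand split $(\Demand{}{f}{1}{|G_i},\Demand{}{f}{2}{|G_i})$, whose optimality underpins your appeal to the induction hypothesis and your cost accounting $c(Q)\geq c(p)$. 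Until this rerouting is carried out --- which is where the paper invests its case distinction and its redirection of the excess inside $G-\fix$ --- the parallel case, and hence the induction, is not closed, so the proposal as it stands is incomplete.
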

\begin{proof}
We prove the correctness of the statement by induction on the number of the digraph's arcs $m:=|A|$. 
For the beginning, if we consider a digraph consisting of one arc only, the statement is readily apparent. 
In the next step, we prove the statement for a digraph with $m+1$ arcs, providing that the statement holds true for all digraphs consisting of at most $m$ arcs. 
For this purpose, we distinguish between two cases.
\begin{figure}
	\begin{adjustbox}{max width=1\textwidth, max height=1\textheight}
		\subfloat[]{\label{fig:InductionSeriesCompositionMinFlowValueSP(a)}	\includegraphics{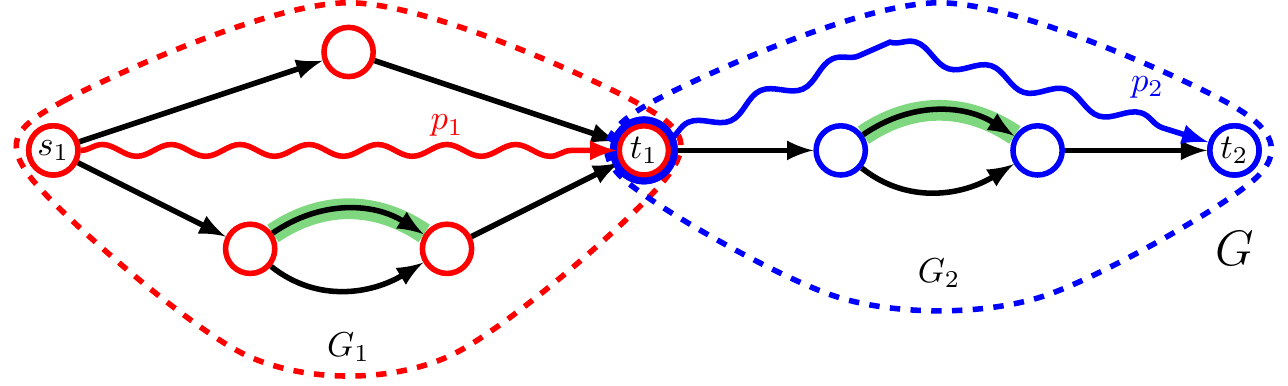}}
		\subfloat[]{\label{fig:InductionSeriesCompositionMinFlowValueSP(b)}	\includegraphics{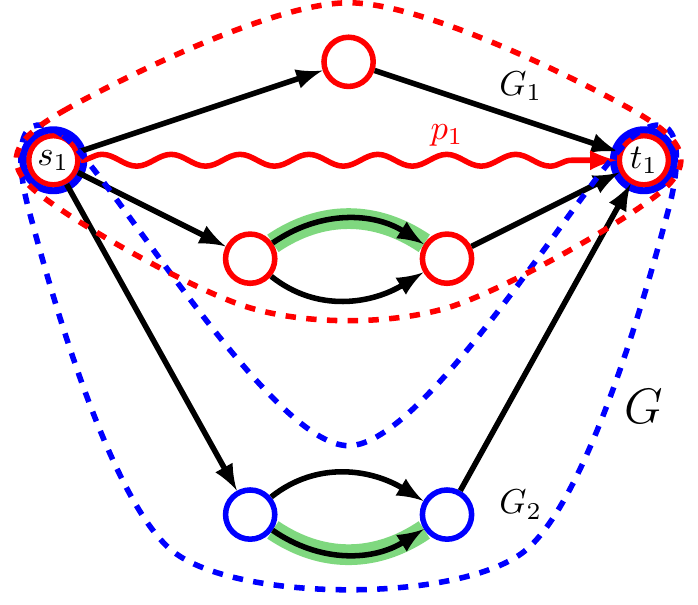}}
	\end{adjustbox}
	\centering\includegraphics{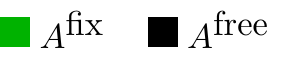}
	\caption{Shortest path $p$ in subgraph $G-\fix$ where digraph $G$ is a series (a) or parallel (b) composition}
\end{figure}

Firstly, we assume that $G$ is a series composition of SP digraphs $G_1$ and $G_2$. 
Therefore, the origin of digraph $G_1$ and the target of digraph $G_2$ are contracted to one vertex that we denote by $w$. 
Due to the composition of digraph $G$, a shortest $(\sourceSPGraph,\sinkSPGraph)$-path $p$ in subgraph $G-\fix$ is composed of a shortest $(\sourceSPGraph,w)$-path $p_{1}$ in subgraph $G_1-\fix$, and a shortest $(w,\sinkSPGraph)$-path $p_2$ in subgraph $G_2-\fix$, see Figure \ref{fig:InductionSeriesCompositionMinFlowValueSP(a)}. 
Considering subgraphs $G_1$ and $G_2$ separately, we obtain the \ProblemName{} instances $\mathcal{I}_1$ and $\mathcal{I}_2$, respectively.
By induction hypothesis there exist an optimal robust flow $\boldsymbol{f}_{|G_1}=(f^1_{|G_1},f^2_{|G_1})$ in subgraph $G_1$ and an optimal robust flow $\boldsymbol{f}_{|G_2}=(f^1_{|G_2},f^2_{|G_2})$ in subgraph $G_2$ satisfying
\begin{align*}
		f^2_{|G_1}(a)\geq \min \{u_{p_1},d^2-d^1\} & \mbox{ for all arcs $a\in A(p_{1})$}, \\
		f^2_{|G_2}(a)\geq \min \{u_{p_2},d^2-d^1\} & \mbox{ for all arcs $a\in A(p_2)$}.
\end{align*}
By Lemma \ref{lem:BellmanOptikriteriumSP}, the composed flow $\boldsymbol{f}=(f^1,f^2)$ with $f^1:=f^1_{|G_1}+f^1_{|G_2}$ and $f^2:=f^2_{|G_1}+f^2_{|G_2}$ is an optimal robust $\boldsymbol{b}$-flow in digraph $G$ where the desired property is still satisfied. 

Secondly, we assume that $G$ is a parallel composition of SP digraphs $G_1$ and $G_2$.
Without loss of generality, let the shortest $(\sourceSPGraph,\sinkSPGraph)$-path $p$ be contained in subgraph $G_1-\fix$, see Figure \ref{fig:InductionSeriesCompositionMinFlowValueSP(b)}. 
Further, let $\boldsymbol{f}=(f^1,f^2)$ be an optimal robust $\boldsymbol{b}$-flow which sends demand $\boldsymbol{d}=(d^1,d^2)$ through digraph $G$ that satisfies without loss of generality the property of Lemma~\ref{lem:RedirectionFlow}, i.e., $\Demand{}{f}{2}{|G_i} - \Demand{}{f}{1}{|G_i}\geq 0$ for $G_i$, $i\in \{1,2\}$.
If the optimal flow $\boldsymbol{f}$ does not satisfy the property~\eqref{property}, applying the following procedure leads to the desired result.
We consider the subgraphs $G_1$ and $G_2$ separately, resulting in \ProblemName{} instances $\mathcal{I}_1$ and $\mathcal{I}_2$, respectively.
To define how much demand $\boldsymbol{d}_{|G_i}$ is supposed to be sent through subgraph $G_i$ of instance $\mathcal{I}_i$, we exploit the partition of demand $\boldsymbol{d}$ of the optimal flow $\boldsymbol{f}$, i.e., $\boldsymbol{d}_{|G_i}=(d^1_{|G_i}, d^2_{|G_i}):=(\Demand{}{f}{1}{|G_i},\Demand{}{f}{2}{|G_i})$ for $i\in \{1,2\}$. 
Considering subgraph $G_1$, by induction hypothesis there exists an optimal robust flow $\boldsymbol{\tilde{f}}=(\tilde{f}^1,\tilde{f}^2)$ which sends demand $\boldsymbol{d}_{|G_1}=(d^1_{|G_1}, d^2_{|G_1})$ and satisfies
\begin{align*}
		\tilde{f}^2(a)	\geq \min \{ u_p, d^2_{|G_1}- d^1_{|G_1} \}  & \mbox{ for all arcs $a\in A(p)$}. 
\end{align*}
Further, let an optimal robust flow $\boldsymbol{\hat{f}}$ be given which sends demand $\boldsymbol{d}_{|G_2}$ through subgraph $G_2$. 
By composing flows $\boldsymbol{\tilde{f}}$ and $\boldsymbol{\hat{f}}$, we obtain a robust $\boldsymbol{b}$-flow $\boldsymbol{\overline{f}}=(\overline{f}^1,\overline{f}^2)$ with scenario flows $\overline{f}^1:=\tilde{f}^1+\hat{f}^1$ and $\overline{f}^2:=\tilde{f}^2+\hat{f}^2$ for instance $\mathcal{I}$. 
Flow $\boldsymbol{\overline{f}}$ is optimal as there exists an optimal robust flow with the same partition $\boldsymbol{d}_{|G_1}$ and $\boldsymbol{d}_{|G_2}$ of demand $\boldsymbol{d}$ between subgraphs $G_1$ and $G_2$, and as flows $\boldsymbol{\tilde{f}}$ and $\boldsymbol{\hat{f}}$ are optimal themselves.
It remains to prove that $\overline{f}^2(a)\geq \min \{ u_p ,  d^2-d^1  \}$ holds for all arcs $a\in A(p)$. 
We distinguish between the following two cases. 

Firstly, we consider the case where $d^2_{|G_1}- d^1_{|G_1}\geq \min \{ u_p ,  d^2-d^1  \}$ holds true. 
As $\overline{f}^2(a)=\tilde{f}^2(a)$ holds for all arcs $a\in A(G_1)$ by construction, the desired property results immediately for all arcs $a\in A(p)\subseteq A(G_1)$ as shown by the following
\begin{align*}
		\overline{f}^2(a)=\tilde{f}^2(a)	
						&\geq		\min \{ u_p, d_{|G_1}^2- d_{|G_1}^1 \}\\
						&\geq  \min \{ u_p,\min \{ u_p, d^2-d^1\} \} \\
						&=  \min \{ u_p, d^2-d^1 \}.
\end{align*}
Secondly, we consider the case where $d^2_{|G_1}- d^1_{|G_1}< \min \{ u_p ,  d^2-d^1  \}$ holds true. 
Assume $\overline{f}^2(a)< \min \{ u_p , d^2-d^1     \} $ is true for one arc $a\in A(p)\subseteq A(G_1)$. 
We redirect the last scenario flow $\overline{f}^2$ of robust flow $\boldsymbol{\overline{f}}$ such that demand of $\min \{ u_p , d^2-d^1     \}$ is sent along the shortest path $p$ in subgraph $G-\fix$. 
As demand $d^2-d^1$ needs to be sent in subgraph $G-\fix$ in any case, and $A(p)\subseteq \free$ holds, the resulting robust flow is still feasible, satisfies the desired property~\eqref{property}, and its cost is not increased. 
\end{proof}
Based on the derived knowledge by the presented lemmas,  we can finally present an algorithm that solves the \ProblemName{} problem on networks based on SP digraphs with a unique source and a unique sink. 
\begin{algorithm}[H]
\begin{lyxlist}{Method:}
\item [{Input:}] SP digraph $G=(V,A=A^{\text{fix}}\cup A^{\text{free}})$, instance $\mathcal{I}=(G,u,c,\boldsymbol{b})$, demand $\boldsymbol{d}$ 
\item [{Output:}] Robust minimum cost $\boldsymbol{b}$-flow $\boldsymbol{f}$
\item [{Method:}]~	
\begin{algorithmic}[1]
		\State{Compute a minimum cost flow $f^\prime$ that sends demand $d^2-d^1$ in subgraph $G-\fix$ with respect to capacity $u$ and cost $c$}
		\State{Let $u^\prime$ be the capacity which results from reducing the capacity $u$ of all arcs of digraph $G$ that are used by flow $f^\prime$.
		By means of the Greedy Algorithm of Bein et al.~(\citeyear{bein1985minimum}) compute a minimum cost flow $f^{\prime\prime}$ that sends demand $d^1$ in digraph $G$  with respect to capacity $u^\prime$ and cost $c$, i.e., flow is sent along shortest paths that still have positive bottleneck values}
		\State{Set $f^1:=f^{\prime\prime}$ and $f^2:=f^\prime+ f^{\prime\prime}$}\\
		\Return $\boldsymbol{b}$-flow $\boldsymbol{f}=(f^1,f^2)$
\end{algorithmic}
\end{lyxlist}
\caption{}\label{alg:SPUniqueSourceUniqueSinkCapacities}
\end{algorithm}
Basically, Algorithm~\ref{alg:SPUniqueSourceUniqueSinkCapacities} computes a flow by sending the excess demand in subgraph $G-\fix$ first, and subsequently, by sending the demand through digraph $G$ which is sent in both scenarios. 
Composing the computed flows to a robust flow leads to an optimal solution obtained in polynomial time as the following theorem shows.
\begin{theorem}
Let $G$ be an SP digraph, and let $\mathcal{I}=(G,u,c,\boldsymbol{b})$ be a corresponding \ProblemName{} instance with demand $\boldsymbol{d}=(d^1,d^2)$, $d^2\geq d^1$.
Algorithm~\ref{alg:SPUniqueSourceUniqueSinkCapacities} computes an optimal robust $\boldsymbol{b}$-flow for demand $\boldsymbol{d}$ in polynomial time. 
\end{theorem}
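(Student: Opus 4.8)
The plan is to verify feasibility, then optimality, then the polynomial running time. Feasibility is routine: by construction $f''$ is a $d^1$-flow in $G$ with respect to $u'$ and $f'$ is a $(d^2-d^1)$-flow using exactly the capacity removed when forming $u'$, so $f^2=f'+f''$ respects $u$, routes $d^1+(d^2-d^1)=d^2$ units from $\sourceSPGraph$ to $\sinkSPGraph$, and $f^1=f''$ routes $d^1$ units; both are $b^\lambda$-flows. Since $f'$ is a flow in $G-\fix$ it is zero on every fixed arc, hence $f^1(a)=f^2(a)$ for all $a\in\fix$, the consistent flow constraints hold, and $\boldsymbol{f}=(f^1,f^2)$ is a robust $\boldsymbol{b}$-flow. (If $f'$ or $f''$ fails to exist, the same decomposition shows the instance has no robust $\boldsymbol{b}$-flow.)

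Next I would reduce optimality to a statement about $c(f^2)$ only. For the constructed flow $f^2\ge f^1$ coordinatewise and costs are nonnegative, so $c(\boldsymbol{f})=c(f^2)=c(f')+c(f'')$. For the lower bound, by Lemma~\ref{lem:FlowRelationScenarios} there is an optimal robust $\boldsymbol{b}$-flow $(g^1,g^2)$ with $g^2\ge g^1$, hence of cost $c(g^2)$ (recovering the content of Lemma~\ref{lem:CostSPGraphs}); then $h:=g^2-g^1$ is a nonnegative flow of value $d^2-d^1$ that vanishes on fixed arcs, i.e.\ a flow in $G-\fix$, with $g^1+h\le u$ and $c(g^2)=c(g^1)+c(h)$. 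Conversely any integral $d^1$-flow $g^1$ in $G$ and integral $(d^2-d^1)$-flow $h$ in $G-\fix$ with $g^1+h\le u$ yield the robust $\boldsymbol{b}$-flow $(g^1,g^1+h)$ of cost $c(g^1)+c(h)$. So the optimum of \ProblemName{} equals $\min\{c(g^1)+c(h)\}$ over all such pairs, and it suffices to show $(f'',f')$ attains this minimum.

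The core is to show one may take $h$ to be a minimum-cost $(d^2-d^1)$-flow in $G-\fix$: once $h=f'$ is fixed, minimizing $c(g^1)$ is an ordinary MCF instance on the SP network $(G,u-f')=(G,u')$, which Bein et al.'s shortest-augmenting-path greedy solves exactly, giving $g^1=f''$ and total cost $c(f')+c(f'')$. I would prove this by induction on $|A(G)|$ along the series-parallel decomposition. A single arc is immediate. In a series composition $G=G_1\cdot G_2$, Lemma~\ref{lem:BellmanOptikriteriumSP} splits the problem into the two subgraphs, where the induction hypothesis applies. In a parallel composition, Lemma~\ref{ClaimPropertySecondFlow} yields an optimal robust flow routing $\min\{u_p,d^2-d^1\}$ units of the excess along a globally shortest free path $p$, and the parallel-composition argument in its proof, using the monotonicity $\delta(f^2_{|H})\ge\delta(f^1_{|H})$ on subgraphs $H$ from Lemma~\ref{lem:RedirectionFlow}, moves the excess onto the branch containing $p$ without increasing cost, after which the induction hypothesis applies on that branch. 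Iterating, the excess flow of some optimal robust flow is a successive-shortest-free-path flow in $G-\fix$, hence of the same cost as $f'$, and the same case analysis shows the residual network $(G,u-f')$ still admits a $d^1$-flow, so $c(f^2)=c(f')+c(f'')$ is optimal.

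The main obstacle is precisely the interaction between the two flow layers: a priori a costlier routing of the excess demand could leave cheaper capacity for the $d^1$-flow and so beat the greedy ``excess first, base second'' ordering, and on general acyclic digraphs this is exactly what happens (Example~\ref{expl:ExampleCost}, Corollary~\ref{cor:cost}). Ruling it out is where series-parallelity is essential: the Bellman-type Lemma~\ref{lem:BellmanOptikriteriumSP} and the redirection Lemma~\ref{lem:RedirectionFlow} are what permit rearranging an optimal robust flow branch by branch so that the excess always uses shortest free paths maximally. The running time is then immediate: Step~1 is a single MCF computation in $G-\fix$, Step~2 is Bein et al.'s algorithm in $(G,u')$, and Step~3 is $\mathcal{O}(|A(G)|)$ arithmetic, so the total is polynomial in the input size.
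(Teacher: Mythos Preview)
Your approach rests on the same key lemmas as the paper (Lemmas~\ref{lem:RedirectionFlow}--\ref{ClaimPropertySecondFlow}) and reaches the same conclusion, but the organization differs. The paper argues by induction on the excess demand $k=d^2-d^1$: at each step it invokes Lemma~\ref{ClaimPropertySecondFlow} to peel off $\overline{u}=\min\{u_p,k+1\}$ units of the second-scenario flow along a shortest $(\sourceSPGraph,\sinkSPGraph)$-path $p$ in $G-\fix$, reduces the capacities on $p$ and the demand $d^2$ by $\overline{u}$, and applies the induction hypothesis to the residual instance; the base case $k=0$ collapses to a single call of Bein et al.

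Your reformulation in terms of pairs $(g^1,h)$ with $h=g^2-g^1$ a $(d^2-d^1)$-flow in $G-\fix$ is a clean way to expose why the algorithm's two phases decouple, and the paper does not spell this out. However, your stated induction on $|A(G)|$ does not close by itself in the parallel case: after one application of Lemma~\ref{ClaimPropertySecondFlow} you are still in the same graph $G$, only with reduced excess demand or reduced capacity along $p$, so the hypothesis on a graph with fewer arcs does not apply. The word ``iterating'' in your parallel step is exactly the paper's induction on $k$, and that is what actually carries the argument. So the two proofs are effectively the same; the paper's choice of induction variable ($k$ rather than $|A(G)|$) is what makes the parallel step go through without a nested loop, while your pair decomposition $(g^1,h)$ is a clearer motivation for why the excess can be chosen independently as a minimum-cost flow in $G-\fix$.
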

\begin{proof}
We prove the statement by induction on the excess demand, i.e., $k:=d^2-d^1\geq 0$. 
For the beginning, we consider the case where $k=0$ holds. 
As the excess demand is zero, the same amount of flow needs to be sent in both scenarios. 
Thus, sending the excess demand in subgraph $G-\fix$ in step $1$ is omitted.
In step $2$, a minimum cost flow that sends demand $d^1$ through digraph $G$ is computed by the Greedy Algorithm of Bein et al.~(\citeyear{bein1985minimum}).
A feasible robust flow results whose scenario flows are equal. 
The robust flow is optimal by the correctness of the Greedy Algorithm of Bein et al.

For the induction step, let $\boldsymbol{\tilde{f}}=(\tilde{f}^1,\tilde{f}^2)$ be an optimal robust $\boldsymbol{b}$-flow for instance $\mathcal{I}$ which satisfies without loss of generality the properties of Lemmas~\ref{lem:RedirectionFlow}~--~\ref{ClaimPropertySecondFlow}, i.e., in particular, $\tilde{f}^2(a)\geq \overline{u}:= \min \{u_p,k+1\}$ for $a\in A(p)$.
We consider the \ProblemName{} instance $\widehat{\mathcal{I}}:=(G,\hat{u},c,\boldsymbol{\hat{b}})$ with the adjusted capacity $\hat{u}$ and balances $\boldsymbol{\hat{b}}:=(b^1,\hat{b}^2)$. 
Capacity $\hat{u}$ is obtained by reducing the capacity $u$ of all arcs of path $p$ by $\overline{u}$, and accordingly updating the last scenario balances $b^2$ of the source and sink results in the new balances $\boldsymbol{\hat{b}}:=(b^1,\hat{b}^2)$. 
Further, we obtain the new demand $\boldsymbol{\hat{d}}=(d^1,\hat{d}^2)$ with $\hat{d}^2:=d^2-\overline{u} $. 
As the excess demand is less or equal to $k$ in instance $\widehat{\mathcal{I}}$, by induction hypothesis Algorithm~\ref{alg:SPUniqueSourceUniqueSinkCapacities} computes an optimal robust $\boldsymbol{\hat{b}}$-flow $\boldsymbol{\hat{f}}=(\hat{f}^1, \hat{f}^2)$ that sends demand $\boldsymbol{\hat{d}}=(d^1, \hat{d}^2)$.  
We note that robust flow $\boldsymbol{\hat{f}}$ also satisfies the properties of Lemmas~\ref{lem:RedirectionFlow}~--~\ref{ClaimPropertySecondFlow}.
In summary, we obtain that $\hat{f}^2$ is a flow sending demand $\hat{d}^2=d^2-\overline{u}$ for instance $\widehat{\mathcal{I}}$, and by assumption, $\tilde{f}^2$ is an optimal last scenario flow sending demand $d^2$ for instance $\mathcal{I}$. 
Furthermore, by assumption flow $\tilde{f}^2$ sends $\overline{u}$ demand along the shortest path $p$ in subgraph $G-\fix$. 
Overall, we obtain for the cost of flow $\hat{f}^2$ the following upper bound
\begin{align*}
c(\hat{f}^2)\leq c(\tilde{f}^2) - \overline{u} \cdot c(p). 
\end{align*}
By reformulating, we obtain
\begin{align*}
c(\hat{f}^2)+ \overline{u} \cdot c(p)\leq c(\tilde{f}^2), 
\end{align*} 
and together with the condition that the cost of both flows are determined by the last scenario flows, the following holds true
\begin{align*}
c(\boldsymbol{\hat{f}}) + \overline{u} \cdot c(p)\leq c(\boldsymbol{\tilde{f}}). 
\end{align*}
Consequently, flow $\boldsymbol{f}=(f^1,f^2)$ with scenario flows $f^1:=\hat{f}^1$ and $f^2:=\hat{f}^2 + \overline{f}$ is an optimal robust $\boldsymbol{b}$-flow sending demand $\boldsymbol{d}$ where flow $\overline{f}$ is defined by $\overline{f}(a):=\overline{u}$ for all arcs $a\in A(p)$. 
Moreover, flow $\boldsymbol{f}=(f^1,f^2)$ complies with the flow computed by the Algorithm~\ref{alg:SPUniqueSourceUniqueSinkCapacities} for instance $\mathcal{I}$. 

Finally, considering the algorithm's runtime, we compute a minimum cost flow that sends demand $d^2-d^1$ by the Minimum Mean Cycle-Cancel Algorithm in $\mathcal{O}(|A|^3|V|^2\log|V|)$ time~\citep{korte2012combinatorial}. Subsequently, we compute a flow that sends demand $d^1$ by the Greedy Algorithm of Bein et al.~(\citeyear{bein1985minimum}) in $\mathcal{O}(|A|\cdot|V|+|A|\log |A|)$ time. In total, computing a robust minimum cost $\boldsymbol{b}$-flow takes $\mathcal{O}(|A|^3|V|^2\log|V|+|A|\cdot|V|+|A|\log |A|)$ time. 
\end{proof}
Note that we cannot use the Greedy Algorithm of Bein et al.~(\citeyear{bein1985minimum}) in the first step of Algorithm~\ref{alg:SPUniqueSourceUniqueSinkCapacities} as $G-\fix$ might not be an SP digraph.
\section{Conclusion}
\label{Sec:Conclusion}
In this paper, we introduced the \ProblemName{} problem which is an extension of the MCF problem considering equal flow requirements and demand uncertainty.
We presented structural results which differentiate from well known results of the MCF problem. 
In particular, we showed that Dantzig and Fulkerson's Integral Flow Theorem~\citep{korte2012combinatorial} does not hold anymore. 
Furthermore, we proved that finding a feasible solution to the \ProblemName{} problem is strongly $\mathcal{NP}$-complete on acyclic digraphs even if a network with a unique source and a unique sink is considered for two scenarios only. 
However, we proved that the decision version of the \ProblemName{} problem is only weakly $\mathcal{NP}$-complete on SP digraphs, and proposed a pseudo-polynomial DP. 
For the special case of networks based on SP digraphs with a unique source and a unique sink, we provided an algorithm running in polynomial time. 

For future work, we will study the \ProblemName{} problem for further graph classes as digraphs with bounded treewidth. 

\bibliography{Quellen.bib}   
\renewcommand{\appendixname}{}
\renewcommand*{\thesection}{\appendixname~\Alph{section}}
\appendix
\section{Appendix}
\label{Appendix}
\RedirectionFlow*
\begin{proof}
	Let $\boldsymbol{f}=(f^1,f^2)$ be an optimal robust $\boldsymbol{b}$-flow which sends demand $\boldsymbol{d}=(d^1,d^2)$ through digraph $G$.
	We distinguish whether digraph $G$ is a series or parallel composition of subgraphs $G_1$ and $G_2$. 
	For the case that digraph $G$ is serially composed the validity of the statement is apparent as we consider a network with a unique source and a unique sink, and $d^2\geq d^1$ holds. 
	In case that digraph $G$ is parallelly composed the following is true.
	If $d^2=d^1$ holds, the statement is again apparent. 
	Otherwise, if $d^2> d^1$ holds, $\Demand{}{f}{2}{|G_i}> \Demand{}{f}{1}{|G_i}$ also holds true for at least one of the subgraphs $G_i$, $i\in \{1,2\}$.
	Without loss of generality, let $G_1$ be the subgraph for which $\Demand{}{f}{2}{|G_1}> \Demand{}{f}{1}{|G_1}$ holds true, and in return, assume that $\Demand{}{f}{2}{|G_2}< \Demand{}{f}{1}{|G_2}$ holds. 
	In the following, we provide a procedure by which we redirect a proportion of the scenario flows $f^1$ or $f^2$ such that the desired property holds. 
	
	In the first step, we define two new scenario flows $\tilde{f}$ and $\hat{f}$ that send demand $\Demand{}{\tilde{f}}{}{|G}=d^1$ and $\Demand{}{\hat{f}}{}{|G}=d^2$ through digraph $G$, respectively.
	Flow $\tilde{f}$ corresponds to a first scenario flow which is obtained by redirecting a proportion of flow $f^1$ from subgraph $G_2$ to subgraph $G_1$ such that $\Demand{}{f}{2}{|G_2}\geq \Demand{}{\tilde{f}}{}{|G_2}$ holds true, see Figure~\ref{fig:ExmplRedirectingParallel2}. 
	\begin{figure}
		\begin{adjustbox}{max width=1\textwidth, max height=1\textheight}
			\centering	\includegraphics{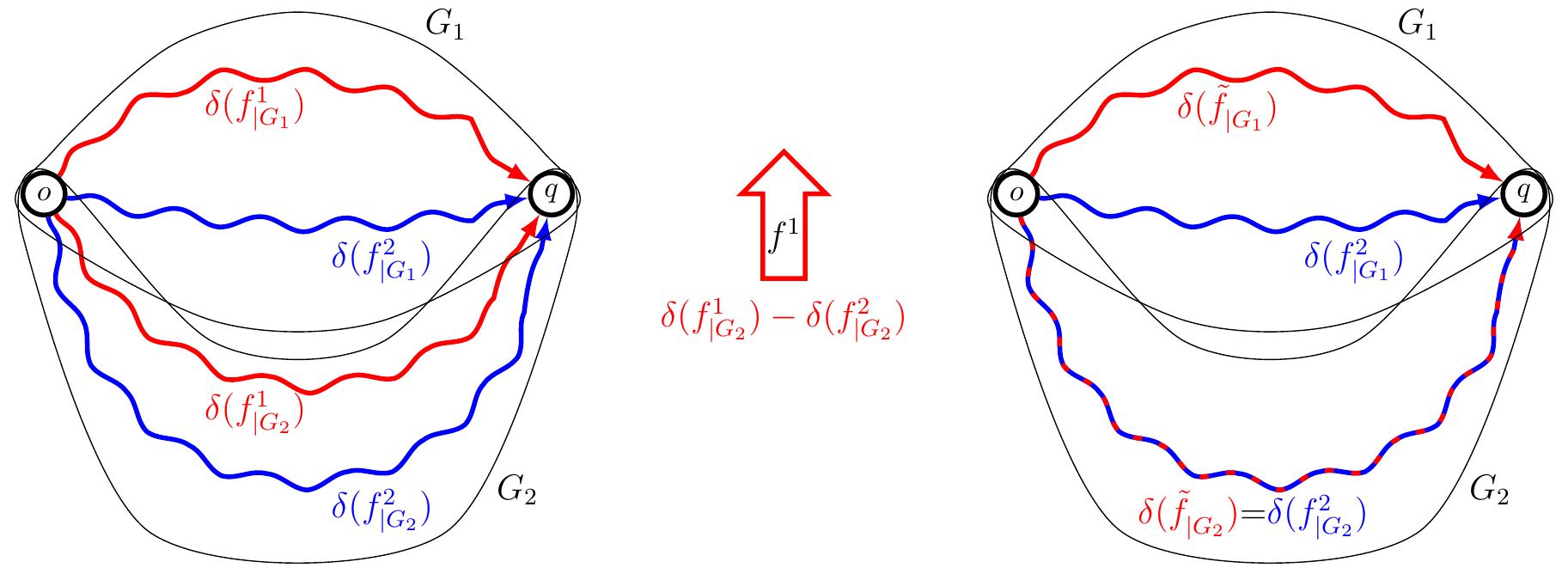}
		\end{adjustbox}
		\caption{Shifting $\Demand{}{f}{1}{|G_2}- \Demand{}{f}{2}{|G_2}$ demand of the first scenario flow $f^1$ from $G_2$ to $G_1$ leads to a new flow $\tilde{f}$ that satisfies the desired property}
		\label{fig:ExmplRedirectingParallel2}
	\end{figure}
	More precisely, flow $\tilde{f}:= \tilde{f}_{|G_1} + \tilde{f}_{|G_2}$ is defined in such a way that demand $\tilde{d}_1:=\Demand{}{\tilde{f}}{}{|G_1}=\Demand{}{f}{1}{|G_1} + (\Demand{}{f}{1}{|G_2}- \Demand{}{f}{2}{|G_2})$ is sent through subgraph $G_1$, and demand $\tilde{d}_2:=\Demand{}{\tilde{f}}{}{|G_2}=\Demand{}{f}{2}{|G_2}$ through subgraph $G_2$. 
	Considering subgraph $G_1$, by assumption and definition it holds $\Demand{}{f}{1}{|G_1}< \tilde{d}_1 < \Demand{}{f}{2}{|G_1} $.
	Following Lemma \ref{lem:UniqueSourceUniqueSinkLimitationCost}, we compute a robust flow $\boldsymbol{\overline{f}}=(f^1_{|G_1},\tilde{f}_{|G_1},f^2_{|G_1})$ sending demand $\boldsymbol{\overline{d}}=(\Demand{}{f}{1}{|G_1},\tilde{d}_1, \Demand{}{f}{2}{|G_1}) $ through subgraph $G_1$ and causing cost of $c(\boldsymbol{\overline{f}})=\max\{c(f^1_{|G_1}), c(f^2_{|G_1})\}$. 
	We further set $\tilde{f}_{|G_2}:= f^2_{|G_2}$ such that the overall cost of flow $\tilde{f}$ is estimated as follows
	\begin{align*}
	c(\tilde{f}_{|G_1})&\leq\max\{c(f^1_{|G_1}), c(f^2_{|G_1})\},\\
	c(\tilde{f}_{|G_2})&= c(f^2_{|G_2}).
	\end{align*}
	
	Flow $\hat{f}$ in turn corresponds to a last scenario flow which is obtained by redirecting a proportion of flow $f^2$ from subgraph $G_1$ to subgraph $G_2$ such that $\Demand{}{\hat{f}}{}{|G_2}\geq \Demand{}{f}{1}{|G_2}$ holds true, see Figure~\ref{fig:ExmplRedirectingParallel1}.
	\begin{figure}
		\begin{adjustbox}{max width=1\textwidth, max height=1\textheight}
			\includegraphics{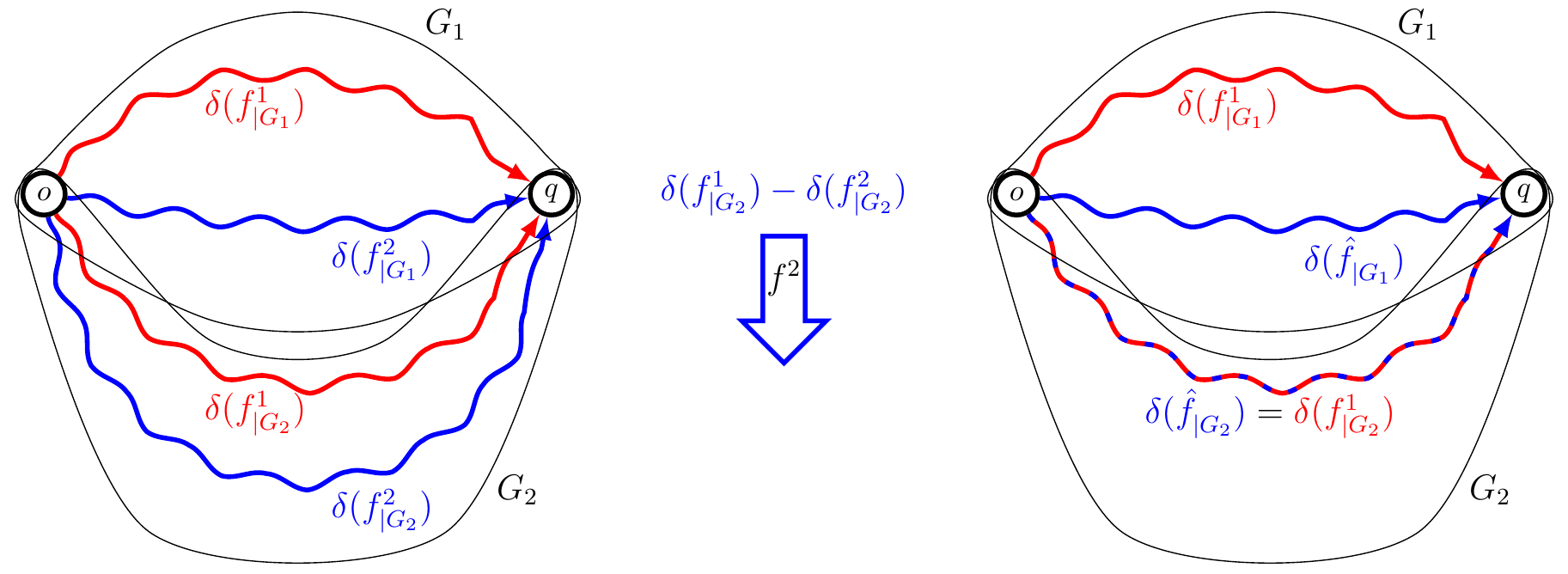}
		\end{adjustbox}
		\caption{Shifting $\Demand{}{f}{1}{|G_2}- \Demand{}{f}{2}{|G_2}$ demand of the last scenario flow $f^2$ from $G_1$ to $G_2$ leads to a new flow $\hat{f}$ that satisfies the desired property}
		\label{fig:ExmplRedirectingParallel1}
	\end{figure}
	More precisely, we define the scenario flow $\hat{f}:= \hat{f}_{|G_1} + \hat{f}_{|G_2}$ such that demand $\hat{d}_1:=\Demand{}{\hat{f}}{}{|G_1}=\Demand{}{f}{2}{|G_1} - (\Demand{}{f}{1}{|G_2}- \Demand{}{f}{2}{|G_2})$ is sent through subgraph $G_1$, and demand $\hat{d}_2:=\Demand{}{\hat{f}}{}{|G_2} =\Demand{}{f}{1}{|G_2}$ through subgraph $G_2$. 
	Considering subgraph $G_1$, by assumption and definition it holds $\Demand{}{f}{1}{|G_1}< \hat{d}_1 < \Demand{}{f}{2}{|G_1} $. 
	Following Lemma \ref{lem:UniqueSourceUniqueSinkLimitationCost}, we compute a robust flow $\boldsymbol{\underline{f}}=(f^1_{|G_1},\hat{f}_{|G_1},f^2_{|G_1})$ sending demand $\boldsymbol{\underline{d}}=(\Demand{}{f}{1}{|G_1},\hat{d}_1, \Demand{}{f}{2}{|G_1}) $ in subgraph $G_1$ and causing cost of $c(\boldsymbol{\underline{f}})=\max\{c(f^1_{|G_1}), c(f^2_{|G_1})\}$. 
	We further set $\hat{f}_{|G_2}:= f^1_{|G_2}$ and obtain the following estimations of the cost   
	\begin{align*}
	c(\hat{f}_{|G_1})&\leq\max\{c(f^1_{|G_1}), c(f^2_{|G_1})\}, \\
	c(\hat{f}_{|G_2})&= c(f^1_{|G_2}).
	\end{align*}
	
	In the next step, we construct two new robust $\boldsymbol{b}$-flows $\boldsymbol{f_a}:=(f^1,\hat{f})$ and $\boldsymbol{f_b}:=(\tilde{f},f^2)$ which are obtained by redirecting the scenario flows of the optimal robust $\boldsymbol{b}$-flow $\boldsymbol{f}$. 
	The robust flows $\boldsymbol{f_a}$ and $\boldsymbol{f_b}$ are feasible by construction of flows $\hat{f}$ and $\tilde{f}$ and each sends demand $\boldsymbol{d}$ through digraph $G$.
	If we show that $\min \{c(\boldsymbol{f_a}), c(\boldsymbol{f_b}) \}\leq c(\boldsymbol{f})$ holds true, we can redirect the optimal robust flow $\boldsymbol{f}$ analogous to either robust flow $\boldsymbol{f_a}$ or $\boldsymbol{f_b}$ such that the desired property is satisfied but the cost is not changed. 
	We distinguish whether the first or last scenario flow of the optimal robust solution $\boldsymbol{f}$ is more expensive.
	Firstly, we assume that the first scenario flow $f^1$ is more expensive than the last scenario flow $f^2$, i.e., $c(\boldsymbol{f})=\max\{c(f^1), c(f^2)\}=c(f^1)$.
	Further, we distinguish between the following two cases. 
	\begin{itemize}
		\item[] \underline{1. Case:} $c(f^1_{|G_1})\geq c(f^2_{|G_1})$\newline 
		To prove the statement $\min \{c(\boldsymbol{f_a}), c(\boldsymbol{f_b}) \}\leq c(\boldsymbol{f})$, it is sufficient to prove the statement $c(\boldsymbol{f_a})\leq c(\boldsymbol{f})$.
		By equivalent transformation we obtain 
		\begin{align*}
		&&c(\boldsymbol{f_a})				&\leq c(\boldsymbol{f})&\\
		\Leftrightarrow &&\max \{ c(f^1), c(\hat{f})\} 	&\leq \max \{ c(f^1), c(f^2)\}&\\
		\Leftrightarrow &&\max \{ c(f^1), c(\hat{f})\} 	&\leq  c(f^1).
		\end{align*}
		Consequently, we only need to prove that $c(\hat{f}) \leq c(f^1)$ holds. 
		Using the definition and cost estimation of flow $\hat{f}$, we can alternatively show the following
		\begin{align*}
		\max\{c(f^1_{|G_1}), c(f^2_{|G_1})\} + c(f^1_{|G_2})
		\leq c(f^1_{|G_1})+ c(f^1_{|G_2}).
		\end{align*}
		Equivalent transforming results in
		\begin{align*}
			&&	\max\{c(f^1_{|G_1}), c(f^2_{|G_1})\}  + c(f^1_{|G_2})&\leq c(f^1_{|G_1})+ c(f^1_{|G_2}) &\\
			\Leftrightarrow 				&&\max\{c(f^1_{|G_1}),  c(f^2_{|G_1})\} &\leq c(f^1_{|G_1}) &\\
			\Leftrightarrow 				&&c(f^1_{|G_1}) &\leq c(f^1_{|G_1}), 
		\end{align*}
		which is a true statement. 
		\item[] \underline{2. Case:} $c(f^1_{|G_1})< c(f^2_{|G_1})$\newline 
		To prove the statement $\min \{c(\boldsymbol{f_a}), c(\boldsymbol{f_b}) \}\leq c(\boldsymbol{f})$, it is sufficient to prove the statement $c(\boldsymbol{f_b})\leq c(\boldsymbol{f})$.
		For this case, the cost of the robust flow $\boldsymbol{f_b}=(\tilde{f}, f^2)$ is determined by flow $f^2$ as shown by the following
		\begin{align*}
		c(\tilde{f})
		=c(\tilde{f}_{|G_1})+c(\tilde{f}_{|G_2})
		&\leq \max\{c(f^1_{|G_1}), c(f^2_{|G_1})\}+c(f^2_{|G_2})\\
		&= c(f^2_{|G_1}) + c(f^2_{|G_2})
		=c(f^2).
		\end{align*}
		Accordingly, equivalent transformation results in
		\begin{align*}
		&&c(\boldsymbol{f_b})				&\leq c(\boldsymbol{f})&\\
		\Leftrightarrow &&\max \{ c(\tilde{f}), c(f^2)\} 	&\leq \max \{ c(f^1), c(f^2)\}&\\
		\Leftrightarrow && c(f^2) 	&\leq  c(f^1),
		\end{align*}
		which is a true statement for the present case. 
	\end{itemize}
	Secondly, we assume that the last scenario flow $f^2$ is more expensive than the first scenario flow $f^1$, i.e., $c(\boldsymbol{f})=\max\{c(f^1), c(f^2)\}=c(f^2)$.
	Further, we distinguish between the following two cases.
	\begin{itemize}
		\item[] \underline{1. Case:} $c(f^2_{|G_1})\geq c(f^1_{|G_1})$\newline 
		To prove the statement $\min \{c(\boldsymbol{f_a}), c(\boldsymbol{f_b}) \}\leq c(\boldsymbol{f})$, it is sufficient to prove the statement $c(\boldsymbol{f_b})\leq c(\boldsymbol{f})$.
		By equivalent transformation we obtain 
		\begin{align*}
		&&c(\boldsymbol{f_b})				&\leq c(\boldsymbol{f})&\\
		\Leftrightarrow &&\max \{ c(\tilde{f}), c(f^2)\} 	&\leq \max \{ c(f^1), c(f^2)\}&\\
		\Leftrightarrow &&\max \{ c(\tilde{f}), c(f^2)\} 	&\leq  c(f^2).
		\end{align*}
		Consequently, we only need to prove that $c(\tilde{f}) \leq c(f^2)$ holds. 
		Using the definition and cost estimation of flow $\tilde{f}$, we can alternatively show the following
		\begin{align*}
		\max\{c(f^1_{|G_1}), c(f^2_{|G_1})\} + c(f^2_{|G_2})
		\leq c(f^2_{|G_1})+ c(f^2_{|G_2}).
		\end{align*}
		Equivalent transforming results in
		\begin{align*}
					&& \max\{c(f^1_{|G_1}), c(f^2_{|G_1})\}  + c(f^2_{|G_2})&\leq c(f^2_{|G_1})+ c(f^2_{|G_2}) &\\
			\Leftrightarrow 			&&	\max\{c(f^1_{|G_1}),  c(f^2_{|G_1})\} &\leq c(f^2_{|G_1}) &\\
			\Leftrightarrow 		&&		c(f^2_{|G_1})&\leq   c(f^2_{|G_1}), 
		\end{align*}
		which is a true statement.

		\item[] \underline{2. Case:} $c(f^2_{|G_1})< c(f^1_{|G_1})$\newline 
		To prove the statement $\min \{c(\boldsymbol{f_a}), c(\boldsymbol{f_b}) \}\leq c(\boldsymbol{f})$, it is sufficient to prove the statement $c(\boldsymbol{f_a})\leq c(\boldsymbol{f})$.
		For this case, the cost of the robust flow $\boldsymbol{f_a}=(f^1, \hat{f})$ is determined by flow $f^1$ as shown by the following
		\begin{align*}
		c(\hat{f})
		=c(\hat{f}_{|G_1})+c(\hat{f}_{|G_2})
		\leq \max\{c(f^1_{|G_1}), c(f^2_{|G_1})\}+c(f^1_{|G_2})
		= c(f^1_{|G_1}) + c(f^1_{|G_2})
		=c(f^1).
		\end{align*}
		Accordingly, equivalent transformation results in
		\begin{align*}
		&&c(\boldsymbol{f_a})				&\leq c(\boldsymbol{f})&\\
		\Leftrightarrow && \max \{ c(f^1), c(\hat{f})\} 	&\leq \max \{ c(f^1), c(f^2)\}&\\
		\Leftrightarrow &&c(f^1) 	&\leq  c(f^2),
		\end{align*}
		which is a true statement for the present case. 
	\end{itemize}
	In summary, by redirecting the scenario flows of the optimal $\boldsymbol{b}$-flow $\boldsymbol{f}$ we obtain the desired property without changing the cost.
\end{proof}

\BellmanOptikriteriumSP*
\begin{proof}
	Without loss of generality, we assume that the robust flows given in this proof satisfy the property of Lemma~\ref{lem:CostSPGraphs}.
	Let $\boldsymbol{f}=(f^1,f^2)$ be an optimal robust flow for instance $\mathcal{I}$. 
	If we restrict flow $\boldsymbol{f}$ to subgraphs $G_1$ and $G_2$, feasible flows $\boldsymbol{f}_{|G_1}= (f^1_{|G_1}, f^2_{|G_1}) $ and $\boldsymbol{f}_{|G_2}= (f^1_{|G_2}, f^2_{|G_2}) $ result for instances $\mathcal{I}_1$ and $\mathcal{I}_2$, respectively. Furthermore, they still satisfy the property of Lemma~\ref{lem:CostSPGraphs}.
	Assume that flow $\boldsymbol{f}_{|G_1}$ is not optimal for instance $\mathcal{I}_1$. 
	Consequently, there exists an optimal robust flow $\boldsymbol{\tilde{f}}=(\tilde{f}^1, \tilde{f}^2)$ in subgraph $G_1$ with less cost, i.e.,
	$$c(\boldsymbol{\tilde{f}}) = \max \{c(\tilde{f}^1), c(\tilde{f}^2)\} =c(\tilde{f}^2) < c(f^2_{|G_1}) = \max \{ c(f^1_{|G_1})  ,  c(f^2_{|G_1}) \}  = c(\boldsymbol{f}_{|G_1}).$$
	However, this means that the composed flows $\hat{f}^1:=\tilde{f}^1+ f^1_{|G_2}$ and $\hat{f}^2:=\tilde{f}^2+ f^2_{|G_2}$ result in a feasible robust flow $\boldsymbol{\hat{f}}= (\hat{f}^1, \hat{f}^2)$ with cost 
	\begin{align*}
	c(\boldsymbol{\hat{f}})		&= \max \{ c(\hat{f}^1) , c(\hat{f}^2)  \} \\ 
	&= \max \{  c(\tilde{f}^1) + c(f^1_{|G_2}), c(\tilde{f}^2) + c(f^2_{|G_2})   \} \\
	&= c(\tilde{f}^2) + c(f^2_{|G_2})\\
	&< c(f^2_{|G_1}) + c(f^2_{|G_2}) =c(f^2)= c(\boldsymbol{f}),
	\end{align*} 
	which contradicts to the assumption. 
	The optimality of flow $\boldsymbol{f}_{|G_2}= (f^1_{|G_2}, f^2_{|G_2})$ follows for instance $\mathcal{I}_2$ due to the analog argumentation. 
	
	Conversely, let $\boldsymbol{f}_{|G_1}$ and $\boldsymbol{f}_{|G_2}$ be optimal flows for instances $\mathcal{I}_1$ and $\mathcal{I}_2$, respectively. 
	The composition of these flows results in a feasible robust flow $\boldsymbol{f}:=\boldsymbol{f}_{|G_1}+ \boldsymbol{f}_{|G_2}$ for instance $\mathcal{I}$ that causes cost of
	\begin{align*}
	c(\boldsymbol{f})	&= \max \{ c(f^1) , c(f^2)  \} \\
	&= \max \{ c(f^1_{|G_1}) + c(f^1_{|G_2} ),    c(f^2_{|G_1}) + c(f^2_{|G_2} )   \} \\
	&=  c(f^2_{|G_1}) + c(f^2_{|G_2} )\\
	&=  c(f^2).
	\end{align*}
	Assume the robust flow $\boldsymbol{f}$ is not optimal which in turn means that there exists an optimal robust flow $\boldsymbol{\tilde{f}}$ with less cost, i.e. $c(\boldsymbol{\tilde{f}})<c(\boldsymbol{f})$.
	As flows $\boldsymbol{f}_{|G_1}$ and $\boldsymbol{f}_{|G_2}$ are optimal for instances $\mathcal{I}_1$ and $\mathcal{I}_2$, respectively, $c(f^2_{|G_i}) \leq  c(\tilde{f}^2_{|G_i})$ holds true for both subgraphs $G_i$, $i\in \{1,2\}$. 
	Overall, we obtain 
	\begin{align*}
	c(\boldsymbol{f})	= c(f^2) = c(f^2_{|G_1}) + c(f^2_{|G_2} )  \leq  c(\tilde{f}^2_{|G_1}) + c(\tilde{f}^2_{|G_2} )  =  c(\tilde{f}^2)  = c(\boldsymbol{\tilde{f}}),
	\end{align*}
	which is a contradiction to the assumption.
\end{proof}

\begin{example}\label{exmpl:SPBellmannsPrinciple}
	\begin{figure}
			\begin{adjustbox}{max width=1\textwidth, max height=1\textheight}
			\centering\includegraphics{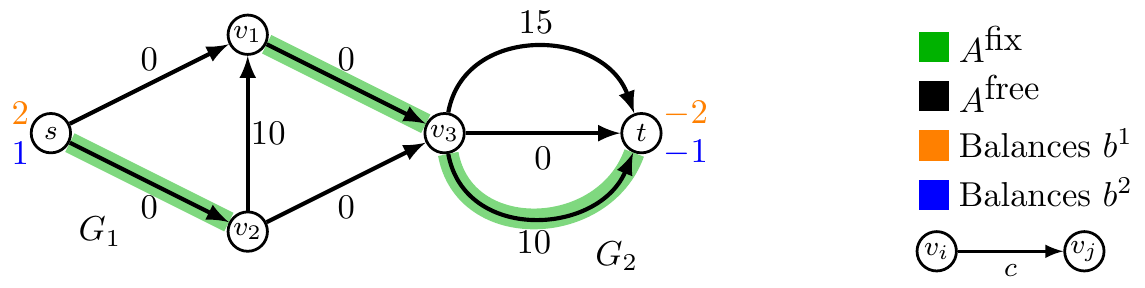}
		\end{adjustbox}
		\caption{An optimal solution in subgraph $G_1$ composed with an optimal solution in subgraph $G_2$ is not optimal in digraph $G$ composed by digraphs $G_1$ and $G_2$}
		\label{fig:BellOptiPrinzipSPGraphen}
	\end{figure}
	For a set of two scenarios $\Lambda=\{1,2\}$, let a network $(G,u,c,\boldsymbol{b})$ with capacity $u\equiv1$ be given where digraph $G$, its cost $c$, and the non-zero balances $\boldsymbol{b}$ are visualized in Figure~\ref{fig:BellOptiPrinzipSPGraphen}.
	An optimal solution $\boldsymbol{f}=(f^1, f^2)$ to the \ProblemName{} problem can be easily established.
	Considering the second scenario flow $f^2$ first, the only option to send two flow units from source $s$ to sink $t$ is along paths $sv_1v_3t$ and $sv_2v_3t$ due to the capacity constraints. 
	As the second scenario flow $f^2$ uses both fixed arcs in subgraph $G_1$, the first scenario flow $f^1$ must also send flow along these arcs. For this reason, the only option to send one flow unit from source $s$ to sink $t$ is along the path $sv_2v_1v_3t$.
	Concentrating on subgraph $G_1$, flow $f^1$ causes cost of ten while flow $f^2$ does not cause any cost. 
	Since the overall aim is to construct a robust $\boldsymbol{b}$-flow with minimum cost, flow $f^1$ sends the flow unit via the second parallel arc of multi-arc $(v_3,t)$ in subgraph $G_2$ causing zero cost. Flow $f^2$ also sends one flow unit via this arc, and additionally one flow unit via the first parallel arc of multi-arc $(v_3,t)$ causing cost of $15$. 
	In total, we obtain cost of
	$$
	c(\boldsymbol{f})
	= \max \{ c(f^1), c(f^2)\} 
	= \max \{ 10 + 0, 0+15\}
	=15
	=c(f^2).
	$$
	Due to the construction of digraph $G$, sending flow along paths from source $s$ to sink $t$ requires the usage of vertex $v_3$ which connects the subgraphs $G_1$ and $G_2$. 
	For this reason, we consider in the next step the \ProblemName{} problem on the subgraphs $G_1$ and $G_2$ separately. 
	Therefore, let $\mathcal{I}_1=(G_1,u,c,\boldsymbol{\tilde{b}})$ be the \ProblemName{} instance restricted to subgraph $G_1$ with newly defined balances by
	\begin{align*}
	\boldsymbol{\tilde{b}}(v)=
	\begin{cases}
	\boldsymbol{b}(v) \text{ for all }v\in V(G_1)\setminus\{v_3\},\\
	\boldsymbol{b}(t) \text{ for }v=v_3.
	\end{cases}
	\end{align*} 
	An optimal solution $\boldsymbol{\tilde{f}}=(\tilde{f}^1, \tilde{f}^2)$ to instance $\mathcal{I}_1$ is equal to solution $\boldsymbol{f}$ restricted to subgraph $G_1$, and causes cost of 
	$$
	c(\boldsymbol{\tilde{f}})
	= \max \{ c(\tilde{f}^1), c(\tilde{f}^2)\} 
	= \max \{ 10, 0\}
	=10.
	$$
	Further, let $\mathcal{I}_2=(G_2,u,c,\boldsymbol{\hat{b}})$ be the \ProblemName{} instance restricted to subgraph $G_2$ with balances $\boldsymbol{\hat{b}}(v_3)=\boldsymbol{b}(s)$ and $\boldsymbol{\hat{b}}(t)=\boldsymbol{b}(t)$. 
	An optimal solution $\boldsymbol{\hat{f}}=(\hat{f}^1, \hat{f}^2)$ to instance $\mathcal{I}_2$ is determined as follows.
	Both scenario flows $\hat{f}^1$ and $\hat{f}^2$ send one flow unit along the third parallel arc of multi-arc $(v_3,t)$ while the second scenario flow $\hat{f}^2$ additionally sends one flow unit along the second parallel arc. 
	This ends up in cost of
	$$
	c(\boldsymbol{\hat{f}})
	= \max \{ c(\hat{f}^1), c(\hat{f}^2)\} 
	= \max \{ 10, 10+0\}
	=10.
	$$
	Consequently, the optimal solution $\boldsymbol{\hat{f}}$ in subgraph $G_2$ causes less cost than the optimal solution $\boldsymbol{f}$ in digraph $G$ restricted to subgraph $G_2$ which causes cost of $15$. 
	
	Conversely, the solution, which results if optimal solutions $\boldsymbol{\tilde{f}}$ and $\boldsymbol{\hat{f}}$ to instances $\mathcal{I}_1$ and $\mathcal{I}_2$ are composed, is feasible but not optimal for instance $\mathcal{I}$. 
\end{example}

\end{document}